\title[]{Global regularity for critical SQG in bounded domains}
\author{Peter Constantin}
\address{Department of Mathematics, Princeton University, Princeton, NJ 08544}
\email{const@math.princeton.edu}
\author{Mihaela Ignatova}
\address{Department of Mathematics, Temple University, Philadelphia, PA 19122}
\email{ignatova@temple.edu}
\author{Quoc-Hung Nguyen}
\address{Academy of Mathematics and Systems Science - Chinese Academy of Sciences}
\email{qhnguyen@amss.ac.cn}
\newcommand{\vp}{\varphi}
\newcommand{\pa}{\partial}
\newcommand{\la}{\label}
\newcommand{\fr}{\frac}
\newcommand{\na}{\nabla}
\newcommand{\be}{\begin{equation}}
	\newcommand{\ee}{\end{equation}}
\newcommand{\ba}{\begin{array}{l}}
	\newcommand{\ea}{\end{array}}
\newcommand{\Rr}{{\mathbb R}}
\newtheorem{thm}{Theorem}
\newtheorem{prop}{Proposition}
\newtheorem{lemma}{Lemma}
\newtheorem{coro}{Corollary}
\newtheorem{rem}{Remark}
\newcommand{\beg}{\begin}
\newcommand{\ov}{\overline}
\newcommand{\D}{\Delta_D}
\renewcommand{\l}{\Lambda_D}
\date{today}
\begin{document}
	\begin{abstract}
	We prove the existence and uniqueness of global smooth solutions of the critical dissipative SQG equation in bounded domains in $\mathbb R^2$. This solves an open problem. We introduce a new methodology of transforming  the single nonlocal nonlinear evolution equation in a bounded domain into an interacting system of extended nonlocal nonlinear evolution equations in the whole space. The proof then uses the method of the nonlinear maximum principle for nonlocal operators in the extended system.
	\end{abstract}
	
	\keywords{SQG, global regularity,  nonlocal, nonlinear maximum principle, estimates near the boundary, bounded domains}
	
	\noindent\thanks{\em{ MSC Classification:  35Q35, 35Q86.}}
	
	\maketitle
	\section{Introduction}

	The Surface Quasigeostrophic equation (SQG) of geophysical origin (\cite{held}) 
	was proposed as a two dimensional model for the study of inviscid incompressible
	formation of singularities (\cite{c}, \cite{cmt}). The equation has been studied extensively. Blow up from smooth initial data is still an open problem, although  the original blow-up scenario of \cite{cmt} has been ruled out analytically (\cite{cord}) and numerically (\cite{cnum}). The addition of fractional dissipation yields globally regular solutions if the power of the Laplacian is larger or equal than one half. When  the linear dissipative operator is precisely the square root of the Laplacian, the equation is commonly referred to as the ``critical dissipative SQG'', or ``critical SQG''. The global regularity of solutions for critical SQG in the whole space or on the torus was obtained independently in \cite{caf} and \cite{knv} by very different methods.  Several subsequent proofs were obtained (see \cite{cv1}, \cite{cvt} and references therein). 
	
	 The basic ingredients used in \cite{cv1} are specific nonlinear maximum principle lower bounds for  $\Lambda = \sqrt{-\Delta}$, the square root of the Laplacian in the whole space $\Rr^d$.  A typical example is
\be
D(f) =f\Lambda f- \fr{1}{2}\Lambda\left({f^2}\right) \ge c\left(\|\theta\|_{L^{\infty}}\right)^{-1} {f^3}
\la{corcorv}
\ee
pointwise, for $f=\pa_i\theta$ a component of the gradient of a bounded function $\theta$. 
This is a useful cubic lower bound for a quadratic expression, when  $\|\theta\|_{L^{\infty}}\le\|\theta_0\|_{L^{\infty}}$ is known to be bounded above. The critical SQG equation in $\Rr^2$ is 
\be
\pa_t \theta + u\cdot\na\theta + \Lambda\theta = 0
\la{critsqg}
\ee
where
\be
u = \na^{\perp}\Lambda^{-1}\theta  = R^{\perp}{\theta}
\la{ucr}
\ee
and $\na^{\perp} = (-\pa_2, \pa_1)$. The equation has a weak maximum principle, the $L^{\infty}$ norm of $\theta$ does not grow in time. 
	In \cite{cv1} and \cite{cvt}, instead of estimating directly gradients, the proof of global regularity proceeds by estimating finite difference quotients, with the aim of first obtaining bounds for $C^{\alpha}$ norms. A basic feature of the critical SQG equation in the whole space is the fact that once the solution is bounded in $C^{\alpha}$, for some $\alpha>0$, then it follows that the solution is in fact $C^{\infty}$ smooth. More generally, if a generalized SQG equation has a dissipation of order $s$, i.e., $\Lambda$ is replaced by $\Lambda^s$ with $0<s\le 1$, then if $\theta$ is bounded in $C^{\alpha}$ with $\alpha>1-s$, then the solution is smooth (\cite{cw}). This condition is sharp in the class of general linear advection diffusion equations, (\cite{svz}). In \cite{cvt}, the smallness of $\alpha$ is crucially used to show that the nonlinear term  appearing in the evolution of the finite difference quotient $q=D_h^{\alpha}\theta$  of solutions of  \eqref{critsqg} is entirely dominated by the term corresponding to $D(q)$. This is no longer the case in bounded domains.

	The critical SQG equation in bounded domains is given by
	
	\be
	\pa_t \theta + u\cdot\na\theta + \l \theta = 0
	\la{sqg}
	\ee
	with
	\be
	u = \na^{\perp}\l^{-1}\theta.
	\la{u}
	\ee
	Here $\Omega\subset \Rr^d$ is a bounded open set with 	smooth oriented boundary, $\l$ is the square root of the Laplacian with vanishing Dirichlet boundary conditions, and $\na^{\perp} = J\na$ with $J$ an invertible antisymmetric matrix.  
	
The problem of global regularity of critical SQG in bounded domains was open until the present work. Interior regularity
was investigated in \cite{sqgb}. The approach, initiated in \cite{ci}, was based on bounds on the heat kernel. One of the main obstacles to implementing a proof of regularity in bounded domains is the lack of translation invariance. It has as consequence the fact that the Riesz transforms $R_D= \na\l^{-1}$ are not spectral operators, i.e., they do not commute with functions of the Dirichlet  Laplacian. In \cite{sqgb} the method of the nonlinear maximum principle was used in conjunction with estimates for the  commutator between difference quotient operators and  $\l$. These estimates degenerate at the boundary but they can be used to obtain a priori global in time  interior Lipschitz bounds of solutions. A construction of solutions with this degree of regularity was done in \cite{igsqg}. Global weak solutions in bounded domains were studied in \cite{cn, cn1}.
In \cite{sqgzapde}, necessary and sufficient conditions for global $C^{\alpha}$ bounds up to the boundary with $\alpha < 1-\fr{d}{p}$ were given in terms of quantitative information on supercritical ($p>d$) $L^p$ norms of $\fr{\theta}{w_1}$, where $w_1$ is the first eigenfunction of the Dirichlet Laplacian. Because $w_1$ vanishes linearly at the boundary, this implies that global $C^{\alpha}$  bounds are available if and only if solutions have a Holder rate of vanishing of $\theta$ at $\pa\Omega$. 

The work
\cite{vas} presented $C^{\alpha}$ bounds of weak solutions, using an approach based on the method of De Giorgi, employed first in the whole space in \cite{caf}. However, unlike the case of the whole space, going from $C^{\alpha}$ to higher regularity does not follow using this approach. Global Holder continuous solutions were not known to be unique, nor smooth. In this work we prove the existence and uniqueness of global smooth solutions. In order to obtain this result we introduce a new methodology consisting of the extension of the single equation in the bounded domain to an interacting system of equations in the whole space. We then employ the method of the nonlinear maximum principle in the analysis of the extended nonlinear nonlocal system.

\subsection{ Main Results and Description of Ideas of Proofs}

	In this paper we prove 
	\beg{thm}\la{globreg} Let $\Omega\subset \Rr^2$ be a bounded domain with smooth boundary. Let $\theta_0\in H_0^1(\Omega)\cap H^m(\Omega)$, $m> 2.5$, and let $T>0$. Then there exists a unique solution of \eqref{sqg}, \eqref{u} with initial data $\theta_0$ and which belongs to $L^{\infty}(0,T; H_0^1(\Omega)\cap H^m	{{(\Omega))}}$.
	\end{thm}
	The solution is in fact smooth for all time and eventually exponentially decays to zero. The initial data need not be smooth. By parabolic regularization, if the initial data are $C^{\alpha_0}$  for some $\alpha_0>0$ and vanish at the boundary, then the solution exists locally, is unique, becomes instantly smooth (Lemma \ref{keyle}), persists globally, and decays (Theorem \ref{calphaglob} and Remark \ref{highgre2}). 

	The main result we prove is a priori bound, on which the proof of Theorem \ref{globreg} rests. This is
	\beg{thm}\la{apriori} Let $\theta\in C^{1+\alpha_0}([0,T]\times\Omega)$ be a classical solution of \eqref{sqg}, \eqref{u} for some $\alpha_0\in (0,1)$. There exists a small constant $\delta$ depending {{only}} on $T$ and the domain $\Omega$, such that, for 	{{$0<\alpha<\alpha_0$}} satisfying
	\be
	 \alpha(\|\theta_0\|_{L^{\infty}(\Omega)} +1) \le \delta,
	 \la{cond}
	 \ee
there exists a constant $C_{\alpha}$ depending  (continuously and explicitly) only on $\|\theta_0\|_{C^{\alpha}(\Omega)}$, $T$, the domain $\Omega$ and $\alpha$, such that
\be
\sup_{0\le t\le T}\|\theta(\cdot, t)\|_{C^{\alpha}( \Omega)} + \int_0^T\|	{{\theta(\cdot, t)}}\|_{C^{1 +\fr{\alpha}{2}}(\Omega)}dt \le C_{\alpha}.
\la{apriorib}
\ee 
	\end{thm}
		The detailed result is given in Theorem \ref{holderR}. The factor $\fr{1}{2}$ is not structural, it is there only to signify that the gain of regularity is less than 1, but regularity above $L^1(dt; C^1(\Omega))$ is attained. Once this result is obtained, Theorem \ref{globreg} follows from the  local existence and uniqueness of smooth solutions of (\ref{sqg}) given in \cite{sqgb} and a natural continuation result. More precisely, the local existence theorem  is
	\beg{thm}\la{locex} Let $\Omega$ be a bounded open domain with smooth boundary in $\Rr^2$. Let $m\ge 2$ and let $\theta_0\in H_0^1(\Omega)\cap H^{m}(\Omega)$.  There exists a time $T_0>0$ and a unique solution $\theta$ of \eqref{sqg}  satisfying
	\be
	\theta \in L^{\infty}(0, T_0; 	{{H_0^1(\Omega)}}\cap H^m(\Omega)) \cap L^2(0, T_0; H^{m+\fr{1}{2}}(\Omega)).
	\la{locb}
	\ee
	The time $T_0$ depends on the initial norm in $H^2(\Omega)$.
	\end{thm}
This result was proved in \cite{sqgb} for $m=2$ using Galerkin approximations \[\theta_N = \sum_{j=1}^N c_j(t) w_j(x),\] Sobolev energy bounds and Sobolev embedding.
The fact that the expansion is in terms of eigenfunctions $w_j$ of the Dirichlet Laplacian allows integration by parts because powers of the  fractional Laplacian applied to the Galerkin approximation vanish at the boundary, $\l^s \theta_N{_{\left |\right.\pa\Omega}}=0$. The general $m$ case follows in the same manner.

The local existence result is combined with the following natural continuation result.
\beg{thm}\la{cont}Let $\theta_0\in H_0^1(\Omega)\cap H^m(\Omega)$, $m\ge 2$ be given and let $\theta\in L^{\infty}([0, T_0), H^{m}(\Omega))$ be a solution of \eqref{sqg},\eqref{u}. Assume that for some  $0<\beta<1$ there exists a constant $C_{\beta}$ such that
\be
\int_0^{T_0}\|\theta(\cdot, t)\|_{C^{1+\beta}(\Omega)} dt \le C_{\beta}
\la{gradcond}
\ee
holds. Then there exists a constant $C_m$ depending (continuously and explicitly) only on $\|\theta_0\|_{H^m(\Omega)}$,
the domain $\Omega$ and $C_\beta$, such that
\be
\sup_{0\le t\le T_0}\|\theta(\cdot, t)\|_{H^m(\Omega)}\le C_m.
\la{hmb}
\ee
\end{thm}
Combined with the local existence result, this implies that the solution can be uniquely extended beyond $T_0$.
The proof of Theorem \ref{cont}  is based on energy estimates and well-known facts about the boundedness of Riesz transforms in $C^r(\Omega)$ \cite{cabre}.  The condition \eqref{gradcond} is sufficient for uniqueness and persistence of smoothness of solutions in inviscid SQG as well. A detailed proof is left for the interested reader. In this paper we provide an independent local existence and persistence proof directly based on $C^r$ spaces, without use of Sobolev spaces.\\

The proof of Theorem \ref{apriori} requires the introduction of a number of new elements which we believe are of general interest. As in \cite{sqgb} we use functional calculus to represent the square root $\l$ of the Dirichlet Laplacian in terms of the heat kernel, but unlike in \cite{sqgb}, a direct commutator between finite difference quotients and $\l$ is not attempted. We consider instead an appropriate cover of $\overline {\Omega}$ with open balls and smooth subordinated localizers $\chi$. We associate to the balls corresponding to the boundary $\pa\Omega$ smooth diffeomorphisms $Y: B\cap\Omega \to \mathbb R^2_{+}$ which flatten the boundary.  For interior balls the diffeomorphisms are just the identity.  We consider then maps $\mathcal F$ which take functions $\chi g$ defined in patches  $B\cap \overline{\Omega}$ to functions defined in the whole space $\Rr^2$ by  $\mathcal F(\chi g) = \mathcal O(\chi g\circ Y)$, where $\mathcal O$ is odd extension across the boundary of the half space. While localization and flattening of the boundary is a familiar procedure for proving regularity of elliptic and parabolic equations in bounded domains, our approach requires to extend also the localized equation. This is needed because, unlike the case of local PDE, in the nonlocal case it is difficult to disentangle tangential directions from the normal direction in the principal symbol of the equation. Thus, after the localization and change of variables, the Dirichlet Laplacian is conjugated to (or intertwined with) a second order elliptic operator $L$  with Lipschitz coefficients, defined in the whole space,  plus an error.  The change of variables $Y$ is defined carefully so that
the cross terms involving normal and tangential derivatives vanish near the boundary, see Appendix 1. This allows $L$ to have Lipschitz coefficients. We take advantage of the fact that the heat equation is local, and therefore when we commute a smooth cutoff function $\chi$ with $e^{t\D}$ we obtain a local error, which we represent in terms of the heat operator using the Duhamel formula. We then apply $\mathcal F$ and the functional representations of $\l$ and of $L^{\fr{1}{2}}$ in terms of their respective semigroups  to obtain an expression for the intertwining of the localized $\l$ with the corresponding $L^{\fr{1}{2}}$,
\be
\mathcal F(\chi\l\theta) - L^{\fr{1}{2}}\mathcal F(\chi\theta) = \mathbf{R_\chi}(\theta)
\la{commulr}
\ee
and show (Proposition \ref{lerrep}) that
\be
\|\mathbf R_{\chi}(\theta)\|_{C^r(\Rr^2)}  \lesssim \|\theta\|_{C^r(\Omega)}
\la{Rbddr}
\ee
holds for $0<r<1$.  
We localize and extend the nonlinear term $\left(\na^{\perp}\l^{-1}\theta \right)\cdot \na \theta = \{\l^{-1}\theta, \theta\}$. It is only here that we use the fact that we are in two dimensions. We use properties of the Poisson bracket which allow odd extension across the flattened boundary after composition with $Y$, while maintaining almost intact the Poisson structure (Proposition \ref{Fnon}).

We arrive thus at a representation of the equation \eqref{sqg}, \eqref{u} as a coupled system of equations in the whole space. This constitutes a new methodology to study boundary value problems which we expect to be more  broadly useful. Corresponding to the cover of $\overline\Omega$ with balls, we have $N$ transformations $\mathcal F$ (some of them not requiring changes of variables), and for each patch $B_i\cap \Omega$, $1\le i\le N$, functions
$\theta_i = \mathcal F(\chi_i \theta)$ which obey equation in the whole space
\be
\pa_t \theta_i + u_i\cdot\na\theta_i + L^{\fr{1}{2}}\theta_i = f_i
\la{sqgi}
\ee
The operators $L$ depend on the patch but they have the same second order elliptic, Lipschitz coefficients nature. The velocities $u_i$ depend on the whole $\theta$, not only on  $\theta_i$, but the dependence is quasi-local, meaning that the $u_i = \na^{\perp}\widetilde{L}^{-\fr{1}{2}}(\widetilde{\theta_i}) + \text{error}$ where $\widetilde L$ is like $L$ and $\widetilde{\theta_i}$ covers $\theta_i$, i.e.
$\theta_i = \eta\widetilde{\theta_i}$ with $\eta$ Lipschitz and compactly supported. The "forces" $f_i$ arise from errors of intertwining and extension and depend in a nonlinear manner on $\theta$. The operators $L^{\fr{1}{2}}$ have variable coefficients. The equations \eqref{sqgi} are not stand alone equations, rather they are representations of localizations and extensions of \eqref{sqg}, \eqref{u}. Nevertheless they serve the purpose to estimate derivatives of $\theta$. The system of equations \eqref{sqgi} is sparse (only few nearby patches interact) but it is not treated as an algebraically coupled system, more like a redundantly oversampled contact system.

We apply and extend the method of nonlinear maximum principle to the aggregate \eqref{sqgi}, taking great advantage of an enhanced nonlinear lower bound. The fact that
the operators have variable coefficients, not unexpectedly implies that inequalities for the evolving  $C^{\alpha}$ norms for small $\alpha$ cannot be closed, as they are driven by norms of full derivatives of $\theta$. The nonlinear maximum principle provides though powerful nonlinear damping. When trying to estimate the $C^{\alpha}$ norm, the most dangerous term still comes from the finite difference quotient of the active scalar's velocity, as in the case of critical SQG in the whole space, and  bounding it still requires the use of the $D(q)$ argument.   Like in previous works using the method of the nonlinear maximum principle, in the present work we also have only one small parameter, namely $\alpha$. We consider the evolution of the difference quotient $q =D^{\alpha}_h\theta_i$ in each patch.
In previous works \cite{cvt, sqgb} the smallness of $\alpha$ was used to overcome the contribution of the difference quotient of the active scalar velocity, $D_h^1 u$, by crucially using $D(q)$ in a pointwise manner, and also by using a nonlinear lower bound
\be
|h|^{-2\alpha}D(q)\ge c |h|^{-1 + \alpha} q^3\|\theta\|_{L^{\infty}}^{-1}
\la{qcube}
\ee
in the evolution of $q^2$.  In this work we use the same idea to overcome the contribution of the inner core of $D^1_h u_i$. In addition we use the observation that 
 at the point of maximum of $q$, because $\theta$ is a priori bounded, $|h|$ must be very small, less than $\left(\fr{|q|}{2\|\theta\|_{L^{\infty}}}\right)^{-\fr{1}{\alpha}}$. Thus, the term $D(q)$ provides a nonlinearly enhanced damping with  an excess of order $\fr{1}{\alpha}$,
\be
|h|^{-2\alpha}D(q) \ge c q^{2+\fr{1}{\alpha}}\|\theta\|_{L^{\infty}}^{-\fr{1}{\alpha}},
\la{dqenhanced}
\ee
resulting in a differential inequality for the maximum of the type
\be
\pa_t q +c \|\theta_0\|_{L^{\infty}}^{-\fr{1}{\alpha}} q^{1+ \fr{1}{\alpha}}\le \text{translation and localization errors},
\la{typeq}
\ee 
and the smaller $\alpha$ is, the larger this useful excess is. The error terms due to the localization and the absence of translation invariance are controlled by this high homogeneity of the nonlinear damping. Thus, the smallness of $\alpha$ is used in two ways, once by bounding the worst term by part of $D(q)$, and the other, by affording high homogeneity nonlinear error terms using the excess damping of homogeneity  $\alpha^{-1}$ provided also by $D(q)$. The upshot, described in Lemma \ref{drivencalpha}, is an a priori bound of the supremum in time of the $C^{\alpha}$ norm which is driven by the time integral of the $C^{1+\fr{\alpha}{2}}$ norm. Here the factor $\fr{1}{2}$ is not part of the structure of the equation, it only represents the crucially important fact that less than a whole derivative is lost. The loss of almost a whole derivative is however unavoidable. This loss marks the difference between translation invariant and non-translation invariant equations, and it occurs even if we replace in the usual SQG equation in the whole space, the linear dissipation  $\sqrt{-\Delta}$ by the linear dissipation $a(x)\sqrt{-\Delta}$, where $a$ is a uniformly bounded positive smooth function.

In order to close the estimates we employ a result about linear dissipative advection equations, of the type
\be
\pa_t v + b\cdot\na v  + L^{\fr{1}{2}}v = f
\la{veq}
\ee
with $b$ and $f$ Holder continuous, $b\in C^{\beta}(\Rr^d)$, $f\in C^{\beta}(\Rr^d)$ and initial data in $C^{\beta}(\Rr^d)$. We show (Lemma \ref{estidiffusion}) that the norm of $v$ in $L^1(0,T; C^{1+\alpha}(\Rr^d))$ is bounded in terms of the norms of $b, f, v(0)$ in $C^{\beta}$ for $\alpha<\beta$.   
For the proof of this result we use a method of freezing coefficients, which rectifies the variable coefficient operator $b\cdot\na + L^{\fr{1}{2}}$, that is, it approximates it by its tangent at each frozen point $y$, the constant coefficients operator $b(y)\cdot\na_x + L_y^{\fr{1}{2}}$. This treatment requires a systematic study of the kernels of semigroups $e^{-tL}$, $e^{-tL^{\fr{1}{2}}}$ and their approximations. 
The linear result of Lemma \ref{estidiffusion} is applied to the specific nonlinear equation in Corollary \ref{corol} and is used in conjunction with the high homogeneity of the nonlinear damping to close the estimates and prove the main result, Theorem \ref{apriori}.

The paper is organized as follows. In Section \ref{prelims} we set up the cover of the domain, recall some basic facts about the Dirichlet Laplacian and introduce notation. Ins Section \ref{loex} we describe the procedure of localization and extension. We prove in this section bounds for the intertwining of the localized and extended heat semigroup $e^{t\D}$, and bounds for the intertwining of the localized and extended $\l$. We follow, in Section \ref{exlsqg} with the derivation of the extended localized system \eqref{sqgi} and provide bounds for the errors of nonlinear intertwining. Section \ref{bld} is devoted to proving the useful results on the linear dissipative advection equation \eqref{veq}. In Section \ref{nlmp} we apply the nonlinear maximum principle method to the system and obtain a priori bounds for Holder norms of solutions. The proof of Theorem \ref{apriori}, in its precise form, Theorem \ref{holderR} is given in Section \ref{hore}. A self-contained proof of local existence with Holder initial data and global persistence of smooth solutions is given in Section \ref{glcr}. Appendix 1 (Section \ref{change}) describes the change of variables $Y$ and Appendix 2 (Section \ref{kernelestimes}) provides useful estimates of heat kernels and approximations. 

	\section{Preliminaries}\la{prelims}
	We consider a bounded connected (but not necessarily simply connected) domain $\Omega\subset \Rr^d$ with smooth oriented boundary $\pa\Omega$.	We cover the boundary $\pa\Omega$ with open balls $B_i^0$, $i=1, \dots, N_1$, centered at points on the boundary, and take nested concentric open balls $B_i^0\subset B_i^1\subset B_i^2 \subset B_i^3$, such that the portion of the boundary of each $\pa\Omega\cap B_i^3$  is given after a translation and a rotation by the graph of smooth function with nearly constant gradient.
	We consider smooth cutoffs $\chi_i^{j}$, $i=1,\dots, N_1$, $j=0,1,2.$ such that $\chi_i^j$ is identically equal to $1$ on
	$\overline B_i^j\cap\overline \Omega$ and has compact support in $\overline{\Omega}\cap B_i^{j+1}$. Thus 
	$\chi_i^j = \chi_i^{j+1}\chi_i^j$. 
	The radius of the largest balls $B_i^3$ is denoted $r_0$  and is taken without loss of generality to be the same for all $i$.
	This radius is taken small enough such that, if the boundary $\pa\Omega$ has several connected components, the balls corresponding to one connected component of the boundary do not intersect the balls corresponding to another connected component of the boundary.
	We cover $\Omega\setminus \cup_{i=1}^{N_1}B_i^0$ with balls $B_i^{0}\subset \Omega$, with $i=N_1+1,\dots N$ and take nested  $B_i^0\subset B_i^1\subset B_i^2\subset B_i^3$ with $B_i^3\subset\Omega$, 
	and cutoffs $\chi_i^j$ with $j=0, 1,2$ which identically equal $1$ on  $B_i^j $ and are compactly supported in $B_i^{j+1}$.
	We refer to the balls with index $i\le N_1$ as boundary balls, and to the balls with $N_1<i\le N$ as interior balls.
	The set of balls and cutoffs is entirely based on the geometry of the domain, and is fixed throughout the paper. In each boundary ball we define diffeomorphisms
	\be
	Y_i : B_i^3\cap \Omega \to \Rr^{d}_+ 
	\la{Y_i}
	\ee
	$i=1,\dots N_1$ with certain properties. Without loss of generality we take the cutoffs $\chi_i^{j}$ to be such that
	$\chi_i^{j}\circ Y_1^{-1}$ have smooth even extensions across $y_d=0$.
	We associate to a smooth solution $\theta(x,t)$ of \eqref{sqg} defined for $x\in\Omega$ and $t\in[0,T]$ an array of functions
	\be
	\Theta(y,t) = (\widetilde\theta_i(y,t))_{i=1,\dots N} 
	\la{bigtheta}
	\ee
	defined on for $y\in\Rr^d$ and $t\in [0,T]$ in the following manner.
	For $i=1, \dots N_1$, we set
	\be
	\widetilde\theta_i = \mathcal O( (\chi_i^1\theta)\circ Y_i)
	\la{thetabpatch}
	\ee
	where $\mathcal O$ is odd extension across $y_{d}=0$. For $i=N_1+1, \dots N$, we put
	\be
	\widetilde \theta_i = \chi_i^1\theta
	\la{thetaipatch}
	\ee
	where we denote by the same letter $f$  the extension of a function $f$ that is compactly supported in $\Omega$ by setting it equal to 0 outside the support of $f$. Norms of $\Theta$ in space are equivalent to norms of $\theta$ in $\overline\Omega$. 
	
	We use in particular $C^r$ norms. We frequently use the interpolation inequality
	\be
	\|f\|_{C^\beta} \le \|f\|_{C^{\delta}}^a\|f\|_{C^{\gamma}}^{1-a}
	\la{interdeltagamma}
	\ee
	for $\beta = a \delta + (1-a)\gamma$ 	with $0<a<1$.

	The $L^2(\Omega)$-normalized eigenfunctions of the Dirichlet Laplacian $-\D$ are denoted $w_j$, and its eigenvalues counted with their multiplicities are denoted $\mu_j$: 
	\be
	-\D w_j = \mu_j w_j.
	\la{ef}
	\ee
	It is well known that $0<\mu_1\le...\le \mu_j\to \infty$  and that $-\D$ is a positive selfadjoint operator in $L^2(\Omega)$ with domain ${\mathcal{D}}\left(-\D\right) = H^2(\Omega)\cap H_0^1(\Omega)$.
	The ground state $w_1$ is positive and
	\be
	c_0d(x) \le w_1(x)\le C_0d(x)
	\la{phione}
	\ee
	holds for all $x\in\Omega$, where $c_0, \, C_0$ are positive constants depending on $\Omega$,  $d(x)$ is  the distance from $x$ to the boundary $\partial\Omega$. Functional calculus can be defined using the eigenfunction expansion. In particular
	\be
	\left(-\D\right)^{\beta}f = \sum_{j=1}^{\infty}\mu_j^{\beta} f_j w_j
	\la{funct}
	\ee
	with 
	\[
	f_j =\int_{\Omega}f(y)w_j(y)dy
	\]
	for $f\in{\mathcal{D}}\left(\left (-\D\right)^{\beta}\right) = \{f\left |\right. \; (\lambda_j^{\beta}f_j)\in \ell^2(\mathbb N)\}$.
	We denote by
	\be
	\l^s = \left(-\D\right)^{\fr{s}{2}}, 
	\la{lambdas}
	\ee
	the fractional powers of the Dirichlet Laplacian
	and with $\|f\|_{s,D}$ the norm in ${\mathcal{D}}\left (\l^s\right)$:
	\be
	\|f\|_{s,D}^2 = \sum_{j=1}^{\infty}\mu_j^{s}f_j^2.
	\la{norms}
	\ee
	It is well-known that
	\[
	{\mathcal{D}}\left( \l \right) = H_0^1(\Omega).
	\]
	Note that in view of the identity
	\be
	\lambda^{\fr{s}{2}} = c_{s}\int_0^{\infty}(1-e^{-t\lambda})t^{-1-\fr{s}{2}}dt,
	\la{lambdalpha}
	\ee
	with 
	\[
	1 = c_{s} \int_0^{\infty}(1-e^{-\tau})\tau^{-1-\fr{s}{2}}d\tau,
	\]
	valid for $0\le s <2$, we have the representation
	\be\
	\left(\left(\l\right)^{s}f\right)(x) = c_{s}\int_0^{\infty}\left[f(x)-e^{t\D}f(x)\right]t^{-1-\fr{s}{2}}dt
	\la{rep}
	\ee
	for $f\in{\mathcal{D}}\left(\left (-\l\right)^{s}\right)$.
	
	We use second order elliptic operators in divergence form
\begin{equation}\la{lop}
L=-\operatorname{div}_x(A(x)\na_x.)
\end{equation}
in $\mathbb{R}^d$, where $A$ is a symmetric matrix-valued function in $\mathbb{R}^d$ which satisfies
\begin{align}\label{z65a}
&A(x)\geq  c_1 I\quad\quad \forall x\in \mathbb{R}^d,\\&
||\na A||_{L^\infty}+||A||_{L^\infty}\leq c_2,\label{z65b}
\end{align}
with  {{ constants}} $c_1,c_2>0$.\\
We denote by  $H_L(x,y,t)$ the kernel of $\partial_t+L$ in $\mathbb{R}^d\times (0,\infty)$. When $A$ is a constant matrix it is well known that  
\begin{align}\label{z61}
H_L(x,x+z,t):=\frac{1}{\sqrt{\det A}(4\pi t)^{\frac{d}{2}}}\exp\left(-\frac{(A^{-1}z\cdot z)}{4t}\right)
\end{align}
where $A^{-1}$ is the inverse of matrix $A$ and $(\cdot )$ is the Euclidean scalar product in $\mathbb{R}^d$. 
We define  the square root $L^{\frac{1}{2}}$ of the operator $L$  by 
\begin{align}\label{z52}	L^{\frac{1}{2}}u(x)=c_0\int_{0}^{\infty}\int_{\mathbb{R}^d}H_L(x,x+z,s)(u(x)-u(x+z))dzs^{-\frac{3}{2}}ds,
\end{align}
with 
\begin{equation}\label{consc0}
c_0=\frac{1}{2\Gamma(\frac{1}{2})}.
\end{equation}
In particular, when  $A$ is a constant matrix,  we have 
\begin{equation}\label{z62}
L^{\frac{1}{2}}u(x)=	\frac{	\tilde{c}_0}{\sqrt{\det A}}\int_{\mathbb{R}^d}\frac{u(x)-u(x+z)}{(A^{-1}z\cdot z)^{\frac{d+1}{2}}}dz
\end{equation}
with 
\begin{equation}\label{z64}
\tilde{c}_0=\frac{1}{\pi^{\frac{d}{2}}}\frac{\Gamma(\frac{d+1}{2})}{\Gamma(\frac{1}{2})}.
\end{equation}
For each fixed $y\in \mathbb{R}^d$, we define 
\be
L_{y}=-\operatorname{div}_x(A(y)\na _x).
\la{Ly}
\ee
This is a constant coefficients second order elliptic operator. In view of \eqref{z61}, {{ the}} kernel of $\partial_t+ L_{y}$ is given by 
\begin{equation}\label{z47}
G_{A(y)}(z,t)=\frac{1}{\sqrt{\det A(y)}(4\pi t)^{\frac{d}{2}}}\exp\left(-\frac{(A(y)^{-1}z\cdot z)}{4t}\right),
\end{equation}
and, using \eqref{z62},  the square root of the operator $L_y$ is given by
\begin{align}
L_{y}^{\frac{1}{2}}u(x)=	\frac{	\tilde{c}_0}{\sqrt{\det A(y)}}\int_{\mathbb{R}^d}\frac{u(x)-u(x+z)}{(A(y)^{-1}z \cdot z)^{\frac{d+1}{2}}}dz.
\end{align}
We emphasize that $L_{y}^{\frac{1}{2}}u(x)\vert_{y=x}$ is not identical to $L^{\frac{1}{2}}u(x)$. However, $L_{y}^{\frac{1}{2}}u(x)\vert_{y=x}-	L^{\frac{1}{2}}u(x)$ is a  zero order operator, for which we provide bounds in  Lemma \ref{remindL1/2} of Appendix 2. We prove in Appendix 2,{{  Lemma \ref{heatke}}}  useful quantitative bounds for the difference of heat kernels $H_L(x,x+z,t)-G_{A(x+z)}(z,t)$.

\section{Localization and extension}\label{loex}
In this section, we take $d=2$. The localization and extension of the linear term can be done in any dimension. We use the Poisson structure of the nonlinearity, and there $d=2$ is important.\\
We consider a point on the boundary $x_0\in\pa\Omega = \Gamma$. Without loss of generality, after a translation and a rotation, $x_0=0$ and the domain $\Omega$ is given locally near $0$ as $\{x=(x_1,x_2)\left |\right.\, x_2>\varphi(x_1)\}$ where $\varphi(0) = \varphi'(0) =0$ and the function $\varphi:(-\ell,\ell)\to \Rr$ is smooth. By taking $\ell>0$ small enough, we make sure 
\be
|\varphi'(x_1)| \le \epsilon
\la{varphib}
\ee
where $\epsilon>0$ is a small nondimensional number at our disposal. We extend the function $\varphi$ to all of $\Rr$ so that
\eqref{varphib} holds globally, and moreover, we may assume that $\varphi'$ vanishes outside a compact. 
We consider the global change of variables $\mathbb R^2\to\mathbb R^2$,  $x\mapsto Y(x)=(Y_1(x),Y_2(x))\in C^\infty$  in which $Y_1$ is given in Appendix 1 by \eqref{Y1}, and
\be
Y_2(x) = x_2-\varphi(x_1).
\la{Yx}
\ee
From the construction of $Y_1$ in Appendix 1, we have 
\begin{equation}
||\na Y-I||_{L^\infty}\leq \frac{1}{4}
\end{equation}
and 
\begin{equation}
\na Y_1.\na Y_2=0~~\text{in a neighborhood of } \Gamma.
\la{vanishy1y2}
\end{equation}
We denote the inverse of $Y$ by $X$, $Y^{-1} = X$.\\
The map $Y$ maps the portion near $x_0=0$ of $\Omega$ corresponding to $|x_1| <\ell$ to an open subset of $y_2>0$, and the corresponding portion of the boundary $\pa\Omega$ to 
an open segment $\{ |y_1| <\ell', \, y_2=0\}$.  \\
As it is very well known, under the change of variables $y\mapsto x = X(y)$, the Laplacian becomes
\be
\mathbf{\widetilde a}\pa_i(\frac{\widetilde a_{ij}}{\mathbf{\widetilde a}} \pa_j(\psi\circ X))=(\Delta_x \psi)\circ X,
\la{delX}
\ee
where $\pa_i = \fr{\pa}{\pa y_i}$ and
\be
\widetilde a_{ij} =(\na_x Y_i\cdot\na_xY_j)\circ X, ~~\mathbf{\widetilde a}=(\det\na Y) \circ {{X.}}\la{aij}
\ee
In view of \eqref{vanishy1y2} we have
\begin{align}
\widetilde a_{12}=0~~~~\text{in a neighborhood of } \{y_2=0\}.
\la{vanish12}
\end{align}
We consider functions $g(x)$ defined in $\Omega$ and a cutoffs $\chi\in C_0^{\infty}(\Rr^2\cap{\overline\Omega})$ with support included in $((-{\ell}, \ell )\times \Rr) \cap \overline{\Omega}$. Then functions $\chi g$ can be composed with $X$ and define functions compactly supported in $y_2\ge 0$ near $0$. If $g_{\left | \right.\pa\Omega} =0$, these functions vanish at $y_2=0$. We consider odd and even extensions  of functions $f$ defined on $\Rr^2_{+}$, 
\be
\mathcal O f(y_1,y_2) = \left\{
\ba
f(y_1,y_2), \;\;\quad \quad \text{for}\; y_2>0,\\
-f(y_1,-y_2), \quad \text{for}\; y_2<0,
\ea
\right.
\la{Of}
\ee
and 
\be
\mathcal E f(y_1,y_2) = \left\{
\ba
f(y_1,y_2), \;\;\quad \text{for}\; y_2>0,\\
f(y_1, -y_2), \quad \text{for}\; y_2<0.
\ea
\right.
\la{Ef}
\ee
Because 
\be
\ba
\pa_1\mathcal O f = \mathcal O\pa_1f,\; \text{if}\;  f\in C^1(\ov\Rr^2_{+}),\\
\pa_1\mathcal Ef = \mathcal E \pa_1 f, \; {\text{if}}\; f\in C^1(\ov\Rr^2_{+}),  \\
\pa_2\mathcal E f = \mathcal O\pa_2 f,\; {\text{if}}\; f\in C^1(\ov\Rr^2_{+}),    \\
\pa_2\mathcal O f = \mathcal E\pa_2f,\; {\text{if}}\; f\in C_0^1(\ov\Rr^2_{+}),
\ea
\la{comuodeven}
\ee
and the product rules
\be
\ba 
\mathcal O(fg) = \mathcal O(f)\mathcal E(g) = \mathcal E(f)\mathcal O(g),\\
\mathcal E(fg) = \mathcal E(f)\mathcal E(g) = \mathcal O(f)\mathcal O(g).
\ea
\la{prouodeven}
\ee
In view of \eqref{delX}, it follows that for function $\chi g\circ X$ we have
\be
\mathcal O(\Delta_x(\chi g) \circ X) = \mathbf{a} \pa_i( \frac{a_{ij}}{\mathbf{a}} \pa_j(\mathcal O((\chi g)\circ X))
\la{Odeltachi}
\ee
where we denote
\be
\mathbf{a} = \mathcal E\left(\mathbf {\widetilde a}\right),
\la{awa}
\ee
\be
a_{ii} = \mathcal E( \widetilde{a}_{ii}), \quad i=1,2,
\la{a_ii}
\ee
and
\be
a_{12} = \mathcal O\left( {\widetilde a}_{12}\right).
\la{a12}
\ee
We denote by $L,B$ the operators
\be
L f = -\pa_i(a_{ij}\pa_j f), ~~Bf=\frac{a_{ij}}{\mathbf{a}} \partial_i\mathbf{a}\pa_j f
\la{Lf}
\ee
viewed as  operators defined in $\Rr^2$ (for instance on functions $f\in H^2(\Rr^2)$). 
The coefficients of the extended operators are  $\mathbf{a}, a_{ii}$, even extensions $\mathbf{\widetilde{a}}$ and 
$\widetilde {a}_{ii}$, and $a_{12}= a_{21}$,  odd extensions of the cross terms $\widetilde {a}_{12}=\widetilde{a}_{21}$. This convention is kept throughout the paper. Note that, in view of the construction
of $Y$, and in particular \eqref{vanish12}, we have
\begin{align}\label{z87a}
&	(a_{ij})_{ij}\gtrsim I, \mathbf{a} \gtrsim 1,\\&	||\mathbf{a}||_{W^{1,\infty}(\mathbb{R}^2)}+||a_{ij}||_{W^{1,\infty}(\mathbb{R}^2)}\lesssim 1.\label{z87b}
\end{align}
We fix a smooth cutoff $\chi_2\in C_0^{\infty}(\Rr^2)$ compactly supported in $((-\ell, \ell)\times \Rr) \cap \overline{\Omega}$ and with the property that $\chi_2(x_1,x_2) =1$ for $x_1\in [-\fr{\ell}{2}, \fr{\ell}{2}]$ and $\varphi(x_1)\le x_2\le \varphi(x_1) +\delta$, we denote by $\mathcal F$ the operator
\be
g \mapsto \mathcal F(g)  = \mathcal O ( (\chi_2 g)\circ X)
\la{mathcalF}
\ee
and we note that
\be
\mathcal F :H_0^1(\Omega)\to H^1 (\Rr^2)
\la{h1bound}
\ee
and
\be
\mathcal F : \mathcal D(\D) = H_0^1(\Omega)\cap H^2(\Omega) \to H^2(\Rr^2)
\la{h2bound}
\ee
are bounded linear operators. We formalize the calculation \eqref{Odeltachi} as 
\beg{prop}\la{intertw}
Let $g\in\mathcal D(\D)$. Let $\chi\in C_0^{\infty}(\Rr^2)$ be such that $\chi\chi_2 = \chi$ (i.e. $\chi_2=1$ on the support of $\chi$).  Then $\mathcal F(\chi g)\in H^2(\Rr^2)$ and
\be
-\mathcal F (\D(\chi g) )= L \mathcal F (\chi g)+B\mathcal F (\chi g).
\la{interdeltaL}
\ee
\end{prop}
\beg{proof} We note that $\chi_2 =1$ on the support of $\D(\chi g)$ and the formal calculation \eqref{Odeltachi} is correct for $g\in C_0^{\infty}(\Omega)$, which is dense in $H_0^1(\Omega)$.
\end{proof}
\beg{rem}  Let $g$ {{ be }}a smooth compactly supported function in $\ov\Rr^2_{+}$ with $g(y) =1$ for $y\in [-\delta, \delta]\times \{0\}$.
Then the function $f(y) = y_2^2g(y)$  is smooth in $\ov\Rr^2_+$ , vanishes quadratically but $\mathcal O(f)$ has discontinuous second derivatives. This example shows that second derivatives of odd extensions of smooth functions which vanish quadratically need not be continuous.
\end{rem}
\beg{rem} The extension of the change of variables $x\mapsto Y(x)$ to the whole space does not necessarily map the whole domain $\Omega$ to the upper half plane, only a very small piece of it, near the boundary point $x_0=0$.  The extensions $\mathcal O$ and 
$\mathcal E$ can be used only on functions in $\Omega$ which have been properly localized near $x_0$.
\end{rem}

We compute now $\mathcal F(\chi e^{t\D}\theta)$. We denote $\rho = e^{t\D}\theta $ and therefore we have $\pa_t \rho= \D \rho$. Moreover,
$\D(\chi \rho) = \chi\D \rho + 2\na\chi\cdot\na \rho + (\Delta\chi)\rho$.  
Therefore, in view of \eqref{interdeltaL}
\be
(\pa_t +L)(\mathcal F(\chi \rho) )=-B\mathcal F (\chi \rho) -\mathcal F(2\na\chi\cdot\na \rho +(\Delta\chi) \rho):=g_{\theta}(t).
\la{heatone}
\ee
Using the Duhamel formula, we have shown the following proposition:
\beg{prop}\la{heaterr}
Let $\theta\in L^2(\Omega)$ and $t\ge 0$. Then
\begin{align}
\mathcal F(\chi e^{t\D}\theta) - e^{-tL}\mathcal F(\chi\theta) &=\int_0^t e^{-(t-s)L} g_{\theta}(s)ds =: R_{\theta}(t).\la{repheaterr}
\end{align}
\end{prop}
The right hand side $R_{\theta}(t)$  of \eqref{repheaterr} plays an important role. 

\beg{prop}\la{mboundR} Let $\theta\in L^{\infty}(\Omega)$. For any $0 \le \beta<2,0\leq r<1, \beta\geq r$, there exists a constant $C_{\chi}$ depending only on $\chi$ and $r$, $\text{diam}(\Omega)$, such that for any $t\ge 0$, we have
\be
\|R_{\theta}(t)\|_{C^\beta(\Rr^2)} \le C_{\chi}  \frac{\log(2+t)}{t+1}\min\{t,1\}^{\frac{r+1-\beta}{2}} \|\theta\|_{C^r(\Omega)}.
\la{Rmbound}
\ee
\end{prop}
\beg{proof} We use the bound
\be
\|e^{s\D}\theta\|_{C^{1}(\Omega)} \lesssim \min\{s,1\}^{-\frac{1-r}{2}}e^{-c_0 s}\|\theta\|_{C^{r}(\Omega)}
\la{heatomegalphaub}
\ee
valid for $0\le s$ and for some $c_0>0$.
This bound follows from a priori bounds on the heat kernel $H_D$ of the heat operator, see for instance \cite{sqgb}. Thus, in view of \eqref{z87a} and \eqref{z87b} we have 
\begin{equation}\label{z76}
||g_{\theta}(s)||_{L^\infty\cap L^1}\lesssim 	||g_{\theta}(s)||_{L^\infty}\lesssim \min\{s,1\}^{-\frac{1-r}{2}}e^{-c_0s}\|\theta\|_{C^{r}(\Omega)}.
\end{equation}
Using \eqref{z17a} and \eqref{z17c2}, \eqref{z76}, we obtain 
\begin{align}
&||R_\theta(t)||_{L^\infty}\lesssim	\int_0^t\frac{1}{1+t-s}||g_{\theta}(s)||_{L^\infty\cap L^1}ds \lesssim \frac{\min\{t,1\}^{\frac{1+r}{2}}}{1+t} \|\theta\|_{C^{r}(\Omega)},\\&
||\na R_\theta(t)||_{L^\infty}\lesssim	\int_0^t\frac{\log(2+t-s)}{1+t-s} \frac{1}{\min\{t-s,1\}^{\frac{1}{2}}}||g_{\theta}(s)||_{L^\infty\cap L^1}ds \lesssim \frac{\log(2+t)\min\{t,1\}^{\frac{r}{2}}}{t+1} \|\theta\|_{C^{r}(\Omega)}.
\end{align}
In view of \eqref{z67b} and \eqref{z67c},  \eqref{z76},  we obtain  for any $|h|\leq 1$. 
\begin{align}\nonumber
&	||\delta_h\na R_\theta(t)||_{L^\infty}\lesssim \int_{0}^{t}\mathbf{1}_{t-s\leq 1}  \left(\frac{\min\{\frac{|h|}{\sqrt{t-s}},1\}}{\sqrt{t-s}}+|h|\frac{\log(2+\frac{\sqrt{t-s}}{|h|})}{(t-s)^{\frac{1}{2}}}\right)\min\{s,1\}^{-\frac{1-r}{2}}e^{-c_0s} ds\|\theta\|_{C^{r}(\Omega)}\\&\quad\quad+ \int_{0}^{t}\mathbf{1}_{t-s> 1}|h|\log(2+\frac{1}{|h|})\frac{\log(2+t-s)}{t-s} \min\{s,1\}^{-\frac{1-r}{2}}e^{-c_0s} ds\|\theta\|_{C^{r}(\Omega)}\nonumber\\&\quad\lesssim {{\left(\min\{\frac{|h|}{\sqrt{t}},1\}+|h| \right)\log(\frac{2}{|h|})}}\min\{t,1\}^{\frac{r}{2}}e^{-\frac{c_0}{2}t}\|\theta\|_{C^{r}(\Omega)}+ \mathbf{1}_{t\geq 1}\frac{\log(2+t)}{t+1}|h|\log(\frac{2}{|h|})\|\theta\|_{C^{r}(\Omega)}.\label{z77}
\end{align}
{{Here we used the fact that 
\begin{equation}
	\int_{0}^{t}\mathbf{1}_{t-s\leq 1}  \frac{\min\{\frac{|h|}{\sqrt{t-s}},1\}}{\sqrt{t-s}}\lesssim  \log(\frac{2}{|h|})\sqrt{t}\min\{\frac{|h|}{\sqrt{t}},1\}.
\end{equation}}}
Therefore, we obtain \eqref{Rmbound}.
\end{proof}
\begin{rem} In view of \eqref{z77}, we have $\na R_\theta\in C^{1-\log}$ and 
\begin{equation}
\sup_{|h|\leq 1}\frac{	||\delta_h\na R_\theta(t)||_{L^\infty}}{|h|\log(2/|h|)}\lesssim  {{ \frac{\log(2+t)}{t+1}\min\{t,1\}^{\frac{r-1}{2}}\|\theta\|_{C^{r}(\Omega)}.}}
\end{equation}
This is an optimal regularity because $L$ has only Lipschitz coefficients. 
\end{rem}

We take $\theta\in L^{\infty}(\Omega)$ and consider  the stream function
\be
\psi_\theta = \l^{-1}\theta = \frac{1}{\Gamma(\frac{1}{2})}\int_0^{\infty} t^{-\fr{1}{2}}e^{t\D}\theta dt.
\la{rdtheta}
\ee
We have directly from \eqref{repheaterr}:
\beg{prop}\la{psierr} Let  $\theta\in L^{\infty}(\Omega)$.  We have
\be
\mathcal F(\chi \psi_{\theta}) - L^{-\fr{1}{2}}\mathcal F(\chi\theta) = \frac{1}{\Gamma(\frac{1}{2})}\int_0^{\infty}t^{-\fr{1}{2}} R_{\theta}(t)dt=: S_{\theta,\chi}
\la{psierrrep}
\ee
where 
\begin{equation}\label{ssmooth}
||S_{\theta,\chi}||_{C^\beta(\mathbb{R}^2)} \lesssim \frac{1}{2-\beta} \|\theta\|_{L^{\infty}(\Omega)}
\end{equation}
holds for all $0\leq \beta<2$. Here $L^{-\frac{1}{2}}$ is defined as the inverse operator of $L^{\frac{1}{2}}$ and is given by \eqref{inveropera}. 
\end{prop}
\beg{proof} In view of \eqref{Rmbound}, we have 
\begin{align}
||S_{\theta,\chi}||_{C^\beta(\mathbb{R}^2)}\lesssim \int_{0}^{\infty}t^{-\frac{1}{2}}\frac{\log(2+t)}{t+1}\min\{t,1\}^{\frac{1-\beta}{2}}dt \|\theta\|_{L^{\infty}(\Omega)}\lesssim \frac{1}{2-\beta} \|\theta\|_{L^{\infty}(\Omega)}.
\end{align}
This implies \eqref{ssmooth}.

\end{proof}

We represent the localized and extended operator relationship for $\l$.
\beg{prop}\la{lerrep} Let  $\theta\in L^{\infty}(\Omega)$.  Then
\be
\mathcal F(\chi\l\theta) - L^{\fr{1}{2}}\mathcal F(\chi \theta) = -\frac{1}{2\Gamma(\frac{1}{2})}\int_0^{\infty}t^{-\fr{3}{2}}R_{\theta}(t)dt=:\mathbf{R}_{\chi}(\theta)
\la{FLam}
\ee
holds. Moreover, we have 
\begin{equation}\label{CsR}
||\mathbf{R}_{\chi}(\theta) ||_{C^r(\mathbb{R}^2)}\lesssim \|\theta\|_{C^r(\Omega)},
\end{equation}
for any $0<r<1$.
\end{prop}
\beg{proof}
Using the heat operator representations of $\l$ and $L^{\fr{1}{2}}$, we have
\begin{align}
\mathcal F(\chi\l\theta) - L^{\fr{1}{2}}\mathcal F(\chi \theta) = \frac{1}{2\Gamma(\frac{1}{2})}\int_0^{\infty} t^{-\fr{3}{2}}\left( e^{-tL}\mathcal F(\chi\theta) -	\mathcal F(\chi e^{t\D}\theta)\right)dt.\label{commul}
\end{align}
Using \eqref{repheaterr} we arrive at \eqref{FLam}.\\ In view of \eqref{Rmbound}, we have for any $0<r<\beta<1$ and $|h|\leq 1$
\begin{align}\label{z78a}
&	|\mathbf{R}_{\chi}(\theta)|\lesssim \int_0^{\infty}t^{-\fr{3}{2}}\frac{\log(2+t)}{t+1}\min\{t,1\}^{\frac{r+1}{2}}dt \|\theta\|_{C^r(\Omega)}\lesssim \frac{1}{r}\|\theta\|_{C^r(\Omega)},\\&
|\delta_{h}\mathbf{R}_{\chi}(\theta)|\lesssim \int_0^{\infty}t^{-\fr{3}{2}}\frac{\log(2+t)}{t+1}\min\{t,1\}^{\frac{r+1}{2}}\min\left\{\frac{|h|}{\sqrt{\min\{t,1\}}},1\right\}^\beta dt \|\theta\|_{C^r(\Omega)}.\label{z78b}
\end{align}
We split {{ the integral   \eqref{z78b} in}} $\int_{0}^{\infty}=\int_{0}^{|h|^2}+\int_{|h|^2}^{2}+\int_{2}^\infty$ to get 
\begin{align}\nonumber
|\delta_{h}\mathbf{R}_{\chi}(\theta)|&\lesssim\left(  \int_{0}^{|h|^2}t^{-1+\frac{r}{2}} dt+|h|^\beta\int_{|h|^2}^2 t^{-1-\fr{\beta-r}{2}} dt+ |h|^\beta\int_{2}^\infty t^{-\fr{3}{2}}\frac{\log(2+t)}{t+1} dt\right) \|\theta\|_{C^r(\Omega)}\\&\lesssim |h|^r \|\theta\|_{C^r(\Omega)}.
\end{align}
Combining this with \eqref{z78a} we obtain the result. 
\end{proof}
\begin{rem} \label{remaforin}Similarly, we have 
\begin{align}\label{z82}
&	||\chi_{in} \psi_{\theta} - \Lambda_{\mathbb{R}^2}^{-1}(\chi_{in}\theta)||_{C^\beta(\mathbb{R}^2)}\lesssim  \|\theta\|_{L^{\infty}(\Omega)},\\&
||\chi_{in} \l\theta - \Lambda_{\mathbb{R}^2}(\chi_{in}\theta)||_{C^r(\mathbb{R}^2)}\lesssim  \|\theta\|_{C^r(\Omega)},\label{z82b}
\end{align}
for any $0<\beta<2$ and $0<r<1$	where $\chi_{in} $ is a cutoff function satisfying $\chi_{in} =1$ in $B(x_0,r_0)$ and   $\chi_{in} =0$ in $\mathbb{R}^2\backslash B(x_0,\frac{5}{4}r_0)$  with $B(x_0,2r_0)\subset \Omega$.
\end{rem}
\begin{rem} In view of Remark \ref{remaforin}, Proposition \ref{psierr} and \eqref{lerrep}, and \eqref{z27b}, we have the bounds
\begin{align}\label{z86a}
&		||\Lambda_D^{-1} \theta||_{C^r(\Omega)}\lesssim ||\theta||_{C^{(r-1)_+}(\Omega)},\quad   {{\text{for any }}}~~r\in (0,2)\backslash\{1\},\\&
||\Lambda_D^{-1} \theta||_{C^1(\Omega)}\lesssim ||\theta||_{C^r(\Omega)},\quad\quad\quad\text{for any}~~r\in (0,1),\label{z86b}\\&
||\Lambda_D \theta||_{C^{r}(\Omega)}\lesssim ||\theta||_{C^{1+r}(\Omega)},\quad\quad {{\text{for any }}}~~r\in (0,1).
\end{align}
\end{rem}

We consider now the localization and extension of the nonlinear term. We denote the usual Poisson bracket by $\{\psi, \theta\} =J(\psi,\theta)$,
\be
\{\psi, \theta\} = \na^{\perp}\psi\cdot\na \theta
\la{poipsitheta}
\ee
and use its behavior under composition 
\be
\{f\circ X, g\circ X\}  = (det\,\na X)\left(\{f, g\}\circ X\right).
\la{poinv}
\ee
Thus, in particular,
\be
\{{{\chi_1}}\psi, {{\chi}}\theta\}\circ X = \mathbf{\widetilde a}\cdot \{  (\chi_1\psi)\circ X, {{ ({{\chi}}\theta)}}\circ X\}.
\la{invpoi}
\ee
holds for smooth cutoffs $\chi, {\chi_1}$ supported in $\overline\Omega$, and where $\mathbf{\widetilde a}= (\det\na Y) \circ X)$ (see \eqref{aij}).\\
We also use the important observation that odd extensions commute with the Poisson bracket. This follows from the properties
\eqref{comuodeven} and from the product rules \eqref{prouodeven}.
We have thus, recalling our definition \eqref{awa},  $ \mathbf{a} = \mathcal E\mathbf{\widetilde a}$,
\be
\mathcal O( \{{{\chi_1}}\psi,  {{\chi}}\theta\}\circ X) =\mathbf{a}\cdot \{\mathcal O( {{ ({{\chi_1}}\psi) }}\circ X), \mathcal O( {{({{\chi}}\theta)}}\circ X)\}.
\ee
Therefore, we have
\be
\mathcal O\left((\na^{\perp}({{\chi_1}} \psi_\theta)\cdot\na({{\chi}}\theta))\circ X\right) = \mathbf{a} \na^{\perp}\left(\mathcal O( {{({{\chi_1}} \psi_\theta)}}\circ X \right)\cdot\na(\mathcal O( {{({{\chi}}\theta)}}\circ X)).
\ee
We proved
\beg{prop} \la{Fnon}Let  $\theta\in L^{\infty}(\Omega)$, let $\psi_{\theta}$ be a stream function defined by \eqref{rdtheta} and let $\chi_1$, $\chi$ be smooth cutoffs supported in $\overline{\Omega}$. Then
\be
\mathcal F\left(\na^{\perp}_x({{\chi_1}}\psi_\theta)\cdot \na_x ({{\chi}}\theta )\right) = \mathbf{a}\na^{\perp}_y(\mathcal F({{\chi_1}}\psi_\theta))\cdot \na_y\mathcal F({{\chi}}\theta)
\ee
holds.
\end{prop}

\section{Extended localized critical SQG}\la{exlsqg}
We start by computing, with $\chi = \chi_i^0$ and $\chi_1 = \chi_i^1$ two localizers, $1\le i\le N$,
\be
\ba
\chi (\na^{\perp}\psi)\cdot\na \theta = \na^{\perp}\psi\cdot\na(\chi\theta)-
\left(\na^{\perp}\psi \cdot\na\chi\right)\theta \\\quad\quad\quad\quad\quad\quad= \na^{\perp}(\chi_1\psi)\cdot\na(\chi\theta) -\left(\na^{\perp} (\chi_1\psi) \cdot\na\chi\right) \theta.
\ea
\la{locprodone}
\ee
The last equality follows because $\chi_1\equiv 1$ is on the support of $\chi$. 

Applying the product rules \eqref{prouodeven}, Proposition \ref{Fnon} we obtained
\beg{prop}\la{nonerr} Let $\theta\in L^{\infty}(\Omega)$, let 
$\psi_{\theta}$ be a stream function defined by \eqref{rdtheta}. Then we have
\be
\mathcal F\left(\chi(\na^{\perp}\psi_{\theta}\cdot \na\theta)\right) - \mathbf{a}\na^{\perp}\mathcal F(\chi_1\psi_\theta)\cdot \na \mathcal F(\chi\theta) = -\mathbf{a}\na^{\perp}\mathcal F(\chi_1\psi_{\theta})\cdot \mathcal F((\na\chi) \theta).
\la{nonerro}
\ee
\end{prop}
In view of \eqref{psierrrep}, we have 
\begin{equation}\label{z78}
\mathcal	F(\chi_1\psi_\theta)= L^{-\fr{1}{2}}\mathcal F(\chi_1\theta) +S_{\theta,\chi_1}.
\end{equation}
We denote by $\theta_i, \widetilde{\theta}_i$ the functions
\begin{align}\label{wideth}
&	\theta_i = \mathcal F(\chi\theta),\\&
\widetilde{\theta}_i=\mathcal F(\chi_1\theta),\label{overtheta}
\end{align}
by $u_i$
\begin{equation}\label{tilu}
{u_i}= \mathbf{a}\na^{\perp}L^{-\fr{1}{2}}(	\widetilde{\theta}_i) +u_{re},
\end{equation}
with 
\begin{equation}
u_{re}=\mathbf{a}\na^{\perp}S_{\theta,\chi_1},
\end{equation}
and by $	\widetilde{\gamma}$ the vector 
\begin{equation}\label{tilga}
\widetilde{\gamma}=	\mathcal F((\na\chi) \theta).
\end{equation}
Note that 
\begin{equation}
{\theta}_i=	\eta\widetilde{\theta_i},
\end{equation}
where 	 $\eta=\mathcal E(\chi\circ X)$ is a Lipschitz cutoff function satisfying $\eta=1$ in $B(x_0,r_1)$, $\eta=1$ in $\mathbb{R}^2\backslash B(x_0,4r_1)$ for some $x_0\in \mathbb{R}^2$ and $r_1>0$.  \\	
Multiplying  the SQG equation \eqref{sqg} by $\chi^0_i$, using the definitions \eqref{wideth}, \eqref{overtheta}, \eqref{tilu} and \eqref{tilga} above, using \eqref{z78}, \eqref{nonerro} and \eqref{FLam}, we arrive at
\be
\pa_t \theta_i+ 	{u_i}\cdot \na\theta_i + L^{\fr{1}{2}}\theta_i =f,
\la{widesqg}
\ee
where 
\begin{equation}
f=	-\mathbf{R}_{\chi}(\theta) +	{u_i}\cdot \widetilde{\gamma}
\end{equation}
for $i\le N_1$.
\begin{lemma} For any $0<r<1$ and $0<\beta<1$, the following inequalities hold 
\begin{align}\label{esu_re}
&	||u_{re}||_{C^r(\mathbb{R}^2)}\lesssim \|\theta\|_{L^{\infty}(\Omega)},\\&
||f||_{C^r(\mathbb{R}^2)}\lesssim ||\theta||_{C^r(\Omega)}\left(1+||\theta||_{C^{\beta}(\Omega)}\right).\label{fo}
\end{align}
\end{lemma}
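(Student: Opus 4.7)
\emph{Proof plan.} Both inequalities reduce to standard mapping properties of the ingredients built in Propositions~\ref{psierr} and \ref{lerrep}, together with the H\"older product rule, plus a Schauder-type boundedness of the zero-order operator $\nabla L^{-1/2}$.

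\emph{First inequality.} Since $0<r<1$, the exponent $1+r$ lies in the allowed range $[0,2)$ of Proposition~\ref{psierr}, and \eqref{ssmooth} yields $\|S_{\theta,\chi_1}\|_{C^{1+r}(\mathbb R^2)}\lesssim (1-r)^{-1}\|\theta\|_{L^\infty(\Omega)}$, hence $\|\nabla^\perp S_{\theta,\chi_1}\|_{C^r(\mathbb R^2)}\lesssim\|\theta\|_{L^\infty(\Omega)}$. Multiplication by $\mathbf a$, which is $W^{1,\infty}$ by \eqref{z87b}, preserves $C^r$ for $0<r<1$, so $\|u_{re}\|_{C^r}=\|\mathbf a\,\nabla^\perp S_{\theta,\chi_1}\|_{C^r}\lesssim\|\theta\|_{L^\infty(\Omega)}$.

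\emph{Second inequality.} Decompose $f=-\mathbf R_{\chi}(\theta)+u_i\cdot\widetilde\gamma$. The first summand is controlled directly by Proposition~\ref{lerrep} via \eqref{CsR}. For the product, apply the H\"older product rule $\|u_i\widetilde\gamma\|_{C^r}\lesssim \|u_i\|_{L^\infty}\|\widetilde\gamma\|_{C^r}+\|u_i\|_{C^r}\|\widetilde\gamma\|_{L^\infty}$. Because $\widetilde\gamma=\mathcal F((\nabla\chi)\theta)$ with $\nabla\chi$ smooth and compactly supported, $\|\widetilde\gamma\|_{C^r}\lesssim\|\theta\|_{C^r(\Omega)}$ and $\|\widetilde\gamma\|_{L^\infty}\lesssim\|\theta\|_{L^\infty(\Omega)}$. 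Write $u_i=\mathbf a\,\nabla^\perp L^{-1/2}\widetilde\theta_i+u_{re}$; the $u_{re}$ contribution is absorbed by the first inequality, and the remaining task is to estimate $\nabla L^{-1/2}\widetilde\theta_i$ in $L^\infty$ and $C^r$.

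\emph{Main obstacle and assembly.} The central step is the Schauder bound $\|\nabla L^{-1/2} g\|_{C^\sigma(\mathbb R^2)}\lesssim \|g\|_{C^\sigma(\mathbb R^2)}$ for every $\sigma\in(0,1)$, valid because $\nabla L^{-1/2}$ is a Calder\'on--Zygmund operator whose kernel comes from the representation \eqref{z52} and the heat-kernel estimates of Appendix~2 (Lemma~\ref{heatke}), the Lipschitz regularity \eqref{z87b} of the coefficients of $L$ being exactly what is required. Applied with $\sigma=r$ this gives $\|u_i\|_{C^r}\lesssim\|\theta\|_{C^r(\Omega)}$ after multiplication by $\mathbf a\in W^{1,\infty}$; applied with $\sigma=\beta$ it gives $\|u_i\|_{L^\infty}\lesssim\|\theta\|_{C^\beta(\Omega)}$ --- and the $C^\beta$ hypothesis is essential, precisely because $\nabla L^{-1/2}$ does not map $L^\infty$ to $L^\infty$. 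Inserting these bounds yields $\|u_i\widetilde\gamma\|_{C^r}\lesssim \|\theta\|_{C^\beta}\|\theta\|_{C^r}+\|\theta\|_{C^r}\|\theta\|_{L^\infty}\lesssim\|\theta\|_{C^r(\Omega)}\bigl(1+\|\theta\|_{C^\beta(\Omega)}\bigr)$, using $\|\theta\|_{L^\infty}\leq\|\theta\|_{C^\beta}$. Combined with the $\mathbf R_{\chi}(\theta)$ bound this proves \eqref{fo}.
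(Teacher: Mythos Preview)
Your proof is correct and matches the paper's approach: \eqref{ssmooth} handles $u_{re}$, and for $f$ the paper likewise combines \eqref{CsR} with the product rule and the H\"older mapping property \eqref{z27b} of $\nabla L^{-1/2}$. One small caveat: the Schauder bound you state for $\nabla L^{-1/2}$ actually requires an additional $\|g\|_{L^1}$ term on the right (this is \eqref{z27b}; the operator does not preserve $C^\sigma$ for general $g$ on $\mathbb R^2$), but since $\widetilde\theta_i$ is compactly supported this is absorbed into $\|\theta\|_{L^\infty(\Omega)}$ and the argument goes through unchanged.
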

\begin{proof} In view of \eqref{ssmooth}, \eqref{z27b}, \eqref{z87a} and \eqref{z87b}, we obtain \eqref{esu_re} and 
\begin{equation}
||{u_i}||_{C^\beta(\mathbb{R}^2)}\lesssim ||\theta||_{C^\beta(\Omega)}+||\theta||_{L^1(\Omega)}\lesssim ||\theta||_{C^\beta(\Omega)}.
\end{equation}
Combining this with  \eqref{CsR} and $	||\widetilde{\gamma}||_{C^\beta(\mathbb{R}^2)}\lesssim ||\theta||_{C^\beta(\Omega)}$ to obtain \eqref{fo}.
\end{proof}
Similarly, the equation for interior balls ($N_1+1 \le i\le N$) for $\theta_i = \chi_i^0\theta$ is
\begin{align}\label{z35a1}
&	\pa_t \theta_i + u_i\cdot\na \theta_i + \Lambda_{\mathbb{R}^2}\theta_i = f_{in},\\&
u_i = \na^{\perp}\Lambda_{\mathbb{R}^2}^{-1}(\chi_{1}\theta)+  u_{er,in},\label{z35b1}
\end{align}

where 
\begin{align}
&	f_{in}=\Lambda_{\mathbb{R}^2}(\chi\theta )- \chi\l (\theta )+(u\cdot\na \chi)\theta,\\&
u_{er,in}=\na^{\perp}(\chi_{1}\l^{-1}(\theta))-\na^{\perp}\Lambda_{\mathbb{R}^2}^{-1}(\chi_{1}\theta),\end{align}
and	 $\chi=\chi_i^0, \chi_1=\chi_i^1$ for $N_1<i\leq N$.\\ In view of Remark \ref{remaforin}, we have 
\begin{align}\label{esu_rein}
&	||u_{re,in}||_{C^r(\mathbb{R}^2)}\lesssim \|\theta\|_{L^{\infty}(\Omega)},\\&
||f_{in}||_{C^r(\mathbb{R}^2)}\lesssim ||\theta||_{C^r(\Omega)}\left(1+||\theta||_{C^{\beta}(\Omega)}\right),\label{foin}
\end{align}
for any $0<r,\beta<1$.

\section{Bounds for a Linear Dissipative Advection Equation}\la{bld}
In this section, we consider the linear advection equation
	\begin{equation}\label{z48}
	\partial_tv(x,t) + b(x,t)\cdot\na v(x,t)+L^{\fr12}v(x,t)=f(x,t),
\end{equation}
in $\mathbb{R}^d\times [0,T]$,  {{ with $d\geq 2$}} where $L^{\frac{1}{2}}$ is given by \eqref{z52}.  

	\begin{lemma} \label{estidiffusion} Assume that  $v\in L^\infty([0,T],C^{\alpha_0})\cap {{L^\infty_{loc}((0,T]}},C^{1+\alpha_0})$ is a solution of \eqref{z48} for some $\alpha_0\in (0,1)$. Then, the following inequalities hold for any $\alpha_1\in(0,1/3),\alpha_2\in (0,1)$ and $\alpha_2>\alpha_1$, 
	\begin{align}\nonumber
		||v||_{L^1_T(\dot C^{1+\alpha_1})}&\lesssim
		M_1{{(T+1)}}\left({{||v(0)||_{ C^{\alpha_2}}}}+||v||_{L^1_T(L^\infty)}+{{||f||_{L^1_T( C^{\alpha_2})}}}\right)\\&\quad\quad+M_1^{\frac{3}{\alpha_1}}{{(T+1)^{\frac{\alpha_2}{\alpha_1}}}}\int_{0}^{T} (1+	{{||b||_{ C^{\alpha_2}}}})^{\frac{\alpha_2}{\alpha_1^2}}||v||_{\dot C^{\alpha_1}}ds;\label{z31b}
	\end{align}
	and 
	\begin{align}\nonumber
		||v||_{L^\infty_{T}(\dot C^{2\alpha_1})}&+\sup_{s\in [0,T]}s^{1-\alpha_1}	||v(s)||_{\dot C^{1+\alpha_1}}\lesssim M_1 ||v(0)||_{\dot C^{2\alpha_1}}+M_1 \sup_{s\in [0,T]}s^{1-\alpha_1}||f(s)||_{\dot C^{\alpha_1}}
		\\&\quad\quad\quad\quad\quad+\left(M_1^{\frac{6}{\alpha_1}}T^{2(1-\alpha_1)}\left(1+	||b||_{L^\infty(\dot C^{\alpha_1})}\right)^{\frac{2}{\alpha_1}}+M_1 T^{1-\alpha_1} \right)||v||_{L^\infty_{T}(L^\infty)}\label{z57};
	\end{align}
	and 
	\begin{align}\nonumber
		&\sup_{s\in [0,T]}s^{1-\alpha_1}	||v(s)||_{\dot C^{1+\alpha_1}}\lesssim M_1 T^{\alpha_1}||v(0)||_{\dot C^{3\alpha_1}}+M_1 \sup_{s\in [0,T]}s^{1-\alpha_1}||f(s)||_{\dot C^{\alpha_1}}
		\\&\quad\quad\quad\quad\quad\quad\quad\quad+M_1^{\frac{3}{\alpha_1}}T^{1-\alpha_1}\left(1+ 	||b||_{L^\infty(\dot C^{\alpha_1})}\right)^{\frac{1}{\alpha_1}}||v||_{L^\infty_{T}(\dot C^{\alpha_1})}+M_1 T^{1-\alpha_1} ||v||_{L^\infty_{T}(L^\infty)},\label{z58}
	\end{align}
	where $M_1=1+||b||_{L^\infty_{T}(L^\infty)} $.
\end{lemma}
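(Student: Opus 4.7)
The strategy is the method of freezing coefficients described in the introduction. For each target point $y\in\mathbb{R}^d$, approximate the variable-coefficient generator $b(x,t)\cdot\nabla + L^{1/2}$ by its ``tangent'' at $y$, the constant coefficient operator $\mathcal{L}_y := b(y,t)\cdot\nabla_x + L_y^{1/2}$, whose semigroup kernel is explicit and governed by \eqref{z47} together with the pointwise estimates of Appendix 2. Rewrite \eqref{z48} as
\[
\partial_t v + \mathcal{L}_y v = f + \bigl(b(y,t)-b(x,t)\bigr)\cdot\nabla v + \bigl(L_y^{1/2} - L^{1/2}\bigr) v =: \tilde f_y(x,t),
\]
and apply Duhamel against the frozen semigroup $P_y(t)=e^{-t\mathcal L_y}$. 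Evaluating at $x=y$ yields the representation
\[
v(y,t) = \bigl(P_y(t) v(0)\bigr)(y) + \int_0^t \bigl(P_y(t-s)\tilde f_y(s)\bigr)(y)\,ds,
\]
and analogous formulas for first and second spatial finite differences at $y$, which is what enters the Hölder seminorms.

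For the main bound \eqref{z31b}, control the $\dot C^{1+\alpha_1}$ seminorm through a single finite difference. The explicit kernel derivatives of $P_y(t-s)$ behave like a fractional heat kernel at scale $|t-s|$, producing a gain factor of order $|t-s|^{-1+\alpha_2-\alpha_1}$ when acting on data in $\dot C^{\alpha_2}$, which is time-integrable precisely because $\alpha_2>\alpha_1$. The drift error $(b(y)-b)\cdot\nabla v$ contributes $|x-y|^{\alpha_2}\|b\|_{\dot C^{\alpha_2}}|\nabla v|$, absorbed by the spatial localization of $P_y(t-s)$ at scale $|t-s|$; the diffusion error $(L_y^{1/2}-L^{1/2})v$ is controlled via the kernel-difference Lemma \ref{heatke} and the zero-order bound Lemma \ref{remindL1/2}, which together show that the Hölder moment $|z|^{\alpha_2}$ can be paired with a matching gain in $t$. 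After these bounds the integrand contains $\|v\|_{\dot C^{\alpha_1}}$ multiplied by a power of $(1+\|b\|_{C^{\alpha_2}})$; the exponent $\alpha_2/\alpha_1^2$ in \eqref{z31b} arises from the interpolation inequality \eqref{interdeltagamma} between $\|v\|_{\dot C^{\alpha_1}}$ and $\|v\|_{\dot C^{1+\alpha_1}}$ combined with Young's inequality, each application of which costs one power of $1/\alpha_1$.

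For \eqref{z57} and \eqref{z58}, use second-order finite differences to represent the $\dot C^{2\alpha_1}$ and $\dot C^{3\alpha_1}$ seminorms, and repeat the freezing argument at each finite-difference pair. The weight $t^{1-\alpha_1}$ multiplying $\|v(s)\|_{\dot C^{1+\alpha_1}}$ encodes the short-time $t^{-(1-\alpha_1)}$ gain of the frozen semigroup against derivatives; closing the self-referential inequality for $\|v\|_{\dot C^{\alpha_1}}$ requires iterating interpolation-plus-Young with a $1/\alpha_1$ cost per step, producing the $M_1^{3/\alpha_1}$ and $M_1^{6/\alpha_1}$ factors. The restriction $\alpha_1<1/3$ is exactly what is needed so that $3\alpha_1<1$ in \eqref{z58}, keeping the data norm $\|v(0)\|_{\dot C^{3\alpha_1}}$ subcritical.

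The main technical obstacle is the precise accounting of space-time singularities in the frozen-kernel difference and the commutator $L_y^{1/2}-L^{1/2}$: because $A$ is only Lipschitz, the kernel expansion of Appendix 2 loses a full degree of smoothness, so every Hölder moment $|z|^{\alpha_2}$ gained from the spatial weight must be paired with a matching gain in time. The delicate matched-gain accounting, together with the iterative interpolation that produces the $1/\alpha_1$ exponents, is where the bulk of the work resides; the exponents $3/\alpha_1$, $6/\alpha_1$ and $\alpha_2/\alpha_1^2$ should be read as natural outputs of this iteration rather than as sharp structural constants.
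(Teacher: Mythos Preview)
Your plan follows the same freezing-coefficients strategy as the paper: write $\partial_t v + b(x_0,t)\cdot\nabla v + L_{x_0}^{1/2}v = F_1(x,t) + F_2(x,x_0,t)$, apply Duhamel against the explicit kernel $\mathbf{H}_{A(x_0)}$ of the frozen semigroup, differentiate, and then set $x_0=x$. Two places where your outline needs correction or sharpening.

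First, the paper does \emph{not} use second-order finite differences anywhere. For \eqref{z57} the $\dot C^{2\alpha_1}$ seminorm of $v$ is captured by $|\delta_h v|/|h|^{2\alpha_1}$ using only first differences and the kernel bound $\int|\delta_h\mathbf H_{A(x)}|(z,t)|z|^{\alpha_1}dz\lesssim\min\{|h|/t,1\}^{1-\alpha_1}|h|^{\alpha_1}$ (Lemma~\ref{betalemma}); the extra $|h|^{\alpha_1}$ comes from the kernel, not from a second difference. Likewise the $\dot C^{3\alpha_1}$ norm in \eqref{z58} enters only through the initial data term and is handled by the same first-difference machinery with $\alpha_4=3\alpha_1$.

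Second, and more substantively, you omit the mechanism that actually makes the $F_2$ term close. After applying $\delta_h\nabla_x$ and setting $x_0=x$, the integrand contains $F_2(y,x,s)$ which depends on the frozen point $x$; taking $\delta_h^x$ therefore moves \emph{both} the kernel argument and the frozen point. The paper resolves this by splitting the time integral into $\{|h|\le t-s\}$ and $\{|h|>t-s\}$. In the first region one uses $\delta_h\nabla\mathbf H$ directly and the bound $|F_2(y,x,s)|\lesssim|x-y|^{\alpha_2}(1+\|b\|_{\dot C^{\alpha_2}})\|v\|_{\dot C^{\alpha_1}}^{\alpha_1}\|v\|_{\dot C^{1+\alpha_1}}^{1-\alpha_1}$. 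In the second region the difference $\delta_h$ is unraveled into two single kernels, and the new piece $F_2(y,x,s)-F_2(y,x+h,s)$ is controlled by the separate estimate
\[
|\delta_h^{x_0}F_2(x,x_0,s)-\delta_h^{x_0}F_2(y,x_0,s)|\lesssim |x-y|^{\kappa}|h|^{\alpha_2-\kappa}\bigl(1+\|b\|_{\dot C^{\alpha_2-\kappa}}\bigr)\|v\|_{\dot C^{1+\kappa}},
\]
which comes from \eqref{z53b}. Without this splitting the $F_2$ contribution is not integrable near $s=t$. Once the splitting is in place, the interpolation/Young step you describe does produce the stated exponents; the paper's intermediate inequality \eqref{z55} is the common input for all three estimates.
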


\begin{rem}
	 We see from \eqref{z58}, that $\sup_{s\in [0,T]}s^{1-\alpha_1}	||v(s)||_{\dot C^{1+\alpha_1}}$ is small when $T$ is small. This property  is used in the proof of the Lemma \ref{keyle} which is key for the proof of Theorem \ref{calphaglob}.
\end{rem}
\begin{rem} When $b(x,t)\equiv 0$, estimates \eqref{z31b}, \eqref{z57} and \eqref{z58} are proven by  K. Chen, R. Hu and the third author  in \cite[Theorem 1.1]{Chenhunguyen}. When $b(x,t)\not= 0$, these estimates are new. 
\end{rem}
The proof of Lemma \eqref{estidiffusion} is based on a method of freezing coefficients, to avoid directly differentiating $b$ or the variable coefficients of $L^{\fr{1}{2}}$.
We start by taking a fixed $x_0$ and computing the kernel of the semigroup generated by the constant coefficients operator $L_{x_0}^{\fr{1}{2}}$.  
A direct calculation verifies that
	\begin{align}
		\mathbf{H}_{A(x_0)}(x,t)=\frac{1}{\sqrt{\pi}}\int_{0}^{\infty}\frac{e^{-\rho}}{\sqrt{\rho}}  G_{A(x_0)}(x-y,\frac{t^2}{4\rho})d\rho=\frac{c_1}{\sqrt{\det A(x_0)}}\frac{t}{\left(t^2+|A(x_0)^{-\fr{1}{2}}x|^2\right)^{\frac{d+1}{2}}}
	\end{align}
	is the kernel of $\partial_t+L_{x_0}^{\frac{1}{2}}$, 
	where $c_1\int_{\mathbb{R}^d}\frac{dx}{\left(1+|x|^2\right)^{\frac{d+1}{2}}}=1$. Above $A(x_0)^{-\fr{1}{2}}$ is the square root of the positive symmetric matrix $A(x_0)^{-1}$.\vspace*{0.3cm}\\
		We write 
	\begin{align}
		\partial_tv(x,t) + b(x_0,t)\cdot \na v(x,t)+	L_{x_0}^{\frac{1}{2}}v(x,t)=F(x,x_0,t)
	\end{align}
	for any $x_0\in \mathbb{R}^d$	where 
	\begin{align}\nonumber
		F(x,x_0,t)&=\left\{f(x,t)+(L_{y}^{\frac{1}{2}}v(x,t)\vert_{y=x}-	L^{\frac{1}{2}}v(x,t))\right\}\\&\nonumber+	\left\{(L_{x_0}^{\frac{1}{2}}v(x,t)-	L_{y}^{\frac{1}{2}}v(x,t)\vert_{y=x})+(b(x_0,t)-b(x,t))\na v(x,t)\right\}\\&:=	F_1(x,t)+	F_2(x,x_0,t).
	\end{align}
	Using \eqref{z54} and \eqref{z53}, \eqref{z53b}, we have 
	\begin{align}
		&||F_1(t)||_{\dot C^{\alpha_2}}\lesssim||f(t)||_{\dot C^{\alpha_2}}+||v(t)||_{ C^{\alpha_2+\kappa}}\label{z49a},\\&
		|F_2(x,x_0,t)|\lesssim
		|x-x_0|^{\alpha_2}\left(1+	||b||_{L^\infty_{T}(\dot C^{\alpha_2})}\right)||v(t)||_{\dot C^{\alpha_1}}^{\alpha_1}||v(t)||_{\dot C^{1+\alpha_1}}^{1-\alpha_1},\label{z49b}
	\end{align}
and 
{{\begin{align}\nonumber
	&	|\delta_{h}^{x_0}F_2(x,x_0,t)-\delta_{h}^{x_0}F_2(y,x_0,t)|
\\&\quad\quad\leq\nonumber |(L_{x_0+h}^{\frac{1}{2}}-L_{x_0}^{\frac{1}{2}})v(x,t)-(L_{x_0+h}^{\frac{1}{2}}-L_{x_0}^{\frac{1}{2}})v(y,t)|+|\delta_{h}b(x_0,t)||\na v(x,t)-\na v(y,t)|
\\&\quad\quad	\lesssim |x-y|^\kappa|h|^{\alpha_2-\kappa} \left(1+ 	||b(t)||_{\dot C^{\alpha_2-\kappa}}\right) ||v(t)||_{\dot C^{1+\kappa}},		\label{z49c}
\end{align}}}
	for any $\kappa\in (0,\min\{1-\alpha_2,\alpha_2\})$.\\
	 Now we  compute the kernel of $	\partial_t + b(x_0,t)\na +	L_{x_0}^{\frac{1}{2}}$. 
	Integrating by parts, we have 
	\begin{align}\nonumber
		&\int_{0}^{t}	\int_{\mathbb{R}^d}		\mathbf{H}_{A(x_0)}(x-y-\int_{s}^{t} b(x_0,\tau)d\tau,t-s) 	F(y,x_0,s)dyds\\&\nonumber= \int_{0}^{t}	\int_{\mathbb{R}^d}	\mathbf{H}_{A(x_0)}(x-y-\int_{s}^{t} b(x_0,\tau)d\tau,t-s) \left(\partial_sv(y,s) + b(x_0,s)\na v(y,s)+L^{\fr12}_{x_0}v(y,s)\right)dyds\\&\nonumber=v(x,t)-	\int_{\mathbb{R}^d}	\mathbf{H}_{A(x_0)}(x-y-\int_{0}^{t} b(x_0,\tau)d\tau,t) v(y,0)dy \\&\nonumber- \int_{0}^{t}	\int_{\mathbb{R}^d}	\partial_s\left[\mathbf{H}_{A(x_0)}(x-y-\int_{s}^{t} b(x_0,\tau)d\tau,t-s)\right] v(s,y) dyds\\&\nonumber+ \int_{0}^{t}	\int_{\mathbb{R}^d}	(b(x_0,s)\na_x+L_{x_0}^{\fr12}\mathbf{H}_{A(x_0)})(x-y-\int_{s}^{t} b(\tau,x_0)d\tau,t-s) v(y,s)dyds\\&=v(x,t)-	\int_{\mathbb{R}^d}	\mathbf{H}_{A(x_0)}(x-y-\int_{0}^{t} b(x_0,\tau)d\tau,t) v(y,0)dy.\end{align}
	Thus, 
	\begin{align}\nonumber
		v(x,t)&=\int_{\mathbb{R}^d}	\mathbf{H}_{A(x_0)}(x-y-\int_{0}^{t} b(\tau,x_0)d\tau,t) v(y,0)dy\\&+\int_{0}^{t}	\int_{\mathbb{R}^d}		\mathbf{H}_{A(x_0)}(x-y-\int_{s}^{t} b(\tau,x_0)d\tau,t-s) 	F(y,x_0,s)dyds,\label{z50}
	\end{align}
	for any $x_0\in \mathbb{R}^d$.\\
	
	This verifies that the map  $(t,s,x,y)\to \mathbf{H}_{A(x_0)}(t-s,x-y-\int_{s}^{t} b(\tau,x_0)d\tau) $ is the kernel of
	the semigroup generated by the operator $-\left(b(t,x_0)\cdot\na +	L_{x_0}^{\frac{1}{2}}\right)$.\vspace*{0.3cm}\\

In the proof of Lemma \eqref{estidiffusion}, we make use of the following bound. 
\begin{lemma}\la{betalemma} For $\beta\in [0,1)$, and $j=0,1$, we have
	\begin{equation}\label{z51}
		\sup_x	\int_{\mathbb{R}^d}|(\delta_h^z\na^j_z	\mathbf{H}_{A(x)})|(z,t)|z|^\beta dz\lesssim\min\{\frac{|h|}{t},1\}^{1-\beta} \frac{|h|^\beta}{t^{j}}.
	\end{equation}
\end{lemma}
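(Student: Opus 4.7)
The plan is to derive the bound from the explicit Poisson-like form $\mathbf{H}_{A(x)}(z,t)=\frac{c_1}{\sqrt{\det A(x)}}\,\frac{t}{(t^2+|A(x)^{-1/2}z|^2)^{(d+1)/2}}$ together with the uniform ellipticity \eqref{z65a}--\eqref{z65b}, which yield the pointwise bounds
\[
|\nabla^k_z\mathbf{H}_{A(x)}(z,t)|\lesssim \frac{t}{(t+|z|)^{d+1+k}},\qquad k\ge 0,
\]
uniformly in $x\in\mathbb{R}^d$. This removes the $\sup_x$ at the outset and reduces the claim to a deterministic kernel estimate. I then split into two regimes according to the size of $|h|$ relative to $t$.

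In the regime $|h|\ge t$, where $\min\{|h|/t,1\}^{1-\beta}=1$ and the right-hand side is simply $|h|^\beta/t^j$, I apply the triangle inequality and bound $|\delta_h^z\nabla_z^j\mathbf{H}_{A(x)}(z,t)|$ by $|\nabla_z^j\mathbf{H}_{A(x)}(z,t)|+|\nabla_z^j\mathbf{H}_{A(x)}(z+h,t)|$. A direct scaling $z=tu$ gives $\int t(t+|z|)^{-(d+1+j)}|z|^\beta dz\lesssim t^{\beta-j}$, and the shifted integral reduces to the same using $|w-h|^\beta\lesssim |w|^\beta+|h|^\beta$. Both pieces are then dominated by $|h|^\beta/t^j$ because $t\le |h|$.

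In the regime $|h|<t$, the right-hand side becomes $|h|\,t^{\beta-1-j}$. I gain one derivative via the fundamental theorem of calculus,
\[
|\delta_h^z\nabla_z^j\mathbf{H}_{A(x)}(z,t)|\le |h|\sup_{s\in[0,1]}|\nabla^{j+1}_z\mathbf{H}_{A(x)}(z+sh,t)|\lesssim |h|\sup_{s\in[0,1]}\frac{t}{(t+|z+sh|)^{d+2+j}},
\]
and split the $z$-integral at $|z|=2|h|$. On $\{|z|\le 2|h|\}$, the kernel is uniformly $\lesssim t^{-d-1-j}$ and $\int_{|z|\le 2|h|}|z|^\beta dz\lesssim |h|^{d+\beta}$, yielding $|h|^{d+\beta+1}/t^{d+1+j}\le |h|\,t^{\beta-1-j}$ (using $|h|<t$ and $d+\beta\ge 0$). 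On $\{|z|>2|h|\}$ one has $|z+sh|\sim |z|$ uniformly in $s$, and a further split at $|z|=t$ gives $|h|\,t^{\beta-1-j}$ in each subregion; convergence at infinity is ensured by $\beta<1$. Rewriting $|h|\,t^{\beta-1-j}=(|h|/t)^{1-\beta}|h|^\beta/t^j$ closes the argument.

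The main obstacle is only bookkeeping: one must verify that the factor $(|h|/t)^{1-\beta}$ appears with the correct exponent, the single power of $|h|$ coming from the mean value and the remaining $(|h|/t)^{-\beta}$ from the interplay of the weight $|z|^\beta$ with the integration volume at scale $t$. No cancellation or commutator structure is involved; the argument parallels classical kernel estimates for the Poisson semigroup.
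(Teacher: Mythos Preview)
Your proof is correct and follows essentially the same approach as the paper: split according to whether $|h|\lesssim t$ or $|h|\gtrsim t$, use the triangle inequality in the large-$|h|$ regime and the mean value (one extra derivative) in the small-$|h|$ regime, and integrate the resulting Poisson-type kernel bounds. The paper's treatment of the small-$|h|$ case is slightly more streamlined: since $|h|\le 4t$ implies $t+|z+sh|\gtrsim t+|z|$ uniformly in $s\in[0,1]$, one obtains the pointwise bound $|\delta_h\nabla^j\mathbf{H}_{A(x)}(z,t)|\lesssim |h|\,t\,(t+|z|)^{-(d+j+2)}$ directly, avoiding your extra split at $|z|=2|h|$; but this is a cosmetic difference, not a substantive one.
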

\begin{proof} Case 1: $ |h|\leq 4 t$. We have 
	\begin{align}
		|(\delta_h^z\na^j_z	\mathbf{H}_{A(x)})|(z,t)\lesssim \frac{|h| t}{(t+|z|)^{d+j+2}}. 
	\end{align}
	So, 
	\begin{align}
		\sup_x	\int_{\mathbb{R}^d}|(\delta_h\na^j	\mathbf{H}_{A(x)})|(z,t)|z|^\beta dz\lesssim  \int_{\mathbb{R}^d} \frac{|h| t}{(t+|z|)^{d+j+2}}|z|^\beta dz\sim \frac{|h|}{t^{j+1-\beta}}.
	\end{align}
	Case 2: $ |h|\geq  4 t$. We have
	\begin{align}
		&	\sup_x	\int_{\mathbb{R}^d}|(\delta_h\na^j	\mathbf{H}_{A(x)})|(z,t)|z|^\beta dz\lesssim  \int_{\mathbb{R}^d}\left( \frac{t}{(t+|z-h|)^{d+j+1}}+ \frac{t}{(t+|z|)^{d+j+1}}\right)|z|^\beta dz\lesssim\frac{|h|^\beta}{t^{j}}.
	\end{align}
	The two cases together yield the result. 
\end{proof}
\begin{proof}[Proof of Lemma \ref{estidiffusion}]  
 We apply $\delta_h\na_x^j, j=0,1$  to both sides of \eqref{z50}, then we take $x_0=x$ to obtain  that
 \begin{align}\nonumber
		&\delta_h\na_x^jv(x,t)=\int_{\mathbb{R}^d}		(\delta_{h}\na^j\mathbf{H}_{A(x_0)})(x-y-\int_{0}^{t} b(x_0,\tau)d\tau,t)\vert_{x_0=x} v(y,0)dy\\&\quad+\int_{0}^{t}	\int_{\mathbb{R}^d}		(\delta_h\na^j	\mathbf{H}_{A(x_0)})(x-y-\int_{s}^{t} b(x_0,\tau)d\tau,t-s)\vert_{x_0=x} \left(F_1(y,s)+	F_2(y,x,s)\right)dyds.\label{z70}
	\end{align}
	Then, we  write 
	\begin{align}\nonumber
		&	\delta_h\na_xv(x,t)=\int_{\mathbb{R}^d}		(\na\mathbf{H}_{A(x_0)})(x-y-\int_{0}^{t} b(x_0,\tau)d\tau,t)\vert_{x_0=x} ( \delta_{-h}v(y,0)-\delta_{-h}v(x,0))dy\\&\nonumber+\int_{0}^{t}\mathbf{1}_{\frac{|h|}{t-s}\leq 1}	\int_{\mathbb{R}^d}		(\delta_{h}\na	\mathbf{H}_{A(x_0)})(x-y-\int_{s}^{t} b(x_0,\tau)d\tau,t-s)\vert_{x_0=x} (F_1(y,s)-F_1(x,s)+F_2(y,x,s))dyds\nonumber
		\\&\nonumber+\int_{0}^{t}\mathbf{1}_{\frac{|h|}{t-s}> 1}	\int_{\mathbb{R}^d}		(\na	\mathbf{H}_{A(x_0)})(x-y-\int_{s}^{t} b(x_0,\tau)d\tau,t-s)\vert_{x_0=x} 	(\delta_{-h}F_1(y,s)-\delta_{-h}F_1(x,s))dyds
		\\&\nonumber+\sum_{i=0,1}(-1)^{i+1}\int_{0}^{t}\mathbf{1}_{\frac{|h|}{t-s}> 1}		\int_{\mathbb{R}^d}		(\na	\mathbf{H}_{A(x_0)})(x+ih-y-\int_{s}^{t} b(x_0,\tau)d\tau,t-s)\vert_{x_0=x} 	F_2(y,x+ih,s)dyds
		\\&\nonumber+\int_{0}^{t}\mathbf{1}_{\frac{|h|}{t-s}> 1}		\int_{\mathbb{R}^d}		(\na	\mathbf{H}_{A(x_0)})(x+h-y-\int_{s}^{t} b(x_0,\tau)d\tau,t-s)\vert_{x_0=x}\\&\quad\quad\quad\quad\quad\quad\times \left(	F_2(y,x,s)-	F_2(y,x+h,s)- (	F_2(x+h,x,s)-	F_2(x+h,x+h,s))\right)dyds.
	\end{align}
	Here we used the fact that
	\begin{align}&
		\int_{\mathbb{R}^d}(\delta_h u_1)(x-y)u_2(y)dy=\int_{\mathbb{R}^d}u_1(x-y)\delta_{-h} u_2(y)dy,~~\forall~~u_1,u_2,\\&
		\int_{\mathbb{R}^d}		(\na	\mathbf{H}_{A(x_0)})(x-y-\int_{s}^{t} b(x_0,\tau)d\tau,t-s) \vert_{x_0=x} 	dy=0~~\forall~~x,s,t.
	\end{align}
	Thus, using \eqref{z49a}, \eqref{z49b} and \eqref{z49c} we have for any $ \alpha_1,\alpha_2,\alpha_4\in (0,1)$, $\alpha_4\geq \alpha_1,\alpha_1\leq 2\alpha_2$
	\begin{align}\nonumber
		&|\delta_h\na v(x,t)|\leq \int_{\mathbb{R}^d}		|(\na\mathbf{H}_{A(x_0)})|(x-y-\int_{0}^{t} b(x_0,\tau)d\tau,t)\vert_{x_0=x}|h|^{\alpha_1} |x-y|^{\alpha_4-\alpha_1}dy||v(0)||_{\dot C^{\alpha_4}}
		\\&\nonumber\quad+\int_{0}^{t}\mathbf{1}_{\frac{|h|}{t-s}\leq  1}	\int_{\mathbb{R}^d}		|(\delta_h\na\mathbf{H}_{A(x_0)})|(x-y-\int_{s}^{t} b(x_0,\tau)d\tau,t-s)\vert_{x_0=x}|x-y|^{\alpha_2}E(s)dyds\\&\quad\nonumber+\int_{0}^{t}\mathbf{1}_{\frac{|h|}{t-s}> 1}	\int_{\mathbb{R}^d}		|\na_x	\mathbf{H}_{A(x_0)}|(x-y-\int_{s}^{t} b(x_0,\tau)d\tau,t-s)\vert_{x_0=x}	|h|^{\frac{\alpha_1}{2}}|x-y|^{\alpha_2-\frac{\alpha_1}{2}}E(s)dyds
		\\&\quad\nonumber+\sum_{i=0,1}\int_{0}^{t}\mathbf{1}_{\frac{|h|}{t-s}> 1}	\int_{\mathbb{R}^d}		|\na_x	\mathbf{H}_{A(x_0)}|(x+ih-y-\int_{s}^{t} b(x_0,\tau)d\tau,t-s)\vert_{x_0=x}	|x+ih-y|^{\alpha_2}E(s)dyds\\&\quad\nonumber
		+\int_{0}^{t}\mathbf{1}_{\frac{|h|}{t-s}> 1}\int_{\mathbb{R}^d}		|(\na_x	\mathbf{H}_{A(x_0)})|(x+h-y-\int_{s}^{t} b(x_0,\tau)d\tau,t-s)\vert_{x_0=x}\\&\quad\quad\quad\quad\quad\times|x+h-y|^\kappa dy	{{ |h|^{\alpha_2-\kappa}\left(1+ ||b(s)||_{\dot C^{\alpha_2-\kappa}}\right)}} ||v(s)||_{\dot C^{1+\kappa}}ds,
	\end{align}
	where 
	\begin{align}\nonumber
		E(s):&=||f(s)||_{\dot C^{\alpha_2}}+\left(1+	||b(s)||_{\dot C^{\alpha_2}}\right)||v(s)||_{\dot C^{\alpha_1}}^{\alpha_1}||v(s)||_{\dot C^{1+\alpha_1}}^{1-\alpha_1}+||v(s)||_{ C^{\alpha_2+\kappa}}\\&\lesssim ||f(s)||_{\dot C^{\alpha_2}}+\left(1+	||b(s)||_{\dot C^{\alpha_2}}\right)||v(s)||_{\dot C^{\alpha_1}}^{\alpha_1}||v(s)||_{\dot C^{1+\alpha_1}}^{1-\alpha_1}+||v(s)||_{L^\infty}.\label{z68}
	\end{align}
	Using \eqref{z51}, we deduce
	\begin{align}\nonumber
		&	\int_{\mathbb{R}^d}		|(\delta_h\na	\mathbf{H}_{A(x)})|(y-\int_{s}^{t} b(x,\tau)d\tau,t-s)|y|^{\beta}dy\\&\nonumber\quad\quad\quad\quad\leq \int_{\mathbb{R}^d}		|(\delta_h\na	\mathbf{H}_{A(x)})|(z,t-s)(|z|+|t-s| ||b{{||}}_{L^\infty_{T}(L^\infty)} )^{\beta}dy\\&\quad\quad\quad\quad\lesssim M_1 \min\{\frac{|h|}{t-s},1\}^{1-\beta} \frac{|h|^{\beta}}{t-s},\\&
		\int_{\mathbb{R}^d}		|(\na	\mathbf{H}_{A(x)})|(y-\int_{s}^{t} b(x,\tau)d\tau,t-s)|y|^{\beta}dy\lesssim  M_1(t-s)^{-1+\beta},
	\end{align}
	for any $\beta\in (0,1)$.
	Thus, 
	\begin{align}\nonumber
		|\delta_h\na v(x,t)|&\lesssim M_1|h|^{\alpha_1} t^{-1+\alpha_4-\alpha_1}||v(0)||_{\dot C^{\alpha_4}}+M_1\int_{0}^{t}\min\{\frac{|h|}{t-s},1\}^{1-\frac{\alpha_1}{2}}	\frac{|h|^{\frac{\alpha_1}{2}}}{(t-s)^{1-\alpha_2+\frac{\alpha_1}{2}}}E(s)ds\\&
		+M_1\int_{0}^{t}\mathbf{1}_{\frac{|h|}{t-s}> 1}(t-s)^{-1+\kappa} |h|^{\alpha_2-\kappa} \left(1+ 	||b(s)||_{\dot C^{\alpha_2-\kappa}}\right) ||v(s)||_{\dot C^{1+\kappa}}ds.\label{z55}
	\end{align}
	3) {{Because $||g||_{ C^{\beta_1}}\lesssim ||g||_{ C^{\beta_2}}$  for any $0<\beta_1<\beta_2$, it is enough to prove \eqref{z31b} when $<\alpha_2-\alpha_1 \ll \alpha_1$}}.  Using \eqref{z55} with  $\alpha_4=\alpha_2> \alpha_1$ and $\kappa=\alpha_2-\alpha_1\ll \alpha_1$, we obtain
	\begin{align}\nonumber
		||v(t)||_{\dot C^{1+\alpha_1}}&\lesssim M_1 t^{-1+\alpha_2-\alpha_1}||v(0)||_{\dot C^{\alpha_2}}+M_1\int_{0}^{t}	\frac{E(s)ds}{(t-s)^{1-(\alpha_2-\alpha_1)}}\\&\quad\quad\quad\quad\nonumber
		+M_1\int_{0}^{t}\left(1+	||b(s)||_{\dot C^{\alpha_1}}\right) ||v(s)||_{\dot C^{1+\alpha_2-\alpha_1}}\frac{ds}{(t-s)^{1-(\alpha_2-\alpha_1)}}\\&\lesssim\nonumber M_1 t^{-1+\alpha_2-\alpha_1}||v(0)||_{\dot C^{\alpha_2}}+M_1\int_{0}^{t}	\frac{E(s)ds}{(t-s)^{1-(\alpha_2-\alpha_1)}}\\&\quad\quad\quad\quad
		+M_1^2\int_{0}^{t}\left(1+	||b(s)||_{\dot C^{\alpha_2}}\right)^{\frac{\alpha_1}{\alpha_2}}||v||_{\dot C^{\alpha_1}}^{2\alpha_1-\alpha_2}||v||_{\dot C^{1+\alpha_1}}^{1+\alpha_2-2\alpha_1}\frac{ds}{(t-s)^{1-(\alpha_2-\alpha_1)}}.
	\end{align}
	In the last inequality we used interpolation inequalities.\\
	Thus, in view of  \eqref{z68} we obtain
	\begin{align}\nonumber
		&||v||_{L^1_T(\dot C^{1+\alpha_1})}
		\lesssim
		M_1T^{\alpha_2-\alpha_1}\left(||v(0)||_{\dot C^{\alpha_2}}+||v||_{L^1_T(L^\infty)}+||f||_{L^1_T(\dot C^{\alpha_2})}\right)\\&+M_1^2T^{\alpha_2-\alpha_1}\left(\int_{0}^{T} (1+	||b||_{\dot C^{\alpha_2}})^{\frac{\alpha_1}{\alpha_2}}||v||_{\dot C^{\alpha_1}}^{2\alpha_1-\alpha_2}||v||_{\dot C^{1+\alpha_1}}^{1+\alpha_2-2\alpha_1}+\left(1+	||b||_{\dot C^{\alpha_2}}\right)||v||_{\dot C^{\alpha_1}}^{\alpha_1}||v||_{\dot C^{1+\alpha_1}}^{1-\alpha_1}\right).
	\end{align}
	Using Holder's inequality, we deduce
	\begin{align}\nonumber
		&||v||_{L^1_T(\dot C^{1+\alpha_1})}\lesssim
		M_1T^{\alpha_2-\alpha_1}\left(||v(0)||_{\dot C^{\alpha_2}}+||v||_{L^1_T(L^\infty)}+||f||_{L^1_T(\dot C^{\alpha_2})}\right)\nonumber\\&+M_1^{\frac{2}{2\alpha_1-\alpha_2}}T^{\frac{\alpha_2-\alpha_1}{2\alpha_1-\alpha_2}}\int_{0}^{T} (1+	||b||_{\dot C^{\alpha_2}})^{\frac{\alpha_1}{\alpha_2(2\alpha_1-\alpha_2)}}||v||_{\dot C^{\alpha_1}}+M_1^{\frac{2}{\alpha_1}}T^{\frac{\alpha_2}{\alpha_1}-1}\int_{0}^{T} \left(1+	||b||_{\dot C^{\alpha_2}}\right)^{\frac{1}{\alpha_1}}||v||_{\dot C^{\alpha_1}}\nonumber\\&\lesssim
	M_1(T+1)\left(||v(0)||_{ C^{\alpha_2}}+||v||_{L^1_T(L^\infty)}+||f||_{L^1_T( C^{\alpha_2})}\right)+M_1^{\frac{3}{\alpha_1}}(T+1)^{\frac{\alpha_2}{\alpha_1}}\int_{0}^{T} (1+	||b||_{ C^{\alpha_2}})^{\frac{\alpha_2}{\alpha_1^2}}||v||_{\dot C^{\alpha_1}}.
	\end{align}
	Here we  used  $\frac{\alpha_1}{\alpha_2(2\alpha_1-\alpha_2)},\frac{1}{\alpha_1}<\frac{\alpha_2}{\alpha_1^2}$ when 	{{$\alpha_2-\alpha_1\ll \alpha_1$}} . This implies \eqref{z31b}.\\
	4) Using \eqref{z55} with $\alpha_1=\alpha_2\leq \alpha_4/2$ and $\kappa=\frac{\alpha_1}{2}$, we have 
	\begin{align}\nonumber
		&|\delta_h\na v(x,t)|\lesssim  M_1|h|^{\alpha_1} t^{-1+\alpha_4-\alpha_1}||v(0)||_{\dot C^{\alpha_4}}+M_1\int_{0}^{t}\min\{\frac{|h|}{t-s},1\}^{1-\frac{\alpha_1}{2}}	\frac{|h|^{\frac{\alpha_1}{2}}s^{-1+\alpha_1}}{(t-s)^{1-\frac{\alpha_1}{2}}}ds\sup_{s\in [0,T]} s^{1-\alpha_1} E(s)\nonumber\\&\quad
		+M_1\left(1+ 	||b||_{L^\infty(\dot C^{\frac{\alpha_1}{2}})}\right)|h|^{\frac{\alpha_1}{2}}\int_{0}^{t}\mathbf{1}_{\frac{|h|}{t-s}> 1}(t-s)^{-1+\frac{\alpha_1}{2}}  s^{-1+\alpha_1}ds\sup_{s\in [0,T]}s^{1-\alpha_1}||v(s)||_{\dot C^{1+\frac{\alpha_1}{2}}}\nonumber\\&\lesssim M_1|h|^{\alpha_1} t^{-1+\alpha_4-\alpha_1}||v(0)||_{\dot C^{\alpha_4}}+M_1|h|^{\alpha_1} t^{-1+\alpha_1}\sup_{s\in [0,T]} s^{1-\alpha_1} E(s)\nonumber\\&\quad
		+M_1\left(1+ 	||b||_{L^\infty(\dot C^{\frac{\alpha_1}{2}})}\right)|h|^{\alpha_1} t^{-1+\alpha_1}||v||_{L^\infty_{T}(\dot C^{\alpha_1})}^{\frac{\alpha_1}{2}}\sup_{s\in [0,T]}s^{1-\alpha_1}||v(s)||_{\dot C^{1+\alpha_1}}^{1-\frac{\alpha_1}{2}},
	\end{align}
	where $E(s)$ satisfies \eqref{z68} with $\alpha_2=\alpha_1$.\\
	Thus, 
	\begin{align}\nonumber
		&\sup_{s\in [0,T]}s^{1-\alpha_1}	||v(s)||_{\dot C^{1+\alpha_1}}\lesssim M_1 T^{\alpha_4-2\alpha_1}||v(0)||_{\dot C^{\alpha_4}}+M_1 T^{1-\alpha_1} ||v||_{L^\infty_{T}(L^\infty)}+M_1\sup_{s\in [0,T]} s^{1-\alpha_1} ||f(s)||_{\dot C^{\alpha_1}}\\&+M_1T^{\alpha_1(1-\alpha_1)}\left(1+	||b||_{L^\infty_{T}(\dot C^{\alpha_1})}\right)||v||_{L^\infty_{T}(\dot C^{\alpha_1})}^{\alpha_1}\left(\sup_{s\in [0,T]}s^{1-\alpha_1}||v(s)||_{\dot C^{1+\alpha_1}}\right)^{1-\alpha_1}\nonumber\\&\quad
		+M_1^{\frac{3}{2}}T^{\frac{\alpha_1(1-\alpha_1)}{2}}\left(1+ 	||b||_{L^\infty(\dot C^{\alpha_1})}\right)^{\frac{1}{2}}||v||_{L^\infty_{T}(\dot C^{\alpha_1})}^{\frac{\alpha_1}{2}}\left(\sup_{s\in [0,T]}s^{1-\alpha_1}||v(s)||_{\dot C^{1+\alpha_1}}\right)^{1-\frac{\alpha_1}{2}}.\label{z72}
	\end{align}
	Here we used the interpolation inequality $\left(1+ 	||b||_{L^\infty(\dot C^{\frac{\alpha_1}{2}})}\right)\lesssim M_1^{\frac{1}{2}}\left(1+ 	||b||_{L^\infty(\dot C^{\alpha_1})}\right)^{\frac{1}{2}}$.\\
	Using Holder's inequality, 
	\begin{align}\nonumber
		&\sup_{s\in [0,T]}s^{1-\alpha_1}	||v(s)||_{\dot C^{1+\alpha_1}}\lesssim M_1 T^{\alpha_4-2\alpha_1}||v(0)||_{\dot C^{\alpha_4}}+M_1 T^{1-\alpha_1} ||v||_{L^\infty_{T}(L^\infty)}+M_1\sup_{s\in [0,T]} s^{1-\alpha_1} ||f(s)||_{\dot C^{\alpha_1}}\\&
		+M_1^{\frac{3}{\alpha_1}}T^{1-\alpha_1}\left(1+ 	||b||_{L^\infty(\dot C^{\alpha_1})}\right)^{\frac{1}{\alpha_1}}||v||_{L^\infty_{T}(\dot C^{\alpha_1})}.\label{z69}
	\end{align}
	This implies \eqref{z58} by taking {{$\alpha_4=3\alpha_1$.}}\\
	5) It follows from \eqref{z70}, \eqref{z49a} and \eqref{z49b} that for $\alpha_1=\alpha_2$
	\begin{align}\nonumber
		&	|\delta_hv(x,t)|\lesssim\int_{\mathbb{R}^d}		\mathbf{H}_{A(x_0)}(x-y-\int_{0}^{t} b(x,\tau)d\tau,t)\vert_{x_0=x} |\delta_{-h}v(y,0)|dy\\&\nonumber+\int_{0}^{t}	\int_{\mathbb{R}^d}			|\delta_h\mathbf{H}_{A(x_0)}(x-y-\int_{s}^{t} b(x,\tau)d\tau,t-s)\vert_{x_0=x}||y-x|^{\alpha_1}dyE(s)ds\\&\overset{\eqref{z51}}\lesssim |h|^{2\alpha_1}||v(0)||_{\dot C^{2\alpha_1}}+ M_1 \int_{0}^{t} |h|^{\alpha_1}\min\{\frac{|h|}{t-s},1\}^{1-\alpha_1}E(s)ds\nonumber
		\\&\nonumber\lesssim |h|^{2\alpha_1}||v(0)||_{\dot C^{2\alpha_1}}+ M_1 \int_{0}^{t} |h|^{\alpha_1}\min\{\frac{|h|}{t-s},1\}^{1-\alpha_1}s^{-1+\alpha_1} ds \sup_{s\in [0,T]} s^{1-\alpha_1} E(s)
		\\&\sim |h|^{2\alpha_1}\left(||v(0)||_{\dot C^{2\alpha_1}}+ M_1  \sup_{s\in [0,T]} s^{1-\alpha_1} E(s)\right),
	\end{align}
	where  $E(s)$ satisfies \eqref{z68} with $\alpha_2=\alpha_1$.\\
	Therefore, as in \eqref{z72} we obtain 
	\begin{align}\nonumber
		&	||v||_{L^\infty_{T}(\dot C^{2\alpha_1})}\lesssim	||v(0)||_{\dot C^{2\alpha_1}}+M_1 T^{1-\alpha_1} ||v||_{L^\infty_{T}(L^\infty)}+M_1\sup_{s\in [0,T]} s^{1-\alpha_1} ||f(s)||_{\dot C^{\alpha_1}}\\&+M_1T^{\alpha_1(1-\alpha_1)}\left(1+	||b||_{L^\infty_{T}(\dot C^{\alpha_1})}\right)||v||_{L^\infty_{T}(\dot C^{\alpha_1})}^{\alpha_1}\left(\sup_{s\in [0,T]}s^{1-\alpha_1}||v(s)||_{\dot C^{1+\alpha_1}}\right)^{1-\alpha_1}.
	\end{align}
	Combining this with \eqref{z69} (for $\alpha_4=2\alpha_2$) and Holder's inequality, we deduce
	\begin{align}\nonumber
		&||v||_{L^\infty_{T}(\dot{ C}^{2\alpha_1})}+\sup_{s\in [0,T]}s^{1-\alpha_1}	||v(s)||_{\dot C^{1+\alpha_1}}\lesssim M_1 ||v(0)||_{\dot {{ C^{2\alpha_1}}}}+M_1 T^{1-\alpha_1} ||v||_{L^\infty_{T}(L^\infty)}\\&+M_1\sup_{s\in [0,T]} s^{1-\alpha_1} ||f(s)||_{\dot C^{\alpha_1}}
		+M_1^{\frac{3}{\alpha_1}}T^{1-\alpha_1}\left(1+	||b||_{L^\infty(\dot C^{\alpha_1})}\right)^{\frac{1}{\alpha_1}}||v||_{L^\infty_{T}(\dot C^{\alpha_1})}.
	\end{align}
	Now \eqref{z57} follows because $||v||_{L^\infty_{T}(\dot C^{\alpha_1})}\lesssim ||v||_{L^\infty_{T}(L^\infty)}^{\frac{1}{2}}||v||_{L^\infty_{T}(\dot C^{2\alpha_1})}^{\frac{1}{2}}$  {{ and Holder's inequality.}}
\end{proof}

\section{The Nonlinear Maximum Principle}\la{nlmp}
We consider a solution $\theta$ of \eqref{sqg} defined on an interval of time $[0,T]$. In this section we assume that the solution belongs to 	{{$C^{1+\alpha_0}([0,T]\times\Omega)$}} for some $0<\alpha_0<1$. The local existence Theorem \eqref{locex} 	{{guarantees}} this to be the case if $T$ is small. Here we obtain the basic a priori estimates which allow the continuation of the solution.
We consider one of the boundary or interior balls $1\le i\le N$, and take the function $\theta_i =\chi_i^0\theta $ as the basic variable. The extended localized equation \eqref{widesqg}  for $i\le N_1$ and its interior counterpart for $N_1+ 1 \le i\le N$,  \eqref{z35a1}, are both represented below by the equation

\begin{align}\label{sqg4}
\partial_t \theta+ u\cdot\na\theta +L_1^{\fr12}\theta=f
\end{align}
in $\mathbb{R}^{d}$, $d\geq 2$, where $\theta=\eta \tilde{\theta}$ and $u$ obeys
\be
u =  \mathbf{J}\na L_2^{-\fr12} (\tilde{\theta})+u_1.
\la{u4}
\ee
with $u_1$ a lower order term, and where
  \begin{description}
	\item[i] $\mathbf{J}=\mathbf{J}(x)$ is a matrix valued function which satisfies 
	\begin{equation}\label{z89}
		||\mathbf{J}||_{W^{1,\infty}(\mathbb{R}^d)}\lesssim 1
	\end{equation}
\item[ii] $L_j=-\operatorname{div}(A_j\na.), j=1,2$, $A_1,A_2$ are  symmetric matrix valued functions in $\mathbb{R}^d$ which  satisfy \eqref{z65a} and \eqref{z65b}; 
\item[iii]$\eta$ is a Lipschitz cutoff function $\eta=1$ in $B(0,r_0)$ and $\eta=0$ in $B(0,2r_0)^c.$  
\end{description}

 We reiterate that in this section we use the name $\theta$, but this variable corresponds to one of the $\theta_i$ and not to the solution $\theta$ of \eqref{sqg}, \eqref{u}.
 In \eqref{u4} have implicitly assumed that $\widetilde\theta$ is known. In the application to \eqref{widesqg}, $\widetilde\theta$ is compactly supported obtained from the solution of \eqref{sqg}, \eqref{u} by the formula  \eqref{overtheta}. 
 
We apply the method of nonlinear maximum principle introduced in  \cite{cv1} for the whole space, used in \cite{cvt} in the periodic case and in \cite{sqgb} to establish interior H\"{o}lder bounds in bounded domains. In this section we do not explicitly use the divergence-free property of velocity $u$.  We expand here the range of applicability of the method to allow for nonlinear forcing and the absence of translation invariance. We make use of the following result (\cite[Lemma B.1]{cvt}).

\begin{lemma}\label{consttarvic}Let $\Upsilon(t,z): [0,T]\times \mathbb{R}^m\to [0,\infty),{{m\in \mathbb{N}}}$ be such that  $\text{supp}\Upsilon(t,.)\subset B_R\subset \mathbb{R}^m$ for any $t\in [0,T]$ and for some $R>0$. Assume that $\Upsilon(t,z)\in C^{1}([0,T],C^{\beta}( \mathbb{R}^m))\cap C^{\beta}([0,T],C^{1}( \mathbb{R}^m))$ for some $\beta\in (0,1)$. Let  $\varrho(t)=\sup_{z}\Upsilon(t,z)$ for any $t\in [0,T]$.  Then, $\varrho$ is Lipschitz continuous in $[0,T]$ with 
\begin{align}
||\varrho||_{Lip([0,T])}\leq  ||\partial_t\Upsilon||_{L^\infty([0,T]\times\mathbb{R}^m)},
\end{align}	
for almost every $t\in [0,T]$ the function $\varrho$ is differentiable at $t$ and there exists $z(t)\in \mathbb{R}^m$ such that 
simultaneously
\begin{equation}
\frac{d}{dt}\varrho(t)=(\partial_t\Upsilon)(t,z(t))~~\text{and}~~\varrho(t)=\Upsilon(t,z(t))
\end{equation}
hold. In particular, $\varrho$ is  absolutely continuous functions on $[0,T]$ and 
\begin{equation}\label{z80}
\varrho(t_2)=\varrho(t_1)+\int_{t_1}^{t_2}\frac{d}{dt}\varrho(t) dt
\end{equation}
for any $0\leq t_1<t_2\leq T$. 
\end{lemma}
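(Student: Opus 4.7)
The plan is to combine three standard ingredients: compactness of the support of $\Upsilon(t,\cdot)$ to guarantee that the supremum is attained for every $t$; an envelope inequality to obtain Lipschitz continuity of $\varrho$; and a one-sided differentiation argument at points of differentiability to identify $\varrho'(t)$ with $\partial_t\Upsilon(t,z(t))$.

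First I would observe that, for each $t\in[0,T]$, the function $\Upsilon(t,\cdot)$ is continuous (since it lies in $C^\beta(\mathbb{R}^m)$) and is supported in the compact set $\overline{B_R}$, hence attains its maximum at some (not necessarily unique) point $z(t)\in\overline{B_R}$. For any $t_1,t_2\in[0,T]$ the inequality $\varrho(t_1)\ge\Upsilon(t_1,z(t_2))$ gives the envelope estimate
\[
\varrho(t_2)-\varrho(t_1)\le\Upsilon(t_2,z(t_2))-\Upsilon(t_1,z(t_2))=\int_{t_1}^{t_2}\partial_t\Upsilon(\tau,z(t_2))\,d\tau\le\|\partial_t\Upsilon\|_{L^\infty}|t_2-t_1|,
\]
and interchanging the roles of $t_1$ and $t_2$ yields the reverse inequality. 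Thus $\varrho$ is Lipschitz on $[0,T]$ with constant at most $\|\partial_t\Upsilon\|_{L^\infty([0,T]\times\mathbb{R}^m)}$; being Lipschitz on an interval, $\varrho$ is absolutely continuous, and \eqref{z80} is the standard fundamental theorem of calculus applied to $\varrho$.

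Next, since a Lipschitz function on an interval is differentiable almost everywhere, $\varrho$ is differentiable at a.e.\ $t_0\in[0,T]$. At such a point I would pick any maximizer $z_0=z(t_0)$. For any $h$ with $t_0+h\in[0,T]$, the bound $\varrho(t_0+h)\ge\Upsilon(t_0+h,z_0)$ together with the equality $\varrho(t_0)=\Upsilon(t_0,z_0)$ yields
\[
\varrho(t_0+h)-\varrho(t_0)\ge\Upsilon(t_0+h,z_0)-\Upsilon(t_0,z_0).
\]
Because $\Upsilon(\cdot,z_0)\in C^1([0,T])$, dividing by $h>0$ and sending $h\to 0^+$ gives $\varrho'(t_0)\ge\partial_t\Upsilon(t_0,z_0)$, while dividing by $h<0$ and sending $h\to 0^-$ reverses the inequality, producing the claimed identity.

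There is no serious obstacle here; the result is essentially Danskin's envelope principle in a one-dimensional parameter. The only points requiring mild care are that $z(t)$ need not be unique or measurably selected in $t$ — but the two-sided bounds above are insensitive to that choice — and that the compact support assumption is exactly what turns the supremum into an attained maximum, so that the envelope inequality upgrades to a uniform Lipschitz estimate. I would also note that the hypothesis $\Upsilon\in C^\beta([0,T],C^1(\mathbb{R}^m))$ plays no role in this lemma; the statement uses only the bound on $\|\partial_t\Upsilon\|_{L^\infty}$ supplied by $\Upsilon\in C^1([0,T],C^\beta(\mathbb{R}^m))$ combined with the compact $z$-support.
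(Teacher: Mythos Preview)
Your proof is correct. The paper does not supply its own argument for this lemma; it simply invokes \cite[Lemma~B.1]{cvt}, so there is nothing to compare against beyond noting that what you wrote is precisely the standard envelope/Danskin argument one expects (and essentially what appears in that reference). Your side remark that the hypothesis $\Upsilon\in C^{\beta}([0,T],C^{1}(\mathbb{R}^m))$ is not used in the lemma itself is also accurate; that regularity is exploited elsewhere in the paper (e.g.\ to make sense of $\nabla_{x,h}$ vanishing at the maximum in the nonlinear maximum principle), not in the proof of this statement.
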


\begin{lemma} \la{drivencalpha}Assume that $\theta\in C^{1+\alpha_0}([0,T],C^{\alpha_0}(\mathbb{R}^d))\cap {{C^{\alpha_0}}}([0,T],C^{1+\alpha_0}(\mathbb{R}^d))$, for some $\alpha_0\in (0,1)$ is a solution of \eqref{sqg4}-\eqref{u4}.
Then
  \begin{align}\nonumber
&	\sup_{t\in [0,T]}||\theta(t)||_{\dot C^\alpha}^{\frac{1}{\alpha}-\frac{\alpha}{2-\alpha}} +{{||\theta||_{L^\infty_{T}(L^\infty)}^{-\frac{1}{\alpha}}}}\int_{0}^{T} ||\theta||_{\dot C^\alpha}^{\frac{2}{\alpha}-\frac{\alpha}{2-\alpha}} \lesssim ||\theta_0||_{\dot C^\alpha}^{\frac{1}{\alpha}-\frac{\alpha}{2-\alpha}}+M\int_{0}^{T}||\theta||_{\dot C^{1+\frac{\alpha}{2}}}\\&+M\left(||\tilde \theta||_{L^1_T( L^\infty\cap L^1)}+\int_{0}^{T}||\tilde \theta||_{\dot C^{\alpha}}^{\frac{1}{\alpha}-\frac{\alpha}{2-\alpha}}+\int_{0}^{T}||\tilde \theta||_{\dot C^{\frac{3}{4}}} ||\tilde \theta||_{\dot C^{\frac{1}{4}}}+\int_{0}^{T}||u_1||_{\dot C^{\frac{1+2\alpha}{3}}}^6+\int_{0}^{T}||f||_{\dot C^{\alpha}}^2\right)\label{z28}
\end{align}
holds for $\alpha$ such that  $\alpha(1+||\tilde{\theta}||_{L^\infty_{T}(L^\infty)})$ is small enough. Above, the constant $M$ is given by 
 $$M=(||\tilde \theta||_{L^\infty_T(L^1\cap L^\infty)}+1)^{\frac{2}{\alpha}}.$$
\end{lemma}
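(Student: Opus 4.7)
The plan is to apply the pointwise nonlinear maximum principle to the squared normalized finite difference $\Upsilon(t,x,h) = (\delta_h\theta(x,t))^2/|h|^{2\alpha}$. Because $\theta = \eta\tilde\theta$ is compactly supported and $\Upsilon$ is bounded when $|h|$ is bounded away from zero, Lemma~\ref{consttarvic} applies: $\varrho(t) = \sup_{x,h}\Upsilon(t,x,h) = \|\theta(t)\|_{\dot C^\alpha}^2$ is Lipschitz, and at a.e.~$t$ there is a maximizer $(x^*,h^*)$ where $\frac{d}{dt}\varrho$ equals the pointwise derivative of $\Upsilon$. Writing $q(x) = \delta_{h^*}\theta(x,t)$, one has $\|q\|_{L^\infty} = |q(x^*)|$ and $|q(x^*)|/|h^*|^\alpha = \sqrt{\varrho}$.

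Differentiating along \eqref{sqg4}, the advective contribution at the maximizer reduces to $-q(x^*)\,\delta_{h^*}u(x^*)\cdot\nabla\theta(x^*+h^*)/|h^*|^{2\alpha}$, since $\nabla_x(q^2)(x^*) = 0$ at an interior maximum, and the diffusion splits as $-L_1^{1/2}q(x^*) - C(x^*,h^*)$, where $C$ captures the failure of translation invariance of the variable-coefficient operator $L_1^{1/2}$. Positivity of the heat kernel of $L_1$ gives $L_1^{1/2}(q^2)(x^*)\geq 0$, hence $q(x^*)L_1^{1/2}q(x^*) \geq D_{L_1}(q)(x^*)$. The nonlinear lower bound in the spirit of \cite{cv1}, adapted to $L_1^{1/2}$ via \eqref{z87a} and $\|q\|_{L^\infty}\leq 2\|\tilde\theta\|_{L^\infty}$, gives $D_{L_1}(q)(x^*) \geq c|q(x^*)|^3/\|\tilde\theta\|_{L^\infty}$. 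Crucially, at the maximizer the identity $|h^*|^\alpha = |q(x^*)|/\sqrt{\varrho}$ forces $|h^*|$ to be small whenever $\varrho$ is large; optimizing the Cordoba-type split with this $|h|$-smallness produces the enhanced damping announced in \eqref{dqenhanced},
\[\frac{D_{L_1}(q)(x^*)}{|h^*|^{2\alpha}}\;\gtrsim\;\frac{\varrho^{\,1+1/(2\alpha)}}{\|\tilde\theta\|_{L^\infty}^{1/\alpha}}.\]

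The main obstacle is the control of $\delta_{h^*}u(x^*)$. With $u = \mathbf{J}\nabla L_2^{-1/2}\tilde\theta + u_1$, split the principal Riesz-type piece by its kernel support into an inner core on $\{|z|\lesssim|h^*|\}$ and an outer tail. Paired with $\nabla\theta(x^*+h^*)$ through the local difference-quotient structure as in \cite{cv1,cvt,sqgb}, the inner core is pointwise dominated by a small fraction of $D_{L_1}(q)(x^*)/|h^*|^{2\alpha}$ and absorbed into the enhanced damping; this absorption succeeds precisely because $\alpha(1+\|\tilde\theta\|_{L^\infty})$ is assumed small. The outer tail produces the H\"older product $\|\tilde\theta\|_{C^{3/4}}\|\tilde\theta\|_{C^{1/4}}$ of \eqref{z28}, while the lower-order $u_1$ is bounded by $\|u_1\|_{C^{(1+2\alpha)/3}}$ and absorbed by Young's inequality at the dual exponent of the enhanced damping, giving the sixth power. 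The commutator $C(x^*,h^*)$ is controlled by the heat-kernel difference estimates of Appendix~2 together with \eqref{z87b}, producing the factor $\|\theta\|_{\dot C^{1+\alpha/2}}$; this is the unavoidable loss of almost one whole derivative due to the variable coefficients of $L_1^{1/2}$. The forcing contributes $\sqrt{\varrho}\,\|f\|_{C^\alpha}$, which Young turns into the $\|f\|_{C^\alpha}^2$ term.

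Combining every piece gives a pointwise differential inequality
\[\frac{d}{dt}\varrho + c\,\|\tilde\theta\|_{L^\infty}^{-1/\alpha}\,\varrho^{\,1+\frac{1}{2\alpha}} \;\leq\; \text{(explicit errors)}.\]
Multiplying by $\gamma\varrho^{\gamma-1}$ with $\gamma = \frac{1}{\alpha}-\frac{\alpha}{2-\alpha}$ converts the damping into $c'\|\tilde\theta\|_{L^\infty}^{-1/\alpha}\varrho^{\gamma+1/(2\alpha)} = c'\|\tilde\theta\|_{L^\infty}^{-1/\alpha}\varrho^{\frac{2}{\alpha}-\frac{\alpha}{2-\alpha}}$, matching the second summand on the left of \eqref{z28}. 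Integrating from $0$ to $T$ and Young-absorbing each error at the precise H\"older exponent indicated on the right of \eqref{z28}, with the constants uniformly controlled by $M=(1+\|\tilde\theta\|_{L^\infty_T(L^1\cap L^\infty)})^{2/\alpha}$ thanks to the smallness of $\alpha(1+\|\tilde\theta\|_{L^\infty})$, yields \eqref{z28}.
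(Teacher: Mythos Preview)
Your overall architecture matches the paper's: apply Lemma~\ref{consttarvic} to the squared difference quotient on $(x,h)$-space, extract the enhanced nonlinear damping \eqref{dqenhanced} from the smallness of $|h^*|$ at the maximizer, bound the commutator $[L_1^{1/2},\delta_h]$ via \eqref{z2}, and close by multiplying through by the appropriate power and integrating. The treatment of $u_1$, of $f$, and of the commutator are in the right spirit.

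There is, however, a genuine gap in your handling of the principal advection term. You write the surviving contribution at the maximizer as $q(x^*)\,(\delta_{h^*}u)(x^*)\cdot\nabla\theta(x^*+h^*)/|h^*|^{2\alpha}$, invoking only $\nabla_x(q^2)(x^*)=0$, and then claim that the inner core of $\delta_h u$, when ``paired with $\nabla\theta(x^*+h^*)$'', is dominated by a small fraction of $D(q)/|h^*|^{2\alpha}$. As written the absorption would fail: there is no $\alpha$-prefactor on your term, and an inequality of the form $|\delta_h\nabla L_2^{-1/2}\tilde\theta|\,|\nabla\theta|\,|\delta_h\theta|\lesssim \text{(small)}\,D(\delta_h\theta)$ is simply not available. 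The crucial $\alpha$ comes from the \emph{second} criticality condition at the maximizer, $\nabla_h\Upsilon(x^*,h^*)=0$, which you never invoke. In the paper's formulation one writes the equation for the normalized quotient $D_h^\alpha\theta$ with $h$ treated as an independent variable; the chain rule on $|h|^{-\alpha}$ produces the extra term $-\alpha(\delta_h u\cdot h/|h|^2)D_h^\alpha\theta$ in \eqref{z13}, and at the $(x,h)$-maximizer the $h$-transport vanishes, leaving only this term carrying the explicit factor $\alpha$. Equivalently, in your variables $\nabla_h\Upsilon=0$ forces $\nabla\theta(x^*+h^*)=\alpha\,\delta_{h^*}\theta(x^*)\,h^*/|h^*|^2$, which converts your $\nabla\theta$ into $\alpha\,q(x^*)/|h^*|$ and recovers the quantity $\alpha|h^*|^{-1-2\alpha}|\delta_{h^*}u|\,|\delta_{h^*}\theta|^2$. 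It is \emph{this} object that Corollary~\ref{threquarts} bounds by $C\|\tilde\theta\|_{L^\infty}|h|^{-2\alpha}D(\delta_h(\eta\tilde\theta))$ plus the lower-order pieces (including the $\|\tilde\theta\|_{\dot C^{3/4}}\|\tilde\theta\|_{\dot C^{1/4}}$ term from \eqref{weird}), and it is the prefactor $\alpha$ that makes the absorption into $D$ succeed under the hypothesis $\alpha(1+\|\tilde\theta\|_{L^\infty})\ll1$. A minor bookkeeping point: since your $\varrho=\|\theta\|_{\dot C^\alpha}^2$ is the square of the paper's supremum, the final multiplier should be $\gamma=\tfrac12\bigl(\tfrac1\alpha-\tfrac{\alpha}{2-\alpha}\bigr)$, not $\tfrac1\alpha-\tfrac{\alpha}{2-\alpha}$, in order to land on the exponents of \eqref{z28}.
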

\begin{proof}
We take $0<\alpha\leq \frac{\alpha_0}{16}$ and consider the equation obeyed by $D^{\alpha}_h \theta=\frac{\delta_h\theta}{|h|^\alpha}$ with $\delta_h f  = f(x+h)-f(x)$.
We apply first the finite difference $\delta_h$: 
\be
(\pa_t +u \cdot \na_x +  (\delta_h u)\cdot\na_h +  L_1^{\fr12})\delta_h\theta =  L_1^{\fr12}\delta_h\theta-\delta_h (L_1^{\fr12}\theta) + \delta_h f.
\la{delhbarteq}
\ee
Then we obtain the equation obeyed by  
$
q(t,x,h) = D_h^\alpha \theta:$
\begin{align}\label{z13}
(\pa_t + u\cdot \na_x +  (\delta_h u)\cdot\na_h +  L_1^{\fr12}) q = L_1^{\fr12} D_h^\alpha \theta-D_h^\alpha  (L_1^{\fr12}\theta)+D_h^\alpha f - \alpha \left(\delta_h u\cdot {\fr{h}{|h|^2}}\right) q.
\end{align}
We multiply \eqref{z13} by $q$ and use the quadratic difference lower bound \eqref{z1}{{,\eqref{z16}}}, where $D(v)$ is defined in \eqref{Ddefi}, and \eqref{z89}, to obtain 
\begin{align}\nonumber
&	\frac{1}{2}(\pa_t + u\cdot \na_x +  (\delta_h u)\cdot\na_h + L_1^{\fr12})( q^2) +c||\theta||_{L^\infty}^{-1}|h|^{-1+\alpha} |q|^{3}+c |h|^{-2\alpha}D(\delta_h(\eta\tilde \theta))\\&\leq   \frac{\alpha}{c} \left(|h|^{-1-2\alpha} |\delta_h \na L_2^{-\fr12} (\tilde{\theta})|(\delta_h(\eta\tilde \theta))^2+ ||\na L_2^{-\fr12} (\tilde{\theta})||_{L^\infty}q^2\right) + |q||L_1^{\fr12} D_h^\alpha \theta-D_h^\alpha  (L_1^{\fr12}\theta)|\nonumber\\&\quad\quad\quad\quad\quad\quad\quad\quad+q|D_h^\alpha f |+\alpha |h|^{-1} ||\delta_hu_1||_{L^\infty}q^2,\label{z13b}
\end{align}
where the constant $c>0$ does not depend on $\alpha$.\\
Note that  
\begin{equation}
\sup_{h,x\in \mathbb{R}^d} q(t,x,h)^2=\sup_{h,x\in \mathbb{R}^d} \left[ \varphi_1(x)  \varphi_2(h)q(t,x,h)^2\right]
\end{equation}
where  $\varphi_1, \varphi_2$ {{are}} cut-off functions such that $\varphi_1=1$ in  $B(0,\lambda_0)$ and  $\varphi_2=1$ in $B(0,\lambda_0)\backslash B(0,1/\lambda_0)$; $\varphi_1=\varphi_2=0$ in  {{$\mathbb{R}^d\backslash B(0,2\lambda_0)$, $\varphi_2=0$  in  $ B(0,1/(2\lambda_0))$}} for some $\lambda_0\geq 1$ large enough.\\
Thus, we apply  Lemma \ref{consttarvic} to deduce that there exists  $(x_t,h_t)\in \mathbb{R}^d\times  \mathbb{R}^d$ such that 
\begin{equation}
q(t):=q(t, x_t,h_t)=\sup_{h,x}q(t,x,h)\in W^{1,\infty}([0,T]).
\end{equation} 
Moreover,  $q$ is an absolutely continuous function on $[0,T]$ and
\begin{equation}\label{z79}
\frac{d}{dt}q(t)=(\partial_tq)(t, x_t,h_t) ~~\text{a.e. in }~~t\in [0,T].
\end{equation}
We take great advantage of the fact that
\begin{align}\label{z26}
0<|h_t|\leq \min\left\{ 4r_0, \left(\frac{2||\theta||_{L^\infty}}{q(t)}\right)^{\frac{1}{\alpha}}\right\},~~~q(t)\geq c r_0^{-\alpha} ||\theta(t)||_{L^\infty}.
\end{align}
In \eqref{z13b}, we take $(x,h)=(x_t,h_t)$ and use \eqref{z79} to obtain that 
\begin{align}\nonumber
	q\pa_t q +\frac{c}{2}||\theta||_{L^\infty}^{-1}|h|^{-1+\alpha} |q|^{3}&+c |h|^{-2\alpha}D(\delta_h(\eta\tilde \theta))\\&\nonumber\leq  \frac{\alpha}{c}|h|^{-1-2\alpha} |\delta_h \na L_2^{-\fr12} (\tilde{\theta})|(\delta_h(\eta\tilde \theta))^2+C|h|^{2(1-\alpha)}||\na L_2^{-\fr12} (\tilde{\theta})||_{L^\infty}^{3}||\theta||_{L^\infty}^2 \\&\quad+ |q||L_1^{\fr12} D_h^\alpha \theta-D_h^\alpha  (L_1^{\fr12}\theta)|+q||f||_{\dot C^{\alpha}}+{{\alpha^3 C ||\theta||_{L^\infty}^2||u_1||_{\dot C^{\frac{1+2\alpha}{3}}}^3}}.\label{z13c}
\end{align}
Here we have used Holder's inequalities
\begin{equation}
	\frac{\alpha}{c}||\na L_2^{-\fr12} (\tilde{\theta})||_{L^\infty}q^2\leq {{\frac{c}{4}}}||\theta||_{L^\infty}^{-1}|h|^{-1+\alpha} |q|^{3}+C|h|^{2(1-\alpha)}||\na L_2^{-\fr12} (\tilde{\theta})||_{L^\infty}^{3} {{||\theta||_{L^\infty}^2,}}
\end{equation}
and 
{{\begin{align}
	\alpha	 |h|^{-1} ||\delta_hu_1||_{L^\infty}q^2\leq \frac{c}{4}||\theta||_{L^\infty}^{-1}|h|^{-1+\alpha} |q|^{3} {{+ (\frac{4}{c})^2\alpha^3 ||\theta||_{L^\infty}^2||u_1||_{\dot C^{\frac{1+2\alpha}{3}}}^3.}}
\end{align}}}
{{We invoke Corollary \ref{threquarts} with the estimate \eqref{z25} to obtain
\begin{align}
	&|h|^{-1-2\alpha} |\delta_h \na L_2^{-\fr12} (\tilde{\theta})| (\delta_h(\eta\tilde \theta))^2\leq C ||\tilde \theta||_{L^\infty}	|h|^{-2\alpha}D(\delta_h (\eta\tilde \theta))+C||\tilde \theta||_{L^\infty}||\tilde \theta||_{\dot C^{\alpha}}^2\nonumber\\&\quad\quad\quad+C|h|^{2-2\alpha}||\tilde \theta||_{L^\infty} ||\tilde \theta||_{\dot C^{\frac{3}{4}}} ||\tilde \theta||_{\dot C^{\frac{1}{4}}}+C_\alpha|h|^{1-2\alpha}( ||\tilde \theta||_{L^\infty\cap L^1}+1)||\tilde \theta||_{L^\infty\cap L^1}^2\log(2+||\tilde \theta||_{\dot C^\alpha})\nonumber
	\\&\nonumber\quad\quad\leq C ||\tilde \theta||_{L^\infty}	|h|^{-2\alpha}D(\delta_h (\eta\tilde \theta))+C|h|^{2-2\alpha}||\tilde \theta||_{L^\infty} ||\tilde \theta||_{\dot C^{\frac{3}{4}}} ||\tilde \theta||_{\dot C^{\frac{1}{4}}}\\&\quad\quad\quad+C_\alpha(1+r_0)( ||\tilde \theta||_{L^\infty\cap L^1}+1)^2||\tilde \theta||_{L^\infty\cap L^1}(1+||\tilde \theta||_{\dot C^{\alpha}}^2),\label{z93}
\end{align}
where $C$ does not depend on $\alpha$, but $C_\alpha$ depends on $\alpha$.\\ 
We emphasize the key point that in the above estimate the constant $C$ in front of $||\tilde \theta||_{L^\infty}	|h|^{-2\alpha}D(\delta_h (\eta\tilde \theta))$ does not depend on $\alpha$. Thus, crucially,  if $\alpha$ is small enough, the corresponding term, which comes in \eqref{z13c} multiplied by $\fr{\alpha}{c}$, and is the most dangerous term, is less than a fraction of the term provided by the dissipation $D(q)$.\\
On the other hand, using  \eqref{z2}, \eqref{z27b} and interpolation inequality, we also have
\begin{align}
|L_1^{\fr12} D_h^\alpha \theta-D_h^\alpha  (L_1^{\fr12}\theta)|\lesssim C_\alpha |h|^{1-\alpha}q^{\frac{\alpha}{2-\alpha}} ||\theta||_{\dot C^{1+\frac{\alpha}{2}}}^{\frac{2(1-\alpha)}{2-\alpha}}+C_\alpha|h|^{1-\alpha}||\theta||_{L^1},
\end{align}
and 
\begin{align}\nonumber
	||\na L_2^{-\fr12} (\tilde{\theta})||_{L^\infty}^3&\leq C_\alpha ||\na L_2^{-\fr12} (\tilde{\theta})||_{C^{\frac{\alpha}{3}}}^3\leq C_\alpha \left(||\tilde{\theta}||_{C^{\frac{\alpha}{3}}}+||\tilde{\theta}||_{L^1}\right)^3 \\&\leq\nonumber C_\alpha||\tilde \theta||_{L^\infty\cap L^1}^{2}(||\tilde \theta||_{ L^1}+||\tilde \theta||_{\dot C^{\alpha}})\\&\leq C_\alpha( ||\tilde \theta||_{L^\infty\cap L^1}+1)^2||\tilde \theta||_{L^\infty\cap L^1}(1+||\tilde \theta||_{\dot C^{\alpha}}).
\end{align}}}
Let $\alpha\in (0,1)$ be small enough such that 
\begin{align}
C	\alpha ||\tilde{\theta}||_{L^\infty_{t,x}}+\alpha
\leq {{\frac{\min\{c,1\}}{8}}}.\end{align}
We obtain  \begin{align}\nonumber
	q\pa_t q +\frac{c}{2}||\theta||_{L^\infty}^{-1}|h|^{-1+\alpha} |q|^{3}&\lesssim ( ||\tilde \theta||_{L^\infty\cap L^1}+1)^4||\tilde \theta||_{L^\infty\cap L^1}(1+||\tilde \theta||_{\dot C^{\alpha}}^2)\\&\quad\nonumber+|h|^{2-2\alpha}||\tilde \theta||_{L^\infty} ||\tilde \theta||_{\dot C^{\frac{3}{4}}} ||\tilde \theta||_{\dot C^{\frac{1}{4}}}+|h|^{1-\alpha}q^{\frac{2}{2-\alpha}}||\theta||_{ C^{1+\frac{\alpha}{2}}}^{\frac{2(1-\alpha)}{2-\alpha}}\\&\quad+|h|^{1-\alpha}q||\theta||_{L^1\cap L^\infty}+q||f||_{\dot C^{\alpha}}+||\theta||_{L^\infty}^2||u_1||_{\dot C^{\frac{1+2\alpha}{3}}}^3.
\end{align}
Using  $|h_t|\leq  \left(\frac{2||\theta||_{L^\infty}}{q(t)}\right)^{\frac{1}{\alpha}}$, we deduce 
\begin{align}\nonumber
	q\pa_t q +c'||\theta||_{L^\infty}^{-\frac{1}{\alpha}} |q|^{\frac{1}{\alpha}+2}&\lesssim ( ||\tilde \theta||_{L^\infty\cap L^1}+1)^4||\tilde \theta||_{L^\infty\cap L^1}(1+||\tilde \theta||_{\dot C^{\alpha}}^2)\\&\nonumber+  ||\theta||_{L^\infty}^{\frac{2-2\alpha}{\alpha}}q^{-\frac{2}{\alpha}+2}||\tilde \theta||_{L^\infty} ||\tilde \theta||_{\dot C^{\frac{3}{4}}} ||\tilde \theta||_{\dot C^{\frac{1}{4}}} +||\theta||_{L^\infty}^{\frac{1-\alpha}{\alpha}}q^{-\frac{1}{\alpha}+\frac{4-\alpha}{2-\alpha}}||\theta||_{\dot C^{1+\frac{\alpha}{2}}}^{\frac{2(1-\alpha)}{2-\alpha}}\\&+||\tilde\theta||_{L^\infty}^{\frac{1-\alpha}{\alpha}}q^{-\frac{1}{\alpha}+2}||\tilde\theta||_{L^1\cap L^\infty}+q||f||_{\dot C^{\alpha}}+||\tilde\theta||_{L^\infty}^2||u_1||_{\dot C^{\frac{1+2\alpha}{3}}}^3.
\end{align}
We divide both sides by $q^{-\frac{1}{\alpha}+\frac{4-\alpha}{2-\alpha}}$
\begin{align}\nonumber
\pa_t (q^{\frac{1}{\alpha}-\frac{\alpha}{2-\alpha}}) &+c''||\theta||_{L^\infty}^{-\frac{1}{\alpha}} q^{\frac{2}{\alpha}-\frac{\alpha}{2-\alpha}}\lesssim q^{\frac{1}{\alpha}-\frac{4-\alpha}{2-\alpha}} ( ||\tilde \theta||_{L^\infty\cap L^1}+1)^4||\tilde \theta||_{L^\infty\cap L^1}(1+||\tilde \theta||_{\dot C^{\alpha}}^2)\\&\quad\quad\nonumber +  ||\theta||_{L^\infty}^{\frac{2-2\alpha}{\alpha}}q^{-\frac{1}{\alpha}-\frac{\alpha}{2-\alpha}}||\tilde \theta||_{L^\infty} ||\tilde \theta||_{\dot C^{\frac{3}{4}}} ||\tilde \theta||_{\dot C^{\frac{1}{4}}} +||\theta||_{L^\infty}^{\frac{1-\alpha}{\alpha}}||\theta||_{\dot C^{1+\frac{\alpha}{2}}}^{\frac{2(1-\alpha)}{2-\alpha}}\\&\quad\quad+||\tilde\theta||_{L^\infty}^{\frac{1-\alpha}{\alpha}}q^{-\frac{\alpha}{2-\alpha}}||\tilde\theta||_{L^1\cap L^\infty}+q^{\frac{1}{\alpha}-\frac{2}{2-\alpha}}||f||_{\dot C^{\alpha}}+q^{\frac{1}{\alpha}-\frac{4-\alpha}{2-\alpha}}||\theta||_{L^\infty}^2||u_1||_{\dot C^{\frac{1+2\alpha}{3}}}^3.\label{z83}
\end{align}
Note that the positive constant $c''$ now depends on $\alpha$.\\ 
Using $q(t)\geq c r_0^{-\alpha} ||\theta(t)||_{L^\infty}$; $q\lesssim ||\tilde \theta||_{\dot C^{\alpha}}$ and $||\theta||_{\dot C^{1+\frac{\alpha}{2}}}^{\frac{2(1-\alpha)}{2-\alpha}}\lesssim_{r_0} ||\theta||_{L^\infty}^{-\frac{\alpha}{2-\alpha}} ||\theta||_{\dot C^{1+\frac{\alpha}{2}}}$, we have 
 {{\begin{align}\nonumber
	q^{\frac{1}{\alpha}-\frac{4-\alpha}{2-\alpha}} ( ||\tilde \theta||_{L^\infty\cap L^1}+1)^4||\tilde \theta||_{L^\infty\cap L^1}(1+||\tilde \theta||_{\dot C^{\alpha}}^2)&\lesssim ( ||\tilde \theta||_{L^\infty\cap L^1}+1)^4||\tilde \theta||_{L^\infty\cap L^1}(1+||\tilde \theta||_{\dot C^{\alpha}}^2)||\tilde \theta||_{\dot C^{\alpha}}^{\frac{1}{\alpha}-\frac{4-\alpha}{2-\alpha}}\\&\lesssim M  ||\tilde \theta||_{\dot C^{\alpha}}^{\frac{1}{\alpha}-\frac{\alpha}{2-\alpha}}; 
\end{align}
\begin{align}\nonumber
	||\theta||_{L^\infty}^{\frac{2-2\alpha}{\alpha}}q^{-\frac{1}{\alpha}-\frac{\alpha}{2-\alpha}}||\tilde \theta||_{L^\infty} ||\tilde \theta||_{\dot C^{\frac{3}{4}}} ||\tilde \theta||_{\dot C^{\frac{1}{4}}} &+||\theta||_{L^\infty}^{\frac{1-\alpha}{\alpha}}||\theta||_{\dot C^{1+\frac{\alpha}{2}}}^{\frac{2(1-\alpha)}{2-\alpha}}+||\tilde\theta||_{L^\infty}^{\frac{1-\alpha}{\alpha}}q^{-\frac{\alpha}{2-\alpha}}||\tilde\theta||_{L^1\cap L^\infty}\\&\lesssim M\left(||\tilde \theta||_{\dot C^{\frac{3}{4}}} ||\tilde \theta||_{\dot C^{\frac{1}{4}}}+||\theta||_{\dot C^{1+\frac{\alpha}{2}}}+||\tilde\theta||_{L^1\cap L^\infty}\right);
\end{align}
\begin{align}
	&q^{\frac{1}{\alpha}-\frac{2}{2-\alpha}}||f||_{\dot C^{\alpha}}\leq \varepsilon ||\theta||_{L^\infty}^{-\frac{1}{\alpha}} q^{\frac{2}{\alpha}-\frac{\alpha}{2-\alpha}}+C_\varepsilon M ||f||_{\dot C^{\alpha}}^2;\\&
	q^{\frac{1}{\alpha}-\frac{4-\alpha}{2-\alpha}}||\theta||_{L^\infty}^2||u_1||_{\dot C^{\frac{1+2\alpha}{3}}}^3\leq \varepsilon||\theta||_{L^\infty}^{-\frac{1}{\alpha}} q^{\frac{2}{\alpha}-\frac{\alpha}{2-\alpha}}+C_\varepsilon M ||u_1||_{\dot C^{\frac{1+2\alpha}{3}}}^6,
\end{align}
for any $\varepsilon>0.$
Combining these with \eqref{z83}, we deduce  }}
\begin{align}\nonumber
&	\pa_t (q^{\frac{1}{\alpha}-\frac{\alpha}{2-\alpha}}) +c''||\theta||_{L^\infty}^{-\frac{1}{\alpha}} q^{\frac{2}{\alpha}-\frac{\alpha}{2-\alpha}}\\&\lesssim M\left(||\tilde\theta||_{L^1\cap L^\infty}+ ||\tilde \theta||_{\dot C^{\alpha}}^{\frac{1}{\alpha}-\frac{\alpha}{2-\alpha}} +||\tilde \theta||_{\dot C^{\frac{3}{4}}} ||\tilde \theta||_{\dot C^{\frac{1}{4}}}+||u_1||_{\dot C^{\frac{1+2\alpha}{3}}}^6+||f||_{\dot C^{\alpha}}^2+||\theta||_{\dot C^{1+\frac{\alpha}{2}}}\right).
\end{align}
This implies 	\eqref{z28} by using \eqref{z80}. 
\end{proof}

\begin{coro}\la{corol} Assume that $\theta\in C^{1+\alpha_0}([0,T],C^{\alpha_0}(\mathbb{R}^d))\cap {{C^{\alpha_0}}}([0,T],C^{1+\alpha_0}(\mathbb{R}^d))$, $\alpha_0\in (0,1)$ is a solution of \eqref{sqg4}-\eqref{u4}. Then, for {{$\alpha\in (0,\alpha_0)$}} such that  $\alpha(1+||\tilde{\theta}||_{L^\infty_{T}(L^\infty)})$ is small enough, we have 
{{\begin{align}\nonumber
&	\sup_{t\in [0,T]}||\theta(t)||_{\dot C^\alpha}^{\frac{1}{\alpha}-\frac{\alpha}{2-\alpha}} +\int_{0}^{T} ||\theta||_{\dot C^\alpha}^{\frac{2}{\alpha}-\frac{\alpha}{2-\alpha}} +\int_{0}^{T}||\theta||_{\dot C^{1+\frac{\alpha}{2}}}\lesssim \tilde M||\theta_0||_{\dot C^\alpha}^{\frac{1}{\alpha}-\frac{\alpha}{2-\alpha}}\\&\nonumber+\tilde M\left(\int_{0}^{T}||\tilde \theta||_{\dot C^{\alpha}}^{\frac{1}{\alpha}-\frac{\alpha}{2-\alpha}}+\int_{0}^{T}||\tilde \theta||_{\dot C^{\frac{3}{4}}} ||\tilde \theta||_{\dot C^{\frac{1}{4}}}+\int_{0}^{T}||u_1||_{\dot C^{\frac{1+2\alpha}{3}}}^6+\int_{0}^{T}||f||_{\dot C^{\alpha}}^2\right)\\&+\tilde MM_2(||\theta_0||_{ C^{\frac{5\alpha}{8}}}+||\tilde \theta||_{L^1_T( L^\infty\cap L^1)}+||f||_{L^1_T( C^{\frac{5\alpha}{8}})})+\tilde MM_2^{\frac{6}{\alpha}}\int_{0}^{T} (1+		||\tilde\theta||_{C^{\alpha}}^{\frac{5}{8}}	+	||u_1||_{ C^{\frac{5\alpha}{8}}})^{\frac{5}{2\alpha}}||\tilde \theta||_{\dot C^{\frac{\alpha}{2}}},\label{z34}
\end{align}}}
where 
\begin{align}
&	\tilde M=(||\tilde \theta||_{L^\infty_T(L^1\cap L^\infty)}+T+1)^{\frac{3}{\alpha}},\\&
{{M_2=(1+|| \tilde\theta||_{L^\infty_{T}(L^\infty\cap L^1)})\log(2+|| \tilde\theta||_{L^\infty_{T}( \dot C^{\alpha})})+		||u_1||_{L^\infty_{T}(L^\infty)}.}}
\end{align}
\end{coro}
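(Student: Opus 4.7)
The plan is to combine the differential inequality \eqref{z28} from Lemma \ref{drivencalpha} with the linear regularization estimate \eqref{z31b} from Lemma \ref{estidiffusion}, in order to close the loop: \eqref{z28} bounds the evolution of $\|\theta(t)\|_{\dot C^\alpha}$ in terms of $\int_0^T\|\theta\|_{\dot C^{1+\alpha/2}}\,dt$, so one needs a separate estimate for this last quantity. Since $\theta$ solves the linear dissipative advection equation \eqref{sqg4} with drift $b=u$, source $f$, and operator $L_1^{1/2}$, precisely the setting of Lemma \ref{estidiffusion}, the route is to apply \eqref{z31b} to $\theta$ and then substitute back into \eqref{z28}.

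Concretely, I would invoke \eqref{z31b} with $\alpha_1=\alpha/2$ and $\alpha_2=5\alpha/8$. These choices are admissible since $\alpha_1<1/3$ for $\alpha$ small and $\alpha_1<\alpha_2<1$; crucially, the arithmetic $\alpha_2/\alpha_1^2=5/(2\alpha)$ and $3/\alpha_1=6/\alpha$ matches the exponents displayed in \eqref{z34}. To use this estimate one must control $M_1=1+\|u\|_{L^\infty_T(L^\infty)}$ and $\|u\|_{C^{5\alpha/8}}$ in terms of the quantities appearing in the statement. From the representation \eqref{u4}, $u=\mathbf J\nabla L_2^{-1/2}\tilde\theta+u_1$, and since $\nabla L_2^{-1/2}$ behaves as a zero-order Calder\'on--Zygmund type operator (cf.\ Remark \ref{remaforin} together with the kernel estimates of Appendix~2), one obtains the logarithmic interpolation
\[
\|\nabla L_2^{-1/2}\tilde\theta\|_{L^\infty}\lesssim (1+\|\tilde\theta\|_{L^\infty\cap L^1})\log(2+\|\tilde\theta\|_{\dot C^\alpha}),
\]
giving $M_1\lesssim M_2$. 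For the H\"older norm at the level $5\alpha/8$, one bounds $\|\nabla L_2^{-1/2}\tilde\theta\|_{\dot C^{5\alpha/8}}\lesssim\|\tilde\theta\|_{\dot C^{5\alpha/8}}+\|\tilde\theta\|_{L^1}\lesssim\|\tilde\theta\|_{L^\infty}^{3/8}\|\tilde\theta\|_{\dot C^\alpha}^{5/8}+\|\tilde\theta\|_{L^1}$ by interpolation, whence $\|u\|_{C^{5\alpha/8}}\lesssim 1+\|\tilde\theta\|_{\dot C^\alpha}^{5/8}+\|u_1\|_{C^{5\alpha/8}}+\|\tilde\theta\|_{L^1}$. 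Inserting these into \eqref{z31b} and using $\theta=\eta\tilde\theta$ with $\eta$ Lipschitz compactly supported yields an upper bound of the form
\begin{align*}
\int_0^T\|\theta\|_{\dot C^{1+\alpha/2}}\,dt &\lesssim M_2(T+1)\bigl(\|\theta_0\|_{C^{5\alpha/8}}+\|\tilde\theta\|_{L^1_T(L^\infty\cap L^1)}+\|f\|_{L^1_T(C^{5\alpha/8})}\bigr)\\
&\quad +M_2^{6/\alpha}(T+1)^{5/4}\int_0^T\bigl(1+\|\tilde\theta\|_{C^\alpha}^{5/8}+\|u_1\|_{C^{5\alpha/8}}\bigr)^{5/(2\alpha)}\|\tilde\theta\|_{\dot C^{\alpha/2}}\,dt.
\end{align*}

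Substituting this into the right-hand side of \eqref{z28}, and absorbing $M\cdot M_1(T+1)$ and $M\cdot M_1^{6/\alpha}(T+1)^{5/4}$ into $\tilde M M_2$ and $\tilde M M_2^{6/\alpha}$ respectively (since $\tilde M=(\|\tilde\theta\|_{L^\infty_T(L^1\cap L^\infty)}+T+1)^{3/\alpha}\ge M=(\|\tilde\theta\|_{L^\infty_T(L^1\cap L^\infty)}+1)^{2/\alpha}$ by construction), one recovers exactly \eqref{z34}: the original terms on the right of \eqref{z28} get multiplied by $\tilde M$, while the new contributions from the regularization estimate produce the $\tilde M M_2$ and $\tilde M M_2^{6/\alpha}$ blocks. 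The main technical obstacle will be the careful bookkeeping of powers of $\alpha$ so that the exponent $5/(2\alpha)$ is exactly preserved, and verifying the logarithmic bound $M_1\lesssim M_2$ coming from the Riesz-type structure of $\nabla L_2^{-1/2}$; a secondary subtlety is that the smallness condition $\alpha(1+\|\tilde\theta\|_{L^\infty_T(L^\infty)})\ll 1$ inherited from Lemma \ref{drivencalpha} must remain unchanged through the substitution, which is automatic since the same $\alpha$ is used throughout.
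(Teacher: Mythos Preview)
Your proposal is correct and follows essentially the same route as the paper: apply \eqref{z31b} with $\alpha_1=\alpha/2$, $\alpha_2=5\alpha/8$, bound $M_1$ by $M_2$ via the logarithmic estimate \eqref{z27} and $\|u\|_{C^{5\alpha/8}}$ via \eqref{z27b} plus interpolation, then substitute the resulting control on $\int_0^T\|\theta\|_{\dot C^{1+\alpha/2}}$ into \eqref{z28} and absorb $M$ into $\tilde M$. The exponent bookkeeping you flag ($\alpha_2/\alpha_1^2=5/(2\alpha)$, $3/\alpha_1=6/\alpha$, $(T+1)^{\alpha_2/\alpha_1}=(T+1)^{5/4}$) is exactly what the paper uses.
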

\begin{proof} We apply  \eqref{z31b} to $(v,b)=(\theta, u)$ with  $\alpha_1=\frac{\alpha}{2},\alpha_2=\frac{5\alpha}{8}$,
\begin{align}\nonumber
||\theta||_{L^1_T(\dot C^{1+\frac{\alpha}{2}})}&\lesssim
M_1(T+1)\left(||\theta_0||_{ C^{\frac{5\alpha}{8}}}+||\theta||_{L^1_T(L^\infty)}+||f||_{L^1_T( C^{\frac{5\alpha}{8}})}\right)\\&+M_1^{\frac{6}{\alpha}}(1+T)^{\frac{5}{4}}\int_{0}^{T} (1+	||u||_{ C^{\frac{5\alpha}{8}}})^{\frac{5}{2\alpha}}||\theta||_{\dot C^{\frac{\alpha}{2}}}ds,
\end{align}
with 
$M_1=1+|| u||_{L^\infty_T(L^\infty)}$.\\
In view of \eqref{z27b}, \eqref{z27} and \eqref{z89}, we estimate 
\begin{align}\nonumber
&||u(s)||_{ C^{\frac{5\alpha}{8}}}\lesssim ||\tilde\theta(s)||_{C^{\frac{5\alpha}{8}}}+||\tilde\theta(s)||_{L^1}	+	||u_1(s)||_{ C^{\frac{5\alpha}{8}}}\\&\quad\quad\quad\quad\quad\quad\lesssim  ||\tilde\theta(s)||_{L^\infty}^{\frac{3}{8}}	||\tilde\theta(s)||_{C^{\alpha}}^{\frac{5}{8}}+||\tilde\theta(s)||_{L^1}	+	||u_1(s)||_{ C^{\frac{5\alpha}{8}}},\\&
M_1\lesssim \left(1+	|| \tilde\theta||_{L^\infty_{T}(L^\infty\cap L^1)}\right)\log(2+|| \tilde\theta||_{L^\infty_{T}( \dot C^{\alpha})})+		||u_1||_{L^\infty_{T}(L^\infty)}:= M_2,
\end{align}
we get 
\begin{align}\nonumber
&||\theta||_{L^1_T(\dot C^{1+\frac{\alpha}{2}})}\lesssim
M_2(T+1)\left(||\theta_0||_{ C^{\frac{5\alpha}{8}}}+||\theta||_{L^1_T(L^\infty)}+||f||_{L^1_T( C^{\frac{5\alpha}{8}})}\right)\\&\quad\quad\quad\quad+M_2^{\frac{6}{\alpha}}(T+1)^{\frac{5}{4}}\int_{0}^{T} (1+	||\tilde\theta||_{L^\infty}^{\frac{3}{8}}	||\tilde\theta||_{C^{\alpha}}^{\frac{5}{8}}+||\tilde\theta||_{L^1}		+	||u_1||_{ C^{\frac{5\alpha}{8}}})^{\frac{5}{2\alpha}}||\tilde \theta||_{\dot C^{\frac{\alpha}{2}}}.
\end{align}
Combining this with \eqref{z28} we obtain \eqref{z34}.
\end{proof}

\section{Holder regularity }\la{hore}
In this section, we prove 
\begin{thm} \label{holderR}  Let $\theta\in C([0,T],C^{\alpha_0}(\Omega))\cap  C^{1+\alpha_0}_{loc}((0,T],C^{\alpha_0}(\Omega))\cap C^{\alpha_0}_{loc}((0,T],C^{1+\alpha_0}(\Omega))$, $\alpha_0\in (0,1)$  be a solution to \eqref{sqg}, \eqref{u}.  Let $\alpha>0$ be such that $\alpha (\|\theta_0\|_{L^{\infty}} +1)$ is small enough and $\alpha<\fr{\alpha_0}{2}$.  Then
\begin{align}\label{z81}
\sup_{t\in [0,T]}||\theta||_{\dot C^\alpha(\Omega)}^{\frac{1}{\alpha}-\frac{\alpha}{2-\alpha}} +\int_{0}^{T} ||\theta(s)||_{\dot C^\alpha(\Omega)}^{\frac{2}{\alpha}-\frac{\alpha}{2-\alpha}} ds+	\int_{0}^{T}|| \theta(s)||_{ \dot C^{1+\frac{\alpha}{2}}(\Omega)}ds\lesssim (||\theta_0||_{ C^\alpha(\Omega)}+\text{diam}(\Omega)+T+1)^{\frac{36}{\alpha^2}}
\end{align}
holds.
\end{thm}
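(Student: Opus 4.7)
The strategy is to apply Corollary \ref{corol} on each patch, sum the resulting differential inequalities, and close via a Gr\"onwall-type argument, exploiting the fact that norms of the array $\Theta = (\widetilde\theta_i)$ are equivalent to norms of $\theta$ on $\overline\Omega$. For each $i=1,\dots,N$, the localized and extended equation \eqref{widesqg} (boundary patches) or \eqref{z35a1} (interior patches) fits exactly the model equation \eqref{sqg4}-\eqref{u4} with operator $L_1$ of Lipschitz coefficients, drift $b = u_i$ written as $\mathbf{J}\na L_2^{-1/2}\widetilde\theta_i + u_{re,i}$, and forcing $f_i$ satisfying \eqref{esu_re}-\eqref{fo}, \eqref{esu_rein}-\eqref{foin}. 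In particular, $\|u_{re,i}\|_{C^r(\Rr^2)}\lesssim \|\theta\|_{L^\infty(\Omega)} \le \|\theta_0\|_{L^\infty}$ by the weak maximum principle for \eqref{sqg}, and $\|f_i\|_{C^r(\Rr^2)}\lesssim \|\theta\|_{C^r(\Omega)}(1+\|\theta\|_{C^\beta(\Omega)})$ for any $0<r,\beta<1$. The smallness hypothesis $\alpha(\|\theta_0\|_{L^\infty}+1)\le\delta$ guarantees that $\alpha(1+\|\widetilde\theta_i\|_{L^\infty_T(L^\infty)})$ is small, so the nonlinear maximum principle machinery of Lemma \ref{drivencalpha} and Corollary \ref{corol} applies.

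Writing the Corollary \ref{corol} bound for each $\theta_i$, plugging in the forcing estimates \eqref{fo}/\eqref{foin} with $r=\alpha$ and $\beta = \tfrac12$ (or any $\beta<1$), and observing
\[
\|u_{re,i}\|_{\dot C^{(1+2\alpha)/3}}^6 + \|f_i\|_{\dot C^\alpha}^2 \lesssim \|\theta_0\|_{L^\infty}^6 + \|\theta\|_{C^\alpha(\Omega)}^2(1+\|\theta\|_{C^{1/2}(\Omega)})^2,
\]
and $M_2 \lesssim (1+\|\theta_0\|_{L^\infty})\log(2+\|\theta\|_{C^\alpha(\Omega)}) + \|\theta_0\|_{L^\infty}$, I would sum over $1\le i\le N$ to obtain, with $\Phi(t) := \|\theta(t)\|_{\dot C^\alpha(\Omega)}$ and using the equivalence of patch and domain norms,
\[
\sup_{[0,T]}\Phi^{\frac1\alpha-\frac{\alpha}{2-\alpha}} + \int_0^T\Phi^{\frac2\alpha-\frac{\alpha}{2-\alpha}} + \int_0^T\|\theta\|_{\dot C^{1+\alpha/2}(\Omega)} \lesssim K_0 + K_1\int_0^T\bigl(\Phi^{\frac1\alpha-\frac{\alpha}{2-\alpha}} + G(\theta)\bigr),
\]
where $K_0,K_1$ are explicit in $(\|\theta_0\|_{C^\alpha}+\mathrm{diam}(\Omega)+T+1)^{O(1/\alpha)}$ and $G(\theta)$ collects the terms $\|\widetilde\theta\|_{\dot C^{3/4}}\|\widetilde\theta\|_{\dot C^{1/4}}$, $(1+\|\widetilde\theta\|_{C^\alpha}^{5/8}+\|u_1\|_{C^{5\alpha/8}})^{5/(2\alpha)}\|\widetilde\theta\|_{\dot C^{\alpha/2}}$ and the $L^1_T(L^\infty\cap L^1)$ term.

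The main technical step is to show that $\int_0^T G(\theta)\,dt$ is controlled by a power of the LHS quantities plus an $L^\infty$-type bound, so that the inequality closes. For this I interpolate:
\begin{align*}
\|\widetilde\theta\|_{\dot C^{3/4}}\|\widetilde\theta\|_{\dot C^{1/4}} &\lesssim \|\theta\|_{L^\infty}^{a_0}\,\|\theta\|_{\dot C^\alpha}^{a_1}\,\|\theta\|_{\dot C^{1+\alpha/2}}^{a_2}, \\
\|\widetilde\theta\|_{\dot C^{\alpha/2}} &\lesssim \|\theta\|_{L^\infty}^{1/2}\|\theta\|_{\dot C^\alpha}^{1/2},
\end{align*}
with exponents $a_i$ chosen so that $a_2\le 1$; by Young's inequality each such term is dominated by $\varepsilon\int_0^T\|\theta\|_{\dot C^{1+\alpha/2}} + C_\varepsilon\int_0^T (\text{powers of }\Phi) $, and the $\varepsilon$ term is absorbed into the LHS. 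For the term of homogeneity $5/(2\alpha)$ in $\Phi$, one pays a factor $(1+\Phi)^{5/(2\alpha)\cdot(1)}$ but couples it with $\Phi^{1/2}$, yielding a power bounded by roughly $(1/\alpha) + 3/\alpha$ after using $\|\theta\|_{L^\infty}\le\|\theta_0\|_{L^\infty}$ and the elementary inequality $x^{p}\le 1 + x^{q}$ for $p<q$.

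After these absorptions the inequality takes the Gr\"onwall form
\[
y(T) + \int_0^T y(s)^{\gamma}\,ds \le A + B\int_0^T y(s)\,ds
\]
for $y = \Phi^{1/\alpha-\alpha/(2-\alpha)}$, with $\gamma>1$ and $A,B$ explicit of size $(\|\theta_0\|_{C^\alpha}+\mathrm{diam}(\Omega)+T+1)^{O(1/\alpha^2)}$. Iterating, or applying a standard nonlinear Gr\"onwall lemma (e.g.\ treating the $y^\gamma$ term as providing extra decay and the $\int y$ term via standard linear Gr\"onwall), produces the final bound with exponent $36/\alpha^2$ in \eqref{z81}. The hard part is precisely the bookkeeping of exponents: one has to make sure that each nonlinear intertwining error, each interpolation, and each application of Corollary \ref{corol} produces at most a multiplicative constant whose logarithm is $O(1/\alpha)$, so that the final product is $O(1/\alpha^2)$; the enhanced damping homogeneity $\alpha^{-1}$ furnished by the nonlinear maximum principle (the gain from $|h|\le(2\|\theta\|_{L^\infty}/q)^{1/\alpha}$) is precisely what keeps these exponents finite.
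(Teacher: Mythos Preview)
Your overall strategy matches the paper's proof: apply Corollary \ref{corol} on each patch using the error estimates \eqref{esu_re}--\eqref{foin}, sum over the cover, and absorb the lower-order integrals via interpolation. Two points deserve correction, however.

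First, the final closure is not by Gr\"onwall. Applying linear Gr\"onwall to $y(T)\le A+B\int_0^T y$ would give $Ae^{BT}$, not the polynomial \eqref{z81}. The paper instead absorbs every right-hand term directly into the left via Young's inequality: since the left already carries $\int_0^T \Phi^{2/\alpha-\alpha/(2-\alpha)}$ and $\int_0^T\|\theta\|_{\dot C^{1+\alpha/2}}$, each integral $\tilde M_1\int_0^T\|\theta\|_{C^{\beta_1}}^{\varkappa_1}\cdots\|\theta\|_{C^{\beta_k}}^{\varkappa_k}$ with $\sum\beta_j\varkappa_j\le 2-\alpha^2/(2-\alpha)$ (respectively $\le 1+\alpha/2$) is bounded by $\varepsilon$ times the corresponding left-hand quantity plus $C_\varepsilon\tilde M_1^{O(1/\alpha)}$; this is the content of the paper's auxiliary corollaries \eqref{z94b}, \eqref{z94c}. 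Choosing $\varepsilon$ small closes the estimate with no iteration and yields the polynomial exponent.

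Second, you omit the reduction from the hypothesis of the theorem to uniform-in-time regularity. Corollary \ref{corol} requires $\theta\in C^{1+\alpha_0}([0,T],C^{\alpha_0})\cap C^{\alpha_0}([0,T],C^{1+\alpha_0})$, whereas Theorem \ref{holderR} assumes this only locally on $(0,T]$. The paper first proves \eqref{z81} under the stronger hypothesis, then applies the result on $[\epsilon,T]$ for $\epsilon>0$ and sends $\epsilon\to 0$, using $\theta\in C([0,T],C^{\alpha_0})$ and $\alpha<\alpha_0/2$ to pass $\|\theta(\epsilon)\|_{C^\alpha}\to\|\theta_0\|_{C^\alpha}$, with monotone convergence for the $\int_\epsilon^T\|\theta\|_{\dot C^{1+\alpha/2}}$ term.
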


\begin{proof}  
We know from \eqref{sqg} that
{{
\begin{equation}\label{z59}
\sup_{t>0}||\theta(t)||_{L^\infty(\Omega)}\leq 	||\theta_0||_{L^\infty(\Omega)}
\end{equation}}}
holds. \\
{{First, we prove \eqref{z81} with assumption $\theta\in  C^{1+\alpha_0}([0,T],C^{\alpha_0}(\Omega))\cap C^{\alpha_0}([0,T],C^{1+\alpha_0}(\Omega))$.}}
We apply \eqref{z34} with {{$(f,u_1)=(f_{in},u_{re,in})$}} to the system \eqref{z35a1}-\eqref{z35b1} and use \eqref{esu_rein}, \eqref{foin} and \eqref{z59} to obtain that

\begin{align}\nonumber
\sup_{t\in [0,T]}||\theta||_{\dot C^\alpha(B_i^0)}^{\frac{1}{\alpha}-\frac{\alpha}{2-\alpha}} &+\int_{0}^{T} ||\theta||_{\dot C^\alpha (B_i^0)}^{\frac{2}{\alpha}-\frac{\alpha}{2-\alpha}} +	\int_{0}^{T}|| \theta||_{ \dot C^{1+\frac{\alpha}{2}}(B_i^0)}\lesssim	 \tilde M_1+\\&\nonumber\quad+\tilde M_1\left(\int_{0}^{T}|| \theta||_{ C^{\alpha}}^{\frac{1}{\alpha}-\frac{\alpha}{2-\alpha}}+\int_{0}^{T}|| \theta||_{ C^{\frac{3}{4}}} ||\theta||_{ C^{\frac{1}{4}}}+\int_{0}^{T} ||\theta||_{ C^\alpha}^{4}\right)\\&\quad+{{ \tilde M_1 \log(2+|| \theta||_{L^\infty_{T}(C^{\alpha})})^{\frac{6}{\alpha}} \int_{0}^{T} \left(1+ ||\theta||_{ C^{\alpha}}\right)^{\frac{25}{16\alpha}+\frac{1}{2}}}}:=M_4,\label{z83a}
\end{align}
for $N_1+1\leq i\leq N$, where 
\begin{equation}
\tilde M_1=({{ ||\theta_0||_{ C^\alpha(\Omega)}}}+\text{diam}(\Omega)+T+1)^{\frac{12}{\alpha}}.
\end{equation}
{{ In this inequality,  we estimated
\begin{equation}
		\tilde M||\theta_0||_{\dot C^\alpha}^{\frac{1}{\alpha}-\frac{\alpha}{2-\alpha}}+\tilde M\int_{0}^{T}||u_1||_{\dot C^{\frac{1+2\alpha}{3}}}^6+\tilde MM_2\left(||\theta_0||_{ C^{\frac{5\alpha}{8}}}+||\tilde \theta||_{L^1_T( L^\infty\cap L^1)}\right)\lesssim \tilde M_1\log(2+|| \theta||_{L^\infty_{T}(  C^{\alpha})});
\end{equation}
\begin{equation}
		\tilde M\int_{0}^{T}||f||_{\dot C^{\alpha}}^2+\tilde MM_2||f||_{L^1_T( C^{\frac{5\alpha}{8}})}\lesssim \tilde M_1+\tilde M_1\int_{0}^{T} ||\theta||_{ C^\alpha}^{4};
\end{equation}
and
\begin{equation}
\tilde MM_2^{\frac{6}{\alpha}}\int_{0}^{T} (1+		||\tilde\theta||_{C^{\alpha}}^{\frac{5}{8}}	+	||u_1||_{ C^{\frac{5\alpha}{8}}})^{\frac{5}{2\alpha}}||\tilde \theta||_{\dot C^{\frac{\alpha}{2}}}\lesssim \tilde M_1 \log(2+|| \theta||_{L^\infty_{T}(  C^{\alpha})})^{\frac{6}{\alpha}} \int_{0}^{T} \left(1+ ||\theta||_{ C^{\alpha}}\right)^{\frac{25}{16\alpha}+\frac{1}{2}}.
\end{equation}}}
Similarly, we apply \eqref{z34} to the equation \eqref{widesqg}  {{ and use \eqref{esu_re}, \eqref{fo}, \eqref{z87b}}} and \eqref{z59}  to deduce that 
\begin{align}	\sup_{t\in [0,T]}||\theta||_{\dot C^\alpha(B_i^0\cap\Omega)}^{\frac{1}{\alpha}-\frac{\alpha}{2-\alpha}} +\int_{0}^{T} ||\theta||_{\dot C^\alpha (B_i^0\cap\Omega)}^{\frac{2}{\alpha}-\frac{\alpha}{2-\alpha}} ds+	\int_{0}^{T}|| \theta(t)||_{ \dot C^{1+\frac{\alpha}{2}}(B_i^0\cap\Omega)}\lesssim M_4\label{z83b}
\end{align}
for $1\leq i\leq N_1$.\\ As we have a cover of $\Omega$ by $\cup_{i=1}^NB_{i}^0$, then we obtain from \eqref{z83a} and  \eqref{z83b} that  
\begin{align}
\sup_{t\in [0,T]}||\theta||_{\dot C^\alpha(\Omega)}^{\frac{1}{\alpha}-\frac{\alpha}{2-\alpha}} +\int_{0}^{T} ||\theta||_{\dot C^\alpha(\Omega)}^{\frac{2}{\alpha}-\frac{\alpha}{2-\alpha}} +	\int_{0}^{T}|| \theta||_{ \dot C^{1+\frac{\alpha}{2}}(\Omega)}\lesssim M_4.
\end{align}
By interpolation, Holder's inequalities and \eqref{z94b},  \eqref{z94c} , we obtain
\begin{equation}
		\tilde M_1\left(\int_{0}^{T}|| \theta||_{ C^{\alpha}}^{\frac{1}{\alpha}-\frac{\alpha}{2-\alpha}}+\int_{0}^{T} ||\theta||_{ C^\alpha}^{4}\right)\lesssim  \varepsilon\int_{0}^{T} ||\theta||_{\dot C^\alpha}^{\frac{2}{\alpha}-\frac{\alpha}{2-\alpha}} + C_\varepsilon	\tilde M_1^3;
\end{equation}
\begin{equation}
\tilde M_1\int_{0}^{T}|| \theta||_{ C^{\frac{3}{4}}} ||\theta||_{ C^{\frac{1}{4}}}\overset{\eqref{z59}}\lesssim \tilde M_1\int_{0}^{T}|| \theta_0||_{ L^\infty}^{\frac{2(1+\alpha)}{2+\alpha}} ||\theta||_{ C^{1+\frac{\alpha}{2}}}^{\frac{2}{2+\alpha}}\lesssim \varepsilon\int_{0}^{T}|| \theta||_{ \dot C^{1+\frac{\alpha}{2}}}+C_\varepsilon \tilde M_1^{\frac{3}{\alpha}};	
\end{equation}
and 
\begin{equation}
\tilde M_1 \log(2+|| \theta||_{L^\infty_{T}(  C^{\alpha})})^{\frac{6}{\alpha}} \int_{0}^{T} \left(1+ ||\theta||_{ C^{\alpha}}\right)^{\frac{25}{16\alpha}+\frac{1}{2}}\lesssim 	\varepsilon(\sup_{[0,T]}||\theta||_{\dot C^\alpha}^{\frac{1}{\alpha}-\frac{\alpha}{2-\alpha}} +\int_{0}^{T} ||\theta||_{\dot C^\alpha}^{\frac{2}{\alpha}-\frac{\alpha}{2-\alpha}} )+C_\varepsilon\tilde M_1^3.
\end{equation}
Thus, 
\begin{equation}
\sup_{[0,T]}||\theta||_{\dot C^\alpha(\Omega)}^{\frac{1}{\alpha}-\frac{\alpha}{2-\alpha}} +\int_{0}^{T} ||\theta||_{\dot C^\alpha(\Omega)}^{\frac{2}{\alpha}-\frac{\alpha}{2-\alpha}} ds+	\int_{0}^{T}|| \theta||_{ \dot C^{1+\frac{\alpha}{2}}(\Omega)}\lesssim \tilde M_1^{\frac{3}{\alpha}}.
\end{equation}
This implies \eqref{z81}  with assumption $\theta\in  C^{1+\alpha_0}([0,T],C^{\alpha_0}(\Omega))\cap C^{\alpha_0}([0,T],C^{1+\alpha_0}(\Omega))$. Now, we prove  \eqref{z81} without this assumption. \\
Indeed, because  $\theta\in  C^{1+\alpha_0}([\epsilon,T],C^{\alpha_0}(\Omega))\cap C^{\alpha_0}([\epsilon,T],C^{1+\alpha_0}(\Omega))$, for $\epsilon>0$, we apply  \eqref{z81} on $\epsilon+[0,T-\epsilon]=[\epsilon,T]$ and deduce that 
\begin{align}\label{z81b}
	\sup_{t\in [\epsilon,T]}||\theta||_{\dot C^\alpha(\Omega)}^{\frac{1}{\alpha}-\frac{\alpha}{2-\alpha}} +\int_{\epsilon}^{T} ||\theta(s)||_{\dot C^\alpha(\Omega)}^{\frac{2}{\alpha}-\frac{\alpha}{2-\alpha}} ds+	\int_{\epsilon}^{T}|| \theta(s)||_{ \dot C^{1+\frac{\alpha}{2}}(\Omega)}ds\lesssim (||\theta(\epsilon)||_{ C^\alpha(\Omega)}+\text{diam}(\Omega)+T-\epsilon+1)^{\frac{36}{\alpha^2}}
\end{align}
provided that $\alpha (\|\theta(\epsilon)\|_{L^{\infty}} +1)\overset{\eqref{z59}}\leq \alpha (\|\theta_0\|_{L^{\infty}} +1)$ is small enough and $\alpha<\fr{\alpha_0}{2}$. \\
Using $\theta\in C([0,T],C^{\alpha_0}(\Omega))$ and $\alpha<\alpha_0/2$ and letting $\epsilon\to 0$ to deduce that 
\begin{align}\nonumber
	&\sup_{t\in [0,T]}||\theta||_{\dot C^\alpha(\Omega)}^{\frac{1}{\alpha}-\frac{\alpha}{2-\alpha}} +\int_{0}^{T} ||\theta(s)||_{\dot C^\alpha(\Omega)}^{\frac{2}{\alpha}-\frac{\alpha}{2-\alpha}} ds+	\limsup_{\epsilon\to 0}\int_{\epsilon}^{T}|| \theta(s)||_{ \dot C^{1+\frac{\alpha}{2}}(\Omega)}ds
\\&\nonumber=\limsup_{\epsilon\to 0}	\left(\sup_{t\in [\epsilon,T]}||\theta||_{\dot C^\alpha(\Omega)}^{\frac{1}{\alpha}-\frac{\alpha}{2-\alpha}} +\int_{\epsilon}^{T} ||\theta(s)||_{\dot C^\alpha(\Omega)}^{\frac{2}{\alpha}-\frac{\alpha}{2-\alpha}} ds+	\int_{\epsilon}^{T}|| \theta(s)||_{ \dot C^{1+\frac{\alpha}{2}}(\Omega)}ds\right)\\&\nonumber\quad\quad\lesssim \limsup_{\epsilon\to 0} (||\theta(\epsilon)||_{ C^\alpha(\Omega)}+\text{diam}(\Omega)+T-\epsilon+1)^{\frac{36}{\alpha^2}}\\&\quad\quad \leq (||\theta_0||_{ C^\alpha(\Omega)}+\text{diam}(\Omega)+T+1)^{\frac{36}{\alpha^2}}.
\end{align}
This implies the result by using monotone convergence theorem.
\end{proof}
\begin{lemma} For any $\beta_0,\beta_1,...,\beta_k>0,\beta_1,...,\beta_k\leq \beta_0$, and $\varkappa_0,\varkappa_1,...,\varkappa_k\geq 0$, we have 
	\begin{equation}\label{z94}
		\int_{0}^{T}||\theta||_{C^{\beta_1}(\Omega)}^{\varkappa_1}...||\theta||_{C^{\beta_k}(\Omega)}^{\varkappa_k} \lesssim (1+||\theta_0||_{L^\infty})^{\sum_{j=1}^{k}\varkappa_j} \left(\int_{0}^{T} ||\theta||_{ C^{\beta_0}(\Omega)}^{\varkappa_0} +T+1\right)
	\end{equation}
	provided $\sum_{j=1}^{k}\beta_j\varkappa_j\leq \beta_0 \varkappa_0$. 
\end{lemma}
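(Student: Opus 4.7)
The plan is to combine the Hölder interpolation inequality \eqref{interdeltagamma}, the weak $L^\infty$ maximum principle for \eqref{sqg}, and an elementary monomial comparison. No finer tools are needed; the proof is essentially algebraic.

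First, for each $j=1,\dots,k$, I would apply \eqref{interdeltagamma} with $\delta=0$ and $\gamma=\beta_0$, which is admissible since $0<\beta_j\le \beta_0$, to obtain
\begin{equation*}
\|\theta(t)\|_{C^{\beta_j}(\Omega)}\lesssim \|\theta(t)\|_{L^\infty(\Omega)}^{1-\beta_j/\beta_0}\,\|\theta(t)\|_{C^{\beta_0}(\Omega)}^{\beta_j/\beta_0}.
\end{equation*}
Multiplying across $j$ with weights $\varkappa_j$ and using the weak maximum principle $\|\theta(t)\|_{L^\infty(\Omega)}\le \|\theta_0\|_{L^\infty(\Omega)}$ for critical dissipative SQG gives
\begin{equation*}
\prod_{j=1}^{k}\|\theta(t)\|_{C^{\beta_j}(\Omega)}^{\varkappa_j}\lesssim (1+\|\theta_0\|_{L^\infty(\Omega)})^{\sum_{j=1}^{k}\varkappa_j}\,\|\theta(t)\|_{C^{\beta_0}(\Omega)}^{\sigma},
\end{equation*}
where $\sigma:=\sum_{j=1}^{k}\varkappa_j\beta_j/\beta_0$. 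Here I absorbed the factor $\|\theta_0\|_{L^\infty}^{\sum_j \varkappa_j(1-\beta_j/\beta_0)}$ into $(1+\|\theta_0\|_{L^\infty})^{\sum_j \varkappa_j}$, using that each exponent $1-\beta_j/\beta_0\in[0,1]$.

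The hypothesis $\sum_{j=1}^k \beta_j\varkappa_j\le \beta_0\varkappa_0$ translates to $0\le \sigma\le \varkappa_0$, so the elementary inequality $x^\sigma\le 1+x^{\varkappa_0}$ (valid for all $x\ge 0$ and $0\le \sigma\le \varkappa_0$) promotes the power of $\|\theta\|_{C^{\beta_0}}$ to $\varkappa_0$ at the cost of an additive constant. Integrating in $t$ over $[0,T]$ then yields \eqref{z94} directly, with the $T+1$ on the right absorbing the additive $1$ across the time interval. The argument is entirely routine; the only substantive point is that the constraint $\sum_j \beta_j\varkappa_j\le \beta_0\varkappa_0$ is precisely what makes the resulting power $\sigma$ compatible with the monomial bound, so no genuine obstacle arises.
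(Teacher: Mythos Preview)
Your proof is correct and follows essentially the same route as the paper: interpolate each $\|\theta\|_{C^{\beta_j}}$ between $\|\theta\|_{L^\infty}$ and $\|\theta\|_{C^{\beta_0}}$, invoke the weak maximum principle to replace $\|\theta(t)\|_{L^\infty}$ by $\|\theta_0\|_{L^\infty}$, and then integrate in time. The only cosmetic difference is in handling the slack $\sigma<\varkappa_0$: the paper first reduces to the equality case $\sum_j\beta_j\varkappa_j=\beta_0\varkappa_0$ via H\"older's inequality in the time variable, whereas you use the pointwise bound $x^\sigma\le 1+x^{\varkappa_0}$; both are trivially equivalent devices.
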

\begin{proof} Using Holder's inequality, we only need to prove \eqref{z94} with $\sum_{j=1}^{k}\beta_j\varkappa_j= \beta_0 \varkappa_0$. Indeed, in view of \eqref{interdeltagamma}, we have  
	\begin{equation}
		||\theta(t)||_{C^{\beta_j}(\Omega)}^{\varkappa_j}\lesssim  	||\theta(t)||_{L^\infty(\Omega)}^{\varkappa_j-\frac{\beta_j\varkappa_j}{\beta_0}}||\theta(t)||_{C^{\beta_2}(\Omega)}^{\frac{\beta_j\varkappa_j}{\beta_0}}\lesssim ||\theta_0||_{L^\infty(\Omega)}^{\varkappa_j-\frac{\beta_j\varkappa_j}{\beta_0}}||\theta(t)||_{C^{\beta_0}(\Omega)}^{\frac{\beta_j\varkappa_j}{\beta_0}}.
	\end{equation}
Thus, 
	\begin{equation}
	||\theta(t)||_{C^{\beta_1}(\Omega)}^{\varkappa_1}...	||\theta(t)||_{C^{\beta_k}(\Omega)}^{\varkappa_k}\lesssim ||\theta_0||_{L^\infty(\Omega)}^{\sum_{j=1}^{k}\varkappa_j-\varkappa_0}||\theta(t)||_{C^{\beta_0}(\Omega)}^{\varkappa_0}.
\end{equation}
This implies \eqref{z94} with $\sum_{j=1}^{k}\beta_j\varkappa_j= \beta_0 \varkappa_0$.
\end{proof}
\begin{coro} For any $\beta_1,...,\beta_k>0,\beta_1,...,\beta_k\leq1+\frac{\alpha}{2}$, and $\varkappa_1,...,\varkappa_k\geq 0$, we have 
	\begin{equation}\label{z94b}
		\int_{0}^{T}||\theta||_{C^{\beta_1}(\Omega)}^{\varkappa_1}...||\theta||_{C^{\beta_k}(\Omega)}^{\varkappa_k} \lesssim (1+||\theta_0||_{L^\infty})^{\sum_{j=1}^{k}\varkappa_j} \left(\int_{0}^{T} ||\theta||_{ C^{1+\frac{\alpha}{2}}(\Omega)}+T+1\right)
	\end{equation}
	provided $\sum_{j=1}^{k}\beta_j\varkappa_j\leq 1+\frac{\alpha}{2}$. 
\end{coro}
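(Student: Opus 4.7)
The plan is to deduce this corollary as a direct specialization of the preceding Lemma, which establishes the more general inequality \eqref{z94} involving a free parameter $\beta_0$ and a free exponent $\varkappa_0$. Specifically, I will apply that Lemma with the choices
\[
\beta_0 = 1 + \tfrac{\alpha}{2}, \qquad \varkappa_0 = 1.
\]
The hypothesis $\beta_1,\dots,\beta_k \le \beta_0$ of the Lemma then reduces to $\beta_1,\dots,\beta_k \le 1+\tfrac{\alpha}{2}$, which is exactly the hypothesis of the Corollary, and the balance condition $\sum_{j=1}^k \beta_j \varkappa_j \le \beta_0 \varkappa_0$ becomes $\sum_{j=1}^k \beta_j \varkappa_j \le 1 + \tfrac{\alpha}{2}$, again matching the hypothesis of the Corollary.

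With these substitutions, the conclusion \eqref{z94} of the Lemma reads
\[
\int_0^T \|\theta\|_{C^{\beta_1}(\Omega)}^{\varkappa_1}\cdots \|\theta\|_{C^{\beta_k}(\Omega)}^{\varkappa_k}\,dt \;\lesssim\; \bigl(1+\|\theta_0\|_{L^\infty}\bigr)^{\sum_{j=1}^k \varkappa_j} \left( \int_0^T \|\theta\|_{C^{1+\frac{\alpha}{2}}(\Omega)}\,dt + T + 1\right),
\]
which is precisely the claimed inequality \eqref{z94b}. There is no real obstacle here: the Corollary is a pure restatement of the Lemma in the particular case $\beta_0=1+\alpha/2$, $\varkappa_0=1$, chosen because in the applications (cf.\ the proof of Theorem \ref{holderR}) the controlled quantity is exactly the time integral of $\|\theta\|_{C^{1+\alpha/2}(\Omega)}$ coming from the a priori estimate provided by Corollary \ref{corol}. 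Thus the proof amounts to a single invocation of the previous Lemma with these parameters, with no further computation required.
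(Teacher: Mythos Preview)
Your proposal is correct and matches the paper's approach: the corollary is an immediate specialization of the preceding Lemma \eqref{z94} with $\beta_0 = 1+\tfrac{\alpha}{2}$ and $\varkappa_0 = 1$, and the paper accordingly states it without proof.
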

\begin{coro} For any $\beta_1,...,\beta_k>0,\beta_1,...,\beta_k\leq \alpha$, and $\varkappa_1,...,\varkappa_k\geq 0$, we have 
	\begin{equation}\label{z94c}
		\int_{0}^{T}||\theta||_{C^{\beta_1}(\Omega)}^{\varkappa_1}...||\theta||_{C^{\beta_k}(\Omega)}^{\varkappa_k} \lesssim (1+||\theta_0||_{L^\infty})^{\sum_{j=1}^{k}\varkappa_j} \left(\int_{0}^{T}||\theta||_{ C^\alpha(\Omega)}^{\frac{2}{\alpha}-\frac{\alpha}{2-\alpha}} +T+1\right)
	\end{equation}
	provided $\sum_{j=1}^{k}\beta_j\varkappa_j\leq 2-\frac{\alpha^2}{2-\alpha}$. 
\end{coro}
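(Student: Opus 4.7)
The plan is to derive this corollary as an immediate specialization of Lemma \eqref{z94} (the preceding unnumbered lemma), choosing the exponent $\varkappa_0$ and base regularity $\beta_0$ so that $\beta_0\varkappa_0$ matches the threshold $2-\frac{\alpha^2}{2-\alpha}$ appearing in the hypothesis.

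Specifically, I would set $\beta_0 = \alpha$ and $\varkappa_0 = \frac{2}{\alpha}-\frac{\alpha}{2-\alpha}$ in \eqref{z94}. The two hypotheses needed by that lemma then become:
\begin{itemize}
\item $\beta_j \leq \beta_0 = \alpha$ for each $j$, which is exactly the standing assumption of the corollary;
\item $\sum_{j=1}^k \beta_j \varkappa_j \leq \beta_0 \varkappa_0$, and a direct computation gives
\[
\beta_0 \varkappa_0 \;=\; \alpha\left(\frac{2}{\alpha}-\frac{\alpha}{2-\alpha}\right) \;=\; 2 - \frac{\alpha^2}{2-\alpha},
\]
which is precisely the hypothesis $\sum \beta_j\varkappa_j \leq 2-\frac{\alpha^2}{2-\alpha}$.
\end{itemize}

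With both hypotheses of \eqref{z94} verified, the conclusion
\[
\int_0^T \|\theta\|_{C^{\beta_1}(\Omega)}^{\varkappa_1}\cdots \|\theta\|_{C^{\beta_k}(\Omega)}^{\varkappa_k}\, dt \;\lesssim\; (1+\|\theta_0\|_{L^\infty})^{\sum_j \varkappa_j}\left(\int_0^T \|\theta\|_{C^{\alpha}(\Omega)}^{\varkappa_0}\, dt + T + 1\right)
\]
is exactly \eqref{z94c}. There is no real obstacle here; the only nontrivial point is recognizing the algebraic identity $\alpha\varkappa_0 = 2-\frac{\alpha^2}{2-\alpha}$ that motivates the particular choice of $\varkappa_0$. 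Analogously to the derivation of \eqref{z94b}, this corollary is tailored to feed into the closing estimates of Theorem \ref{holderR}, where products of lower-order Hölder norms need to be absorbed by the dissipative quantity $\int_0^T\|\theta\|_{C^\alpha}^{\frac{2}{\alpha}-\frac{\alpha}{2-\alpha}}\,dt$ appearing naturally from the nonlinear maximum principle in Lemma \ref{drivencalpha}.
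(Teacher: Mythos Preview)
Your proposal is correct and is exactly the intended derivation: the paper states this corollary immediately after the lemma \eqref{z94} without proof, and the implicit argument is precisely the specialization $\beta_0=\alpha$, $\varkappa_0=\tfrac{2}{\alpha}-\tfrac{\alpha}{2-\alpha}$ that you carry out. The algebraic identity $\alpha\varkappa_0 = 2-\tfrac{\alpha^2}{2-\alpha}$ is the only thing to check, and you have done so.
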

\section{Local Existence with Holder Initial Data and Global Higher Regularity}\la{glcr}

We consider
\begin{equation}\label{linearedrift}
\pa_t v + b\cdot\na v + \l v = f~~\text{in}~~\Omega,~~~v\vert_{\partial \Omega}=0,
\end{equation}
with  $b=\na^{\perp} w \in L^\infty([0,T]\times\Omega)$,   $w=f=0$ on $\partial\Omega$.\\
\begin{prop}\label{eslinear+drift} Let $\alpha_0\in (0,1/10]$. Assume  $v\in L^\infty([0,T],C^{2\alpha_0}(\overline{\Omega}))\cap {{L^\infty_{loc}((0,T]}},C^{1+\alpha_0}(\overline{\Omega}))$ and  $b \in L^\infty([0,T],C^{\alpha_0}(\overline{\Omega}))$. The following inequalities hold for any $T>0$
\begin{align}\nonumber
&	\sup_{t\in [0,T]}	|| v(t)||_{ \dot C^{2\alpha_0}(\Omega)}+t^{1-\alpha_0}	|| v(t)||_{ \dot C^{1+\alpha_0}(\Omega)}
\\&\quad\quad	\lesssim\nonumber\left(1+||b{{ ||}}_{L^\infty_{T}(L^\infty(\Omega))}\right)^4(T+1)^2\left( ||v_0||_{ C^{2\alpha_0}(\Omega)}+\sup_{s\in [0,T]}s^{1-\alpha_0}{{||f(s)||_{ C^{\alpha_0}(\Omega)}}}\right)
\\&\quad\quad\quad+\left(1+||b{{ ||}}_{L^\infty_{T}(L^\infty(\Omega))}\right)^{\frac{6}{\alpha_0}}T^{1-\alpha_0}(1+T)\left(1+	||b||_{L^\infty_{T}( C^{\alpha_0}(\Omega))}\right)^{\frac{2}{\alpha_0}}||v_0||_{L^\infty(\Omega)}:=J_1,\label{z37}
\end{align}
and 
\begin{align}\nonumber
&		\sup_{t\in [0,T]}	t^{1-\alpha_0}	|| v(t)||_{ \dot C^{1+\alpha_0}(\Omega)}	\lesssim (1+||b{{ ||}}_{L^\infty_{T}(L^\infty(\Omega))})^2(1+T) \left(T^{\alpha_0}{{||v_0||_{ C^{3\alpha_0}(\Omega)}}}+\sup_{s\in [0,T]}s^{1-\alpha_0}{{||f(s)||_{ C^{\alpha_0}(\Omega)}}}\right)\\&\quad\quad\quad\quad\quad\quad
+(1+||b{{ ||}}_{L^\infty_{T}(L^\infty(\Omega))})^{\frac{3}{\alpha_0}}T^{1-\alpha_1}\left(1+	||b||_{L^\infty_{T}( C^{\alpha_0}(\Omega))}\right)^{\frac{1}{\alpha_0}}||v||_{L^\infty_{T}( C^{\alpha_0}(\Omega))}:=J_2.\label{z37b}
\end{align}
{{Moreover, we also have 
	\begin{align}\nonumber
	&	\sup_{t\in [0,T]}t^{1-\alpha_0+\gamma}	|| v(t)||_{ \dot C^{1+\alpha_0}(\Omega)}
	\\&	\lesssim\nonumber \left(1+||b ||_{L^\infty_{T}(L^\infty(\Omega))}\right)^4(T+1)^2\left(\sup_{t\in [0,T]} t^\gamma ||v(t)||_{ C^{2\alpha_0}(\Omega)}+\sup_{t\in [0,T]}t^{1-\alpha_0+\gamma}||f(t)||_{ C^{\alpha_0}(\Omega)}\right)
	\\&\quad+\left(1+||b ||_{L^\infty_{T}(L^\infty(\Omega))}\right)^{\frac{6}{\alpha_0}}T^{1-\alpha_0}(1+T)\left(1+	||b||_{L^\infty_{T}( C^{\alpha_0}(\Omega))}\right)^{\frac{2}{\alpha_0}}\sup_{t\in [0,T]} t^\gamma||v(t)||_{L^\infty(\Omega)},\label{z84}
\end{align}
for any $\gamma>0$.}}
\end{prop}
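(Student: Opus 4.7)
The plan is to reduce the bounded-domain problem \eqref{linearedrift} to a finite family of whole-space problems for which Lemma \ref{estidiffusion} applies. Fix the cover $\{B_i^0\}_{i=1}^N$ and cutoffs $\chi_i^0,\chi_i^1$ from Section \ref{prelims}; for each boundary patch ($i\le N_1$) set $v_i:=\mathcal F(\chi_i^0 v)$ and for each interior patch ($i>N_1$) set $v_i:=\chi_i^0 v$. Multiplying \eqref{linearedrift} by $\chi_i^0$, using the Leibniz identity
\begin{equation*}
\chi_i^0\,b\cdot\na v=\na^\perp(\chi_i^1 w)\cdot\na(\chi_i^0 v)-\bigl(\na^\perp(\chi_i^1 w)\cdot\na\chi_i^0\bigr)v,
\end{equation*}
and then applying $\mathcal F$ together with Propositions \ref{lerrep} and \ref{Fnon}, each $v_i$ is seen to satisfy
\begin{equation*}
\pa_t v_i+\widetilde b_i\cdot\na v_i+L^{\fr12}v_i=F_i\quad\text{in }\Rr^d\times[0,T],
\end{equation*}
with extended drift $\widetilde b_i=\mathbf a\,\na^\perp\mathcal F(\chi_i^1 w)$ (the interior case is treated analogously with $\Lambda_{\Rr^d}$ in place of $L^{\fr12}$ via Remark \ref{remaforin}) and forcing
\begin{equation*}
F_i=\mathcal F(\chi_i^0 f)-\mathbf R_{\chi_i^0}(v)+\widetilde b_i\cdot\mathcal F\bigl((\na\chi_i^0)v\bigr).
\end{equation*}

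The estimates of Section \ref{loex} supply the required control of coefficients and forcing. Since $\na w=-J^{-1}b$ and $w|_{\pa\Omega}=0$ (so $\|w\|_{L^\infty(\Omega)}\lesssim \|b\|_{L^\infty(\Omega)}$ by Poincar\'e), the odd extension $\mathcal F(\chi_i^1 w)$ lies in $C^{1+\alpha_0}(\Rr^d)$ and one obtains $\|\widetilde b_i\|_{L^\infty_T L^\infty(\Rr^d)}\lesssim\|b\|_{L^\infty_T L^\infty(\Omega)}$ together with $\|\widetilde b_i\|_{L^\infty_T C^{\alpha_0}(\Rr^d)}\lesssim\|b\|_{L^\infty_T C^{\alpha_0}(\Omega)}$. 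The commutator error obeys $\|\mathbf R_{\chi_i^0}(v)\|_{C^{\alpha_0}(\Rr^d)}\lesssim\|v\|_{C^{\alpha_0}(\Omega)}$ by \eqref{CsR}, and the Poisson cross term is bounded in $C^{\alpha_0}$ by $(1+\|b\|_{L^\infty_T C^{\alpha_0}})\|v\|_{L^\infty_T C^{\alpha_0}(\Omega)}$. Applying estimate \eqref{z57} of Lemma \ref{estidiffusion} to each $v_i$ with $\alpha_1=\alpha_0$, taking the maximum over $i$, and invoking the norm equivalence $\max_i\|v_i\|_{C^\beta(\Rr^d)}\simeq\|v\|_{C^\beta(\Omega)}$ for $\beta\in(0,1)\cup(1,2)$, then absorbing the zero-order contribution via the interpolation $\|v\|_{L^\infty_T C^{\alpha_0}}\lesssim\|v\|_{L^\infty_T L^\infty}^{1/2}\|v\|_{L^\infty_T C^{2\alpha_0}}^{1/2}$ followed by Young's inequality, produces \eqref{z37}. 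The bound \eqref{z37b} follows identically with \eqref{z58} in place of \eqref{z57}.

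The weighted bound \eqref{z84} is obtained by a shift-and-restart argument. For each $t_0\in(0,T]$, apply \eqref{z37} on $[t_0/2,t_0]$ with initial datum $v(t_0/2)$; this gives $(t_0/2)^{1-\alpha_0}\|v(t_0)\|_{\dot C^{1+\alpha_0}(\Omega)}$ bounded by $J_1$ with $v_0$ replaced by $v(t_0/2)$ and $f$ restricted to $[t_0/2,t_0]$. Multiplying through by $t_0^\gamma\lesssim(t_0/2)^\gamma$ and using the pointwise bounds $(t_0/2)^\gamma\|v(t_0/2)\|_{C^{2\alpha_0}}\le\sup_{s\in[0,T]}s^\gamma\|v(s)\|_{C^{2\alpha_0}(\Omega)}$, together with the analogous weighted inequalities for the forcing and for $\|v(t_0/2)\|_{L^\infty}$, yields \eqref{z84} after taking the supremum over $t_0$.

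The main technical point is purely organizational: one must verify that each term introduced by the localization and extension procedure — namely the $\Lambda_D$-commutator $\mathbf R_{\chi_i^0}(v)$, the Poisson cross term $\widetilde b_i\cdot\mathcal F((\na\chi_i^0)v)$, and the discrepancy between $b$ and the extended drift $\widetilde b_i$ — sits in $C^{\alpha_0}(\Rr^d)$ with norm controlled by $\|v\|_{C^{\alpha_0}(\Omega)}$ and $\|b\|_{C^{\alpha_0}(\Omega)}$ without any loss of derivatives. The two enabling facts are the commutator bound \eqref{CsR} and the identity $\na w=-J^{-1}b$ combined with $w|_{\pa\Omega}=0$, which together guarantee that the H\"older regularity of $\widetilde b_i$ matches that of $b$ on $\Omega$ and that Lemma \ref{estidiffusion} can be invoked cleanly patch by patch.
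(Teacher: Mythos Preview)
Your proposal is correct and follows essentially the same route as the paper: localize with $\chi_i^0$, extend via $\mathcal F$ to obtain whole-space equations with forcing $F_i=\mathcal F(\chi_i^0 f)+\mathbf R_{\chi_i^0}(v)+\widetilde b_i\cdot\mathcal F((\na\chi_i^0)v)$, invoke \eqref{z57} and \eqref{z58} of Lemma \ref{estidiffusion} with $\alpha_1=\alpha_0$, patch together by norm equivalence, and close via the interpolation $\|v\|_{C^{\alpha_0}}\lesssim\|v\|_{L^\infty}^{1/2}\|v\|_{C^{2\alpha_0}}^{1/2}$; the paper does exactly this, and its shift-and-restart for \eqref{z84} (applying \eqref{z37} on $[s,T]$ for $s\in[0,T/4]$) is equivalent to your $[t_0/2,t_0]$ variant. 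One small point you glossed over: to land on $\|v_0\|_{L^\infty(\Omega)}$ rather than $\|v\|_{L^\infty_T L^\infty(\Omega)}$ in the last term of \eqref{z37}, the paper first records the weak maximum principle $\|v(t)\|_{L^\infty}\le\|v_0\|_{L^\infty}$ for \eqref{linearedrift}; you should state this explicitly.
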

\begin{proof} We have 
\begin{equation}\label{z88}
\sup_{t\ge 0}	||v(t)||_{L^p(\Omega)}\leq 	||v_0||_{L^p(\Omega)}\quad\quad\forall p\in [1,\infty].
\end{equation}
1) \textit{Boundary estimate}: Let $1\leq i\leq N_1$. As \eqref{widesqg}, we multiply \eqref{linearedrift} by $\chi=\chi_i^0$, using  \eqref{nonerro}, \eqref{FLam} and $f\vert_{\partial \Omega}=0$, we arrive at
\begin{align}
&	\pa_t \tilde v + \tilde b\cdot\na \tilde v +L^{\frac{1}{2}}\tilde v = f_2,\\&
\tilde v=\mathcal{F}(\chi_i^0 v),~~~\tilde b=\mathbf{a}\na^{\perp}\mathcal F(\chi_i^1w),
\end{align}
where $\mathbf{a}$ is given in \eqref{Odeltachi} and satisfies \eqref{z87b};
\begin{equation}
f_2=\tilde b\cdot \mathcal F((\na\chi_i^0) v)+\mathcal F(\chi_i^0 f)+{{\mathbf{R}_{\chi_i^0}(v)}}.
\end{equation}
{{Using \eqref{CsR} \eqref{z87b}, and $w=f=0$ on $\partial\Omega$}}, we have 
\begin{align}\label{z84a}
&	||\tilde b||_{C^\beta(\mathbb{R}^2)}\lesssim 	|| b||_{C^\beta(\Omega)}~~~\forall~~\beta\in [0,1),\\&
||f_2||_{C^\beta(\mathbb{R}^2)}\lesssim  ||f||_{C^{\beta}(\Omega)}+(	1+||b||_{L^\infty_{T}(L^\infty)}) ||v||_{C^{\beta}(\Omega)}+||v_0||_{L^\infty(\Omega)} ||b||_{C^{\beta}(\Omega)}~\forall~\beta\in (0,1).\label{z84b}
\end{align}
We apply  \eqref{z57}, \eqref{z58} with $\alpha_1=\alpha_0$  and use \eqref{z84a}, \eqref{z84b} and \eqref{z88} to get that 
\begin{align}\nonumber
&||v||_{L^\infty_{T}(\dot C^{2\alpha_0}(B_i^0\cap \Omega))}+\sup_{s\in [0,T]}s^{1-\alpha_0}	||v(s)||_{\dot C^{1+\alpha_0}(B_i^0\cap \Omega)}\\&\nonumber\lesssim \left(1+||b||_{L^\infty_{T}(L^\infty(\Omega))}\right)^2\left( ||v_0||_{ C^{2\alpha_0}(\Omega)}+\sup_{s\in [0,T]}s^{1-\alpha_0}{{||f(s)||_{ C^{\alpha_0}(\Omega)}}}+T^{1-\alpha_0}||v||_{L^\infty_T( C^{\alpha_0}(\Omega))}\right)\\&
\quad\quad\quad\quad	+\left(1+||b||_{L^\infty_{T}(L^\infty(\Omega))}\right)^{\frac{6}{\alpha_0}}T^{1-\alpha_0}(1+T)\left(1+||b||_{L^\infty_{T}( C^{\alpha_0}(\Omega))}\right)^{\frac{2}{\alpha_0}}||v_0||_{L^\infty(\Omega)}:=J_0,\\&
\sup_{s\in [0,T]}s^{1-\alpha_0}	||v(s)||_{\dot C^{1+\alpha_0}(B_i^0)}\lesssim J_2.
\end{align}
2) \textit{Interior estimate}:
Define $\overline v=\chi_i^0 v$ for $N_1<i\leq N$. We have
\begin{equation}
\pa_t \overline v+ (\chi_i^1b)\cdot\na \overline v + \Lambda_{\mathbb{R}^2}\overline v =  f_1~~\text{in}~~\mathbb{R}^2,
\end{equation}
where 
\begin{align}
f_1=\chi_i^0f+\Lambda_{\mathbb{R}^2}\overline v -\chi_i^0 \l v.
\end{align}
In view of \eqref{z82b}, we have for any $\beta\in (0,1)$
\begin{align}
||	 f_1(s)||_{C^{\beta}(\mathbb{R}^2)}\lesssim ||f(s)||_{C^{\beta}(\Omega)}+ ||v(s)||_{C^{\beta}(\Omega)}.
\end{align}
We apply  \eqref{z57}, \eqref{z58} with $\alpha_1=\alpha_0$ to get that 
\begin{align}
&||v||_{L^\infty_{T}(\dot C^{2\alpha_0}(B_i^0))}+\sup_{s\in [0,T]}s^{1-\alpha_0}	||v(s)||_{\dot C^{1+\alpha_0}(B_i^0)}\lesssim J_0,\\&
\sup_{s\in [0,T]}s^{1-\alpha_0}	||v(s)||_{\dot C^{1+\alpha_0}(B_i^0)}\lesssim J_2.
\end{align}
We  cover $\Omega$ by $\cup_{i=1,...,N}B_{i}^0$, then we {{obtain \eqref{z37b} }}and 
\begin{align}
\sup_{t\in [0,T]}	|| v(t)||_{ \dot C^{2\alpha_0}}+t^{1-\alpha_0}	|| \theta(t)||_{ \dot C^{1+\alpha_0}}\lesssim J_0.
\end{align}
This follows  \eqref{z37}  by using  $||v||_{L^\infty_T( C^{\alpha_0}(\Omega))}\lesssim ||v_0||_{L^\infty(\Omega)}^{\frac{1}{2}}||v||_{L^\infty_T( C^{2\alpha_0}(\Omega))}^{\frac{1}{2}}$ and holder's inequality. \\
 We apply \eqref{z37} in $[s,T]$ with $s\in [0,T/4]$ to obtain 
\begin{align}\nonumber
	&	\sup_{t\in [s,T]}(t-s)^{1-\alpha_0}	|| v(t)||_{ \dot C^{1+\alpha_0}(\Omega)}
	\\&\quad	\lesssim\nonumber\left(1+||b ||_{L^\infty_{[s,T]}(L^\infty(\Omega))}\right)^4(T+1)^2\left( ||v(s)||_{ C^{2\alpha_0}(\Omega)}+\sup_{t\in [s,T]}(t-s)^{1-\alpha_0}||f(t)||_{ C^{\alpha_0}(\Omega)}\right)
	\\&\quad+\left(1+||b ||_{L^\infty_{[s,T]}(L^\infty(\Omega))}\right)^{\frac{6}{\alpha_0}}T^{1-\alpha_0}(1+T)\left(1+	||b||_{L^\infty_{[s,T]}( C^{\alpha_0}(\Omega))}\right)^{\frac{2}{\alpha_0}}||v(s)||_{L^\infty(\Omega)}.\nonumber
	\\&\quad	\lesssim\nonumber s^{-\gamma}\left(1+||b ||_{L^\infty_{T}(L^\infty(\Omega))}\right)^4(T+1)^2\left(\sup_{t\in [0,T]} t^\gamma ||v(t)||_{ C^{2\alpha_0}(\Omega)}+\sup_{t\in [0,T]}t^{1-\alpha_0+\gamma}||f(t)||_{ C^{\alpha_0}(\Omega)}\right)
	\\&\quad+s^{-\gamma}\left(1+||b ||_{L^\infty_{T}(L^\infty(\Omega))}\right)^{\frac{6}{\alpha_0}}T^{1-\alpha_0}(1+T)\left(1+	||b||_{L^\infty_{T}( C^{\alpha_0}(\Omega))}\right)^{\frac{2}{\alpha_0}}\sup_{t\in [0,T]} t^\gamma||v(t)||_{L^\infty(\Omega)},
\end{align}
for any $\gamma>0$.\\
Using the fact that
\begin{equation}
	\sup_{s\in [0,T/4]} \left(s^{\gamma}	\sup_{t\in [s,T]}(t-s)^{1-\alpha_0}	|| v(t)||_{ \dot C^{1+\alpha_0}(\Omega)}\right)\gtrsim 	\sup_{t\in [0,T]}t^{1-\alpha_0+\gamma}	|| v(t)||_{ \dot C^{1+\alpha_0}(\Omega)},
\end{equation}
we deduce \eqref{z84}.
\end{proof}\vspace{0.2cm}
 {{\begin{lemma} \label{bouncon}Let $f_1,f_2\in C^{2n+1}(\overline{\Omega})$ be such that $\Delta^k f_1=\Delta^k f_2=0$ on $\partial\Omega$ for any $k=0,...,n$. Then,
	\begin{equation}\label{z100}
		\Delta^k\{f_1, f_2\} =0
	\end{equation}
holds on $\pa\Omega$ for all $k=0,...,n$, where we recall \eqref{poipsitheta} that the Poisson bracket  is given by 
$\{f_1, f_2\} = \na^\bot f_1\cdot\na f_2.$ 
\end{lemma}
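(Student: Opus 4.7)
I would proceed by induction on $n$. For the base case $n=0$, since $f_1,f_2$ vanish on $\partial\Omega$, both $\nabla f_1$ and $\nabla f_2$ are normal to $\partial\Omega$ there, so $\nabla^{\perp}f_1$ is tangential and $\{f_1,f_2\}=\nabla^{\perp}f_1\cdot\nabla f_2=0$ on $\partial\Omega$.

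The engine of the inductive step is the two-dimensional bilinear identity
\begin{equation*}
\Delta\{f_1,f_2\}=\{\Delta f_1,f_2\}+\{f_1,\Delta f_2\}+2\sum_{i=1}^{2}\{\partial_i f_1,\partial_i f_2\},
\end{equation*}
verified by applying the Leibniz rule $\Delta(ab)=(\Delta a)b+2\nabla a\cdot\nabla b+a\Delta b$ to each term of $\{f_1,f_2\}=\partial_1 f_1\partial_2 f_2-\partial_2 f_1\partial_1 f_2$ and collecting. Assuming the lemma at level $n-1$, take $f_1,f_2$ satisfying the level-$n$ hypotheses. The cases $k\le n-1$ follow directly from the induction hypothesis, since the level-$n$ hypothesis contains the level-$(n-1)$ one. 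For the top case $k=n$, apply $\Delta^{n-1}$ to the identity above: the pairs $(\Delta f_1,f_2)$ and $(f_1,\Delta f_2)$ satisfy the level-$(n-1)$ hypotheses (because $\Delta^{j}(\Delta f_i)|_{\partial\Omega}=\Delta^{j+1}f_i|_{\partial\Omega}=0$ for $j\le n-1$), so $\Delta^{n-1}\{\Delta f_1,f_2\}|_{\partial\Omega}=\Delta^{n-1}\{f_1,\Delta f_2\}|_{\partial\Omega}=0$ by induction.

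The hard step is the remaining cross-term $\Delta^{n-1}\sum_i\{\partial_i f_1,\partial_i f_2\}|_{\partial\Omega}$, which resists direct induction because $\partial_i f_j$ does not vanish on $\partial\Omega$. To handle it I would pass to Fermi coordinates $(s,t)$ in a tubular neighborhood of $\partial\Omega$, so that $\partial\Omega=\{t=0\}$ and the Euclidean Laplacian takes the form $\Delta=\partial_t^{2}+(1-t\kappa(s))^{-2}\partial_s^{2}+(\text{lower order})$, with $\kappa$ the boundary curvature. Writing Taylor expansions $f_i(s,t)=\sum_{j\ge 1}a_{i,j}(s)t^{j}/j!$, the boundary conditions $\Delta^{\ell}f_i|_{t=0}=0$ for $\ell\le n$ translate into a triangular, curvature-corrected linear system expressing each even-index coefficient $a_{i,2\ell}$ ($1\le\ell\le n$) as a linear combination of the odd-index coefficients $a_{i,1},a_{i,3},\ldots$ and their tangential derivatives. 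Using the transformation rule $\{f_1,f_2\}_x=(1-t\kappa)^{-1}\{f_1,f_2\}_{(s,t)}$ for the Poisson bracket and expanding the cross-term in powers of $t$, one verifies that its normal-derivative modes controlling $\Delta^{\ell}(\text{cross-term})|_{t=0}$ for $\ell\le n-1$ are bilinear combinations of precisely the boundary data already forced to vanish by the hypotheses on $f_i$, closing the induction.

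The truly delicate point is this last bilinear bookkeeping in curved coordinates: a naive odd reflection across $\partial\Omega$ fails because of the curvature corrections in the recursive Taylor relations, so one must verify that exactly the bilinear combinations appearing in the cross-term cancel against the boundary conditions. This is consistent with the paper's earlier use of odd extensions $\mathcal O$ across flattened boundaries, which work cleanly precisely because the cross coefficient $a_{12}$ of the conjugated Laplacian vanishes near $\{y_2=0\}$ by construction of $Y$ (see \eqref{vanish12}).
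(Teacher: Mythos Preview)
Your approach via the bilinear identity $\Delta\{f_1,f_2\}=\{\Delta f_1,f_2\}+\{f_1,\Delta f_2\}+2\sum_i\{\partial_i f_1,\partial_i f_2\}$ differs from the paper's, which instead freezes the orthonormal frame $(N,N^\perp)$ at each boundary point, asserts $\partial_N^{2k_1}\partial_{N^\perp}^k f_i(x)=0$ for all $2k_1+k\le 2n+1$, and then expands $\Delta^k\{f_1,f_2\}$ by Leibniz in that frame. You correctly isolate the obstruction as the cross-term $\sum_i\{\partial_i f_1,\partial_i f_2\}$, but the ``bilinear bookkeeping in curved coordinates'' you sketch cannot close, because the statement itself fails on curved boundaries. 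On the unit disk take the Dirichlet eigenfunctions $f_1=J_0(\mu r)$ and $f_2=J_1(\nu r)\sin\theta$ with $\mu=j_{0,1}$, $\nu=j_{1,1}$ (so $f_i=\Delta f_i=0$ on $r=1$). Then $\{f_1,f_2\}=\tfrac{\mu}{r}J_0'(\mu r)J_1(\nu r)\cos\theta$, and a short computation using the Bessel equations at $r=1$ gives $\Delta\{f_1,f_2\}\big|_{r=1}=-4\mu\nu\,J_0'(\mu)J_1'(\nu)\cos\theta\not\equiv 0$. Thus already the case $n=1$, $k=1$ of the lemma fails, and no amount of Taylor bookkeeping in Fermi coordinates will produce the cancellation you anticipate.

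The paper's argument has the same underlying defect: its key vanishing claim treats $\partial_{N^\perp}$ as if it were the arclength derivative along $\partial\Omega$, but $N^\perp=n(x)^\perp$ is a \emph{frozen} direction, and already $\partial_{N^\perp}^2 f_i(x)=\kappa(x)\,\partial_N f_i(x)\ne 0$ for $f_i|_{\partial\Omega}=0$ on a curved boundary. Both your argument and the paper's become valid when the boundary is flat, which is consistent with your closing remark about odd extensions across $\{y_2=0\}$; but the lemma is asserted and invoked in Section~\ref{glcr} for the original curved domain $\Omega$, so neither proof can be repaired as written.
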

\begin{proof} 
First, we note by induction that our assumptions regarding the boundary conditions imply that for any $x\in \partial \Omega$,  we have
	\begin{equation}\la{indvan}
		\partial_{N}^{2k_1}	\partial_{N^{\perp}}^{k} f_1(x) = 	\partial_{N}^{2k_1}	\partial_{N^\bot}^{k} f_2(x) = 0
	\end{equation}
for all  $k\ge 0$, $k_1\ge 0$,  $2k_1+k\leq 2n+1$, where $N=n(x)$ is the unit outward normal vector at $x\in \partial\Omega$ and $N^{\perp}$ is the tangent vector $(-N_2, N_1)$. Indeed, it follows by induction that such a derivative can be represented as a linear combination of tangential derivatives of powers of the Laplacian.
The proof of this representation uses the rotation invariance of the Laplacian, which implies that $\pa^2_N = \Delta -\pa^2_{N^{\perp}}$, an identity that is then employed at the inductive step, establishing the representation, and thus \eqref{indvan}.   \\
Then, using the rotation invariance of both the Laplacian ad the Poisson bracket, we have that
	\begin{equation}\label{z101}
		\Delta^k\{f_1, f_2\} = (\partial_{N}^2+\partial_{N^\bot}^2)^{k}\left(\partial_{N^\bot } f_1\partial_{N } f_2 -\partial_{N } f_1\partial_{N^\bot } f_2\right)
	\end{equation}	
	holds  at each $x\in \overline{\Omega}$. Opening brackets with the Leibniz formula and using \eqref{indvan} in  \eqref{z101}, we obtain  \eqref{z100}.
\end{proof}}}

Now, we use Proposition \ref{eslinear+drift} to prove  local existence and higher regularity of the system \eqref{sqg} and \eqref{u}. We define 
\begin{align}\nonumber
S_{T,R}&=\left\{v\in L^\infty_{T}(L^\infty(\Omega)):v\vert_{\partial \Omega}=\Delta v\vert_{\partial \Omega}=0,\right.\\ &\left.\quad\quad\quad\quad\quad \quad\quad \quad\quad \quad\quad\quad v(t=0)=\theta_0, ||v||_{ L^\infty_{T}(L^\infty(\Omega))}\leq ||\theta_0||_{L^\infty(\Omega)},~~|||v|||_{3,T}\leq R\right\},
\end{align}
with 
\begin{align}&
|||v|||_{0,T}:=\sup_{t\in [0,T]} ||v(t)||_{\dot C^{2\alpha_0}(\Omega)},\\&
|||v|||_{1,T}:=\sup_{t\in [0,T]}t^{1-\alpha_0}	|| v(t)||_{\dot  C^{1+\alpha_0}(\Omega)},\\&
|||v|||_{3,T}:=\sup_{t\in [0,T]} ||v(t)||_{\dot C^{2\alpha_0}(\Omega)}+t^{3-\alpha_0}	|| v(t)||_{\dot  C^{3+\alpha_0}(\Omega)}.
\end{align}
We define the map $\mathcal{T}(\theta)=v$ as a solution of 
\begin{align}\label{z85}
		&	\pa_t v + b\cdot\na v + \l v = 0~~\text{in}~~\Omega,\\&
		b=\na^\bot \Lambda_D^{-1}\theta, \theta\in S_{T,R},
\end{align}
with $v(t=0)=\theta_0\in C^{3\alpha_0}(\Omega)$ and 
$\theta_0=0$ on $\partial\Omega$.  \\
In view of \eqref{z86a} and \eqref{z86b} and we have 
\begin{align}\label{z87}
&	||b||_{L^\infty_{T}( C^{\alpha_0}(\Omega))}\lesssim ||\theta_0||_{L^\infty(\Omega)}^{\frac{1}{2}} |||\theta|||_{0,T}^{\frac{1}{2}},
||b||_{L^\infty_{T}(L^\infty(\Omega))}\lesssim ||\theta_0||_{L^\infty(\Omega)}^{1-\kappa} |||\theta|||_{0,T}^{\kappa}
\end{align}
for any $\kappa\in (0,1]$. 
\begin{lemma}\label{keyle} There exist $R>0$  large enough  and $T>0$ small enough  depending on $||\theta_0||_{C^{3\alpha_0}(\Omega)}$ and $\Omega$ such that 
$\mathcal{T}(	S_{T,R})\subset S_{T,R}$  and 
\begin{align}\label{z43}
|||	\mathcal{T}(\theta_1)-\mathcal{T}(\theta_2)|||_{3,T}\leq \frac{1}{4} |||	\theta_1-\theta_2|||_{3,T}~~~\quad\forall\theta_1,\theta_2\in S_{T,R}.
\end{align}
\end{lemma}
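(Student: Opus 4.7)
The plan is to treat \eqref{z85} as a linear advection-dissipation equation with drift $b=\na^{\perp}\Lambda_D^{-1}\theta$, apply Proposition \ref{eslinear+drift} at two regularity levels (directly on $v$, and then on the auxiliary unknown $V=\Delta v$ to recover $C^{3+\alpha_0}$ control), and obtain the contraction by running the same scheme on $w=v_1-v_2$ with zero initial data. The only non-routine input is the propagation of the boundary conditions $v|_{\partial\Omega}=\Delta v|_{\partial\Omega}=0$ under $\mathcal{T}$, which I handle first using Lemma \ref{bouncon}.

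First I verify propagation of the boundary data. Since $\theta\in S_{T,R}$ satisfies $\theta|_{\partial\Omega}=\Delta\theta|_{\partial\Omega}=0$, the spectral identity $-\Delta_D\Lambda_D^{-1}\theta=\Lambda\theta$ gives $\Lambda_D^{-1}\theta|_{\partial\Omega}=\Delta\Lambda_D^{-1}\theta|_{\partial\Omega}=0$. Writing the nonlinearity as a Poisson bracket $b\cdot\na v=\{\Lambda_D^{-1}\theta,v\}$ and applying Lemma \ref{bouncon} with $n=1$, $f_1=\Lambda_D^{-1}\theta$, $f_2=v$, both $\{\Lambda_D^{-1}\theta,v\}$ and $\Delta\{\Lambda_D^{-1}\theta,v\}$ vanish on $\partial\Omega$. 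Together with $\Lambda v|_{\partial\Omega}=\Delta\Lambda v|_{\partial\Omega}=0$ (spectral sums), restricting the equation and its Laplacian to $\partial\Omega$ gives $\partial_t v|_{\partial\Omega}=\partial_t\Delta v|_{\partial\Omega}=0$, so the conditions persist.

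Next I apply Proposition \ref{eslinear+drift} with $f=0$ and drift $b$, using \eqref{z87} to get $\|b\|_{L^\infty_T(L^\infty)}\lesssim\|\theta_0\|_{L^\infty}$ and $\|b\|_{L^\infty_T(C^{\alpha_0})}\lesssim\|\theta_0\|_{L^\infty}^{1/2}R^{1/2}$. Estimate \eqref{z37} controls $|||v|||_{0,T}+|||v|||_{1,T}$ in terms of $\|\theta_0\|_{C^{2\alpha_0}}$ plus a term small in $T$, and the maximum principle for \eqref{z85} yields $\|v(t)\|_{L^\infty}\leq\|\theta_0\|_{L^\infty}$. For the top-order part of $|||v|||_{3,T}$, set $V=\Delta v$; commuting $\Delta$ through the equation (using that $\Lambda$ and $\Delta_D$ commute spectrally on functions vanishing on $\partial\Omega$) gives
\begin{equation}
\partial_t V+b\cdot\na V+\Lambda V=f_V,\qquad f_V:=-\Delta b\cdot\na v-2\,\partial_i b\cdot\na\partial_i v,
\end{equation}
with $V|_{\partial\Omega}=0$. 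Using $\Delta b=-\na^{\perp}\Lambda\theta$ together with \eqref{z86a}--\eqref{z86b}, one estimates $\|f_V(t)\|_{C^{\alpha_0}}$ by $\|\theta(t)\|_{C^{2+\alpha_0}}\|v(t)\|_{C^{1+\alpha_0}}+\|\theta(t)\|_{C^{1+\alpha_0}}\|v(t)\|_{C^{2+\alpha_0}}$, and interpolation with the weights defining $|||\cdot|||_{1,T}$ and $|||\cdot|||_{3,T}$ gives $\sup_t t^{3-\alpha_0}\|f_V(t)\|_{C^{\alpha_0}}\lesssim |||\theta|||_{3,T}(|||v|||_{1,T}+|||v|||_{3,T})$. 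Applying \eqref{z84} to $V$ with weight $\gamma=2$ so that $t^{1-\alpha_0+\gamma}=t^{3-\alpha_0}$, and using elliptic regularity $\|v\|_{C^{3+\alpha_0}}\lesssim\|V\|_{C^{1+\alpha_0}}+\|v\|_{L^\infty}$, I obtain
\begin{equation}
|||v|||_{3,T}\leq C_1(\|\theta_0\|_{C^{3\alpha_0}})+C_2(\|\theta_0\|_{L^\infty},R)\,T^{\sigma}(1+|||v|||_{3,T})
\end{equation}
for some $\sigma>0$; absorbing the $|||v|||_{3,T}$ term for small $T$ and choosing $R=2C_1$, the bound $|||v|||_{3,T}\leq R$ closes once the remainder is $\leq R/2$, which fixes $T$.

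For the contraction, $w=v_1-v_2$ satisfies the same linear equation with drift $b_1=\na^{\perp}\Lambda_D^{-1}\theta_1$, forcing $-(b_1-b_2)\cdot\na v_2$, and zero initial data, with boundary conditions propagated by the same argument. Since $b_1-b_2=\na^{\perp}\Lambda_D^{-1}(\theta_1-\theta_2)$, \eqref{z86a}--\eqref{z86b} give $\|b_1-b_2\|_{C^{1+\alpha_0}}\lesssim|||\theta_1-\theta_2|||_{0,T}\leq|||\theta_1-\theta_2|||_{3,T}$; running the same two-level scheme on $w$ and $\Delta w$ yields
\begin{equation}
|||w|||_{3,T}\leq C_3(\|\theta_0\|_{L^\infty},R)\,T^{\sigma}\,|||\theta_1-\theta_2|||_{3,T},
\end{equation}
and shrinking $T$ further so that $C_3 T^{\sigma}\leq 1/4$ produces \eqref{z43}. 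The main obstacle is the higher-regularity estimate for $V=\Delta v$: only because the nonlinearity is a Poisson bracket of two functions with two layers of Dirichlet vanishing does Lemma \ref{bouncon} deliver $\Delta\{\Lambda_D^{-1}\theta,v\}|_{\partial\Omega}=0$, and only this allows Proposition \ref{eslinear+drift} to be invoked for $V$; without this structural fact the $C^{3+\alpha_0}$ portion of $|||\cdot|||_{3,T}$ could not be closed.
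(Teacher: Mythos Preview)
Your overall strategy---apply Proposition \ref{eslinear+drift} to $v$, then to $V=\Delta v$, and use Lemma \ref{bouncon} to legitimize the second application---is exactly the paper's approach. However, there is a genuine circularity in the way you close the self-map estimate.

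You write ``using \eqref{z87} to get $\|b\|_{L^\infty_T(L^\infty)}\lesssim\|\theta_0\|_{L^\infty}$'', but \eqref{z87} only gives $\|b\|_{L^\infty_T(L^\infty)}\lesssim\|\theta_0\|_{L^\infty}^{1-\kappa}|||\theta|||_{0,T}^{\kappa}$ for $\kappa\in(0,1]$; the Riesz-type operator $\na^{\perp}\Lambda_D^{-1}$ is not bounded on $L^\infty$, so $\kappa=0$ is unavailable. Consequently the leading constant $C_1$ in your inequality $|||v|||_{3,T}\le C_1+C_2\,T^{\sigma}(1+|||v|||_{3,T})$ carries a power of $(1+\|b\|_{L^\infty})$, hence of $R$, through the first term of \eqref{z37} and of \eqref{z84}. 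Choosing $R=2C_1$ then becomes circular. The paper resolves this by taking $\kappa$ small enough that the total power of $R$ appearing in the leading (non-$T$-small) term is strictly sublinear---concretely $\frac{4\kappa(3-\alpha_0)}{1-\alpha_0}+4\kappa\le\frac{1}{10}$---so that $|||v|||_{0,T}\lesssim R^{1/5}$ and then $|||v|||_{3,T}\lesssim R^{2/5}+T^{\sigma}\,\text{(polynomial in }R\text{)}$; now one first takes $R$ large, then $T$ small. Your contraction argument is fine in spirit, but the same sublinear-in-$R$ bookkeeping is needed when you invoke Proposition \ref{eslinear+drift} for $w$ and $\Delta w$, since the drift is still $b_1=\na^{\perp}\Lambda_D^{-1}\theta_1$.
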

\begin{proof} 
1) Let $\theta\in S_{T,R}$, $v=\mathcal{T}(\theta).$ We have 
\begin{equation}
||v||_{ L^\infty_{T}(L^\infty(\Omega))}\leq ||\theta_0||_{L^\infty(\Omega)}.
\end{equation}
Thus,  to get $\mathcal{T}(	S_{T,R})\subset S_{T,R}$, we have to show that $|||v|||_{3,T}\leq R$ for some  $R>0$  large and $T>0$ small enough.\\
Using Proposition \ref{eslinear+drift} with $f=0$ and $T\in (0,1)$; and using \eqref{z87} we have 
\begin{align}&
|||v|||_{0,T}	\lesssim\left(1+||\theta_0||_{L^\infty(\Omega)}^{1-\kappa} |||\theta|||_{0,T}^{\kappa}\right)^4||\theta_0||_{ C^{2\alpha_0}(\Omega)}+\left(1+|||\theta|||_{0,T}\right)^{\frac{8}{\alpha_0}}T^{1-\alpha_0}||\theta_0||_{L^\infty(\Omega)}\label{z38},\\&
|||v|||_{1,T}	\lesssim (1+|||\theta|||_{0,T})^{\frac{4}{\alpha_0}}\left( T^{\alpha_0}||\theta_0||_{{{ C^{3\alpha_0}}}(\Omega)}
+T^{1-\alpha_0}|||v|||_{0,T}\right),\label{z38b}
\end{align}
for any $\kappa\in (0,1]$.\\
In order to obtain higher regularity of $v$, we apply $\Delta_D$ to the equation \eqref{z85} and obtain
\begin{equation}
	\pa_t \Delta v + b\cdot\na \Delta v + \l \Delta v =\na^\bot \Lambda_D\theta\cdot\na  v-\sum_{j=0,1}2\partial_{x_j}b \cdot\na \partial_{x_j}v:=f.
\end{equation}
In view of  Lemma \eqref{bouncon}, we have $f=0$ on $\partial\Omega$. So, we  can apply   \eqref{z84} in proposition \ref{eslinear+drift} with  $\gamma=2$  and for any $T>0$ to obtain
	\begin{align}\nonumber
	&	\sup_{t\in [0,T]}t^{3-\alpha_0}	|| \Delta v (t)||_{ \dot C^{1+\alpha_0}(\Omega)}
	\\&	\lesssim\nonumber \left(1+||\theta_0||_{L^\infty(\Omega)}^{1-\kappa} |||\theta|||_{0,T}^{\kappa}\right)^4(T+1)^2\left(\sup_{t\in [0,T]} t^2 ||\Delta v (t)||_{ C^{2\alpha_0}(\Omega)}+\sup_{t\in [0,T]}t^{3-\alpha_0}||f(t)||_{ C^{\alpha_0}(\Omega)}\right)
	\\&\quad+\left(1+|||\theta|||_{0,T}\right)^{\frac{8}{\alpha_0}}T^{1-\alpha_0}(1+T)\sup_{t\in [0,T]} t^2||\Delta v (t)||_{L^\infty(\Omega)}.\label{z86}
\end{align}
By interpolation, 
\begin{align}\nonumber
&	\sup_{t\in [0,T]}t^{3-\alpha_0}||f(t)||_{ C^{\alpha_0}(\Omega)}=	\sup_{t\in [0,T]}t^{3-\alpha_0}||\na^\bot \Lambda_D\theta\cdot\na  v-\partial_{x_j}b \cdot\na \partial_{x_j} v(t)||_{ C^{\alpha_0}(\Omega)}\\&\quad\quad\quad\quad\quad\quad\quad\quad\quad\quad\lesssim (1+T)T^{2\alpha_0}\left(	|||\theta|||_{0,T}	+|||v|||_{0,T}\right)^{1+\alpha_0}\left(	|||\theta|||_{3,T}	+|||v|||_{3,T}\right)^{1-\alpha_0},\\&\label{z90}
\sup_{t\in [0,T]}s^2||\Delta v(t)||_{ C^{2\alpha_0}(\Omega)} \lesssim  |||v|||_{0,T}^{\frac{1-\alpha_0}{3-\alpha_0}} |||v|||_{3,T}^{\frac{2}{3-\alpha_0}},\\&
\sup_{t\in [0,T]}s^2||\Delta v(t)||_{ L^\infty(\Omega)} \lesssim T^{2\alpha_0} |||v|||_{0,T}^{\frac{1+\alpha_0}{3-\alpha_0}} |||v|||_{3,T}^{\frac{2(1-\alpha_0)}{3-\alpha_0}}.\label{z90b}
\end{align}
Thus, we obtain for any $T>0$
\begin{align}\nonumber
|||v|||_{3,T}&\lesssim |||v|||_{0,T}+ \left(1+||\theta_0||_{L^\infty(\Omega)}^{1-\kappa} |||\theta|||_{0,T}^{\kappa}\right)^4(T+1)^2  |||v|||_{0,T}^{\frac{1-\alpha_0}{3-\alpha_0}} |||v|||_{3,T}^{\frac{2}{3-\alpha_0}}
\\&\quad +\nonumber\left(1+|||\theta|||_{0,T}\right)^4(T+1)^3T^{2\alpha_0}\left(	|||\theta|||_{0,T}	+|||v|||_{0,T}\right)^{1+\alpha_0}\left(	|||\theta|||_{3,T}	+|||v|||_{3,T}\right)^{1-\alpha_0}
\\&\quad+\left(1+|||\theta|||_{0,T}\right)^{\frac{8}{\alpha_0}}T^{1+\alpha_0}(1+T) |||v|||_{0,T}^{\frac{1+\alpha_0}{3-\alpha_0}} |||v|||_{3,T}^{\frac{2(1-\alpha_0)}{3-\alpha_0}}.
\end{align}
By the Holder inequality, 
\begin{align}\nonumber
|||v|||_{3,T}&\lesssim |||v|||_{0,T}+ \left(1+||\theta_0||_{L^\infty(\Omega)}^{1-\kappa} |||\theta|||_{0,T}^{\kappa}\right)^{\frac{4(3-\alpha_0)}{1-\alpha_0}}(T+1)^{\frac{2(3-\alpha_0)}{1-\alpha_0}}  |||v|||_{0,T}
\\&\quad +\nonumber\left(1+|||\theta|||_{0,T}\right)^4(T+1)^3T^{2\alpha_0}\left(	|||\theta|||_{0,T}	+|||v|||_{0,T}\right)^{1+\alpha_0}\left(	|||\theta|||_{3,T}	+|||v|||_{3,T}\right)^{1-\alpha_0}
\\&\quad+\left(1+|||\theta|||_{0,T}\right)^{\frac{8(3-\alpha_0)}{\alpha_0(1+\alpha_0)}}T^{3-\alpha_0}(1+T)^{\frac{3-\alpha_0}{1+\alpha_0}} |||v|||_{0,T}.\label{z60}
\end{align}
Combining this with \eqref{z38} we deduce for any $T\in (0,1)$
\begin{align}\nonumber
&|||v|||_{3,T}\lesssim |||v|||_{0,T}+ \left(1+||\theta_0||_{L^\infty(\Omega)}^{1-\kappa} R^{\kappa}\right)^{\frac{4(3-\alpha_0)}{1-\alpha_0}} |||v|||_{0,T}
\\&\quad\quad\quad\quad +\left(1+R\right)^4T^{2\alpha_0}\left(R	+|||v|||_{3,T}\right)^{2}+\left(1+R\right)^{\frac{8(3-\alpha_0)}{\alpha_0(1+\alpha_0)}}T^{3-\alpha_0}|||v|||_{0,T},\label{z88a}\\&
|||v|||_{0,T}	\lesssim\left(1+||\theta_0||_{L^\infty(\Omega)}^{1-\kappa} R^{\kappa}\right)^4||\theta_0||_{ C^{2\alpha_0}(\Omega)}+\left(1+R\right)^{\frac{8}{\alpha_0}}T^{1-\alpha_0}||\theta_0||_{L^\infty(\Omega)}.\label{z88b}
\end{align}
Choosing $\kappa>0$ small such that $\frac{4\kappa(3-\alpha_0)}{1-\alpha_0}+4\kappa\leq \frac{1}{10}$.  In view of \eqref{z88a} and \eqref{z88b}, we can take $R\geq 1$ large, then $T$ small such that 
\begin{align}
&|||v|||_{3,T}\leq C |||v|||_{0,T}+ R^{\frac{1}{5}} |||v|||_{0,T}+ R^{-10}|||v|||_{3,T}(1+|||v|||_{3,T})+R^{-10},\\&
|||v|||_{0,T}	\leq  R^{\frac{1}{5}}.
\end{align}
These imply $|||v|||_{3,T}\leq R$.\\
2) Let $\theta_1,\theta_2\in S_{T,R}$ be such that $\theta_1(t=0)=\theta_2(t=0)=\theta_0$. Set $v_j=\mathcal{T}(\theta_j)$ and $\overline v=v_1-v_2, \overline \theta=\theta_1-\theta_2$. We can write 
\begin{equation}
\pa_t \overline v + \na^\bot \Lambda_D^{-1}\theta_1\cdot\na \overline v + \l \overline v=-\na^\bot \Lambda_D^{-1}\overline \theta\cdot\na v_2,
\end{equation}
with $ \overline v (t=0)=0$. \\
Using \eqref{z38b}, we have
\begin{equation}
||| v_1|||_{1,T}+||| v_2|||_{1,T}\lesssim C(R,||\theta_0||_{ C^{3\alpha_0}(\Omega)}) T^{\alpha_0}~~~\forall ~~T\in (0,1].
\end{equation}
Because  $\na^\bot \Lambda_D^{-1}\overline \theta\cdot\na v_2\vert_{\partial \Omega} =0$, so we apply \eqref{z37} in  Proposition \ref{eslinear+drift} with $f=-\na^\bot \Lambda_D^{-1}\overline \theta\cdot\na v_2$ to obtain that for any $T\in (0,1)$
\begin{align}\label{z42}
|||\overline v|||_{0,T}	\lesssim (1+	R)^4\sup_{s\in [0,T]}s^{1-\alpha_0}||\na^\bot \Lambda_D^{-1}\overline \theta\cdot\na v_2(s)||_{ C^{\alpha_0}}\lesssim (1+	R)^5T^{\alpha_0}	|||\overline \theta|||_{0,T}.	
\end{align}
On the other hand, as \eqref{z86} we have 
\begin{equation}
\pa_t \Delta\overline v + \na^\bot \Lambda_D^{-1}\theta_1\cdot\na \Delta\overline v + \l \overline \Delta v=f_1,
\end{equation}
where 
\begin{align}
f_1=	- \Delta\left(\na^\bot \Lambda_D^{-1}\overline \theta\cdot\na v_2\right)-\na^\bot \Lambda_D\theta_1\cdot\na \overline v -\sum_{j=0,1}2\na^\bot\partial_{x_j} \Lambda_D^{-1}\theta_1\cdot\na \partial_{x_j}\overline v,
\end{align}
and $f_1\vert_{\partial \Omega} =0$.\\
It is true that 
\begin{align}
\sup_{t\in [0,T]}t^{3-\alpha_0}||f_1(t)||_{ C^{\alpha_0}(\Omega)}\lesssim  C(R,||\theta_0||_{ C^{3\alpha_0}(\Omega)})T^{\alpha_0} \left(	|||\overline \theta|||_{3,T}+	|||\overline v|||_{3,T}\right).
\end{align}
{{Thus, we apply \eqref{z84} of Proposition \ref{eslinear+drift}  with $f=f_1$ and $\gamma=2$ to obtain for any $T\in (0,1)$
	\begin{align}\nonumber
	&	\sup_{t\in [0,T]}t^{3-\alpha_0}	|| \overline v (t)||_{ \dot C^{3+\alpha_0}(\Omega)}\sim \sup_{t\in [0,T]}t^{3-\alpha_0}	|| \Delta\overline v (t)||_{ \dot C^{1+\alpha_0}(\Omega)}
	\\&	\lesssim\nonumber (1+R)^4\left(\sup_{t\in [0,T]} t^2 ||\overline v (t)||_{ C^{2+2\alpha_0}(\Omega)}+\sup_{t\in [0,T]}t^{3-\alpha_0}||f_1(t)||_{ C^{\alpha_0}(\Omega)}\right)
	\\&\quad+(1+R)^{\frac{8}{\alpha_0}}T^{1-\alpha_0}\sup_{t\in [0,T]} t^2||\Delta \overline v (t)||_{L^\infty(\Omega)}\nonumber\\&
	\overset{\eqref{z90},\eqref{z90b}}\lesssim   C(R,||\theta_0||_{ C^{3\alpha_0}(\Omega)})\left(|||\overline v|||_{0,T}^{\frac{1-\alpha_0}{3-\alpha_0}} |||\overline v|||_{3,T}^{\frac{2}{3-\alpha_0}}+T^{\alpha_0} \left(	|||\overline \theta|||_{3,T}+	|||\overline v|||_{3,T}\right)\right)
	\\&\quad+ C(R,||\theta_0||_{ C^{3\alpha_0}(\Omega)})T^{1+\alpha_0}|||\overline v|||_{0,T}^{\frac{1+\alpha_0}{3-\alpha_0}} |||\overline v|||_{3,T}^{\frac{2(1-\alpha_0)}{3-\alpha_0}}.
\end{align}}}
Combining this with \eqref{z42}, we obtain  for any $T\in (0,1)$
\begin{align}
|||\overline v|||_{3,T} 
\lesssim C(R,||\theta_0||_{ C^{3\alpha_0}(\Omega)})T^{\alpha_0} \left(	|||\overline \theta|||_{3,T}+	|||\overline v|||_{3,T}\right).
\end{align} 
Thus, we deduce \eqref{z43} by taking $T>0$ small enough.
\end{proof}
\begin{coro}   Assume that $\theta\in S_{R,T}$ is a solution  of the system \eqref{sqg} and \eqref{u} for some $R,T>0$. Then, 
\begin{align}\label{z41}
|||\theta|||_{3,T}\lesssim |||\theta|||_{0,T}(1+T+|||\theta|||_{0,T})^{\frac{30}{\alpha_0(1-\alpha_0)}}.
\end{align}
\end{coro}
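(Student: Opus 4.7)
The plan is to recognize that when $\theta\in S_{T,R}$ solves \eqref{sqg}, \eqref{u} we have $\theta = \mathcal T(\theta)$, so the a priori inequality \eqref{z60} established in step 1 of the proof of Lemma \ref{keyle} applies with $v$ replaced by $\theta$. Setting $v=\theta$ in \eqref{z60} (the sums $|||\theta|||_{0,T}+|||v|||_{0,T}$ and $|||\theta|||_{3,T}+|||v|||_{3,T}$ collapse to $2|||\theta|||_{0,T}$ and $2|||\theta|||_{3,T}$, the constants being absorbed into $\lesssim$) yields
\begin{align*}
|||\theta|||_{3,T} &\lesssim |||\theta|||_{0,T} + \bigl(1 + ||\theta_0||_{L^\infty(\Omega)}^{1-\kappa} |||\theta|||_{0,T}^{\kappa}\bigr)^{\frac{4(3-\alpha_0)}{1-\alpha_0}} (1+T)^{\frac{2(3-\alpha_0)}{1-\alpha_0}} |||\theta|||_{0,T} \\
&\quad + (1+|||\theta|||_{0,T})^{4} (1+T)^{3} T^{2\alpha_0} |||\theta|||_{0,T}^{1+\alpha_0} |||\theta|||_{3,T}^{1-\alpha_0} \\
&\quad + (1+|||\theta|||_{0,T})^{\frac{8(3-\alpha_0)}{\alpha_0(1+\alpha_0)}} T^{3-\alpha_0} (1+T)^{\frac{3-\alpha_0}{1+\alpha_0}} |||\theta|||_{0,T}.
\end{align*}

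Next I eliminate $||\theta_0||_{L^\infty}$ using the fact that $\theta\in S_{T,R}$ enforces $\theta(\cdot,t)|_{\partial\Omega}=0$. For every $x\in\Omega$ and $t\in[0,T]$, the nearest boundary point $x_b$ yields $|\theta(x,t)| = |\theta(x,t)-\theta(x_b,t)| \le [\theta(\cdot,t)]_{\dot C^{2\alpha_0}(\Omega)}(\operatorname{diam}\Omega)^{2\alpha_0}$, which combined with the maximum principle \eqref{z59} gives $||\theta_0||_{L^\infty(\Omega)} \lesssim_{\Omega} |||\theta|||_{0,T}$. In particular, the factor $1+||\theta_0||_{L^\infty}^{1-\kappa}|||\theta|||_{0,T}^{\kappa}$ is $\lesssim 1+|||\theta|||_{0,T}$ independently of $\kappa$.

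Because $1-\alpha_0\in(0,1)$, I absorb the third term on the right-hand side by Young's inequality $XY^{1-\alpha_0} \le \tfrac12 Y + CX^{1/\alpha_0}$ with $Y=|||\theta|||_{3,T}$ and $X=(1+|||\theta|||_{0,T})^{4}(1+T)^{3}T^{2\alpha_0}|||\theta|||_{0,T}^{1+\alpha_0}$. The crucial identity $|||\theta|||_{0,T}^{(1+\alpha_0)/\alpha_0} = |||\theta|||_{0,T}\cdot |||\theta|||_{0,T}^{1/\alpha_0}$ extracts an explicit factor of $|||\theta|||_{0,T}$ from $X^{1/\alpha_0}$, matching the $|||\theta|||_{0,T}$ factor already carried by each of the other three terms. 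After pulling out a common $|||\theta|||_{0,T}$ and estimating the remaining product of $(1+|||\theta|||_{0,T})$- and $(1+T)$-factors by a single power of $(1+T+|||\theta|||_{0,T})$, the claim reduces to bounding four exponents
\[
e_1 = 0, \quad e_2 = \frac{6(3-\alpha_0)}{1-\alpha_0}, \quad e_3 = \frac{8}{\alpha_0}+2, \quad e_4 = \frac{8(3-\alpha_0)}{\alpha_0(1+\alpha_0)} + (3-\alpha_0) + \frac{3-\alpha_0}{1+\alpha_0}
\]
by $\frac{30}{\alpha_0(1-\alpha_0)}$.

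The main obstacle is this final bookkeeping, since the $1/\alpha_0$ factor created by the Young step and the $1/(1-\alpha_0)$ factor inherent in the second term both sit on the same side of the inequality. However, the slack is comfortable for $\alpha_0\in(0,1/10]$: one checks $e_2\le \frac{18}{1-\alpha_0}\le \frac{30}{\alpha_0(1-\alpha_0)}$, $e_3\le \frac{10}{\alpha_0}\le \frac{30}{\alpha_0(1-\alpha_0)}$, and using $\frac{8(3-\alpha_0)}{\alpha_0(1+\alpha_0)}\le \frac{24}{\alpha_0}$ together with $(3-\alpha_0)+\frac{3-\alpha_0}{1+\alpha_0}\le 6$ one obtains $e_4\le \frac{24}{\alpha_0}+6\le \frac{30}{\alpha_0(1-\alpha_0)}$. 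Taking the maximum of the four $e_i$ and noting the common $|||\theta|||_{0,T}$ factor completes the proof of \eqref{z41}.
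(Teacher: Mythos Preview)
Your proof is correct and follows essentially the same route as the paper: apply \eqref{z60} with $v=\theta$, then use Young's inequality to absorb the $|||\theta|||_{3,T}^{1-\alpha_0}$ term. The one minor difference is that the paper simply takes $\kappa=1$ in \eqref{z60}, which makes the factor $||\theta_0||_{L^\infty}^{1-\kappa}$ disappear outright, whereas you keep $\kappa$ generic and instead bound $||\theta_0||_{L^\infty}\lesssim_\Omega|||\theta|||_{0,T}$ via the boundary condition; both arrive at $(1+|||\theta|||_{0,T})$ in that slot, and your explicit exponent bookkeeping (which the paper omits) is sound.
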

\begin{proof} Using \eqref{z60} with $\kappa=1$, we have 
\begin{align}\nonumber
&|||\theta|||_{3,T}\lesssim |||\theta|||_{0,T}+\left(1+ |||\theta|||_{0,T}\right)^{\frac{4(3-\alpha_0)}{1-\alpha_0}}(T+1)^{\frac{2(3-\alpha_0)}{1-\alpha_0}}  |||\theta|||_{0,T}
\\& +\left(1+|||\theta|||_{0,T}\right)^4(T+1)^3T^{2\alpha_0}	|||\theta|||_{0,T}^{1+\alpha_0}|||\theta|||_{3,T}^{1-\alpha_0}+\left(1+|||\theta|||_{0,T}\right)^{\frac{8(3-\alpha_0)}{\alpha_0(1+\alpha_0)}}T^{3-\alpha_0}(1+T)^{\frac{3-\alpha_0}{1+\alpha_0}} |||\theta|||_{0,T}.
\end{align}
Thus, by Holder's inequality, we obtain \eqref{z41}. 
\end{proof}
 {{\begin{rem} \label{highgre1} As in the proof of \eqref{z60}, we use Lemma \ref{bouncon} and apply \eqref{z84} of Proposition \ref{eslinear+drift}  to 
	\begin{equation}
		\pa_t \Delta^k v + \na^\bot \Lambda_D^{-1}\theta\cdot\na \Delta^ k  v+ \l \Delta^k v =\na^\bot \Lambda_D^{-1}\theta\cdot\na \Delta^ k  v-\Delta^ k\left(\na^\bot \Lambda_D^{-1}\theta\cdot\na   v\right)
	\end{equation}
with $\gamma=2k, k\in \mathbb{N}$ to deduce that for any $n\in \mathbb{N},$
\begin{equation}
	\sup_{t\in [0,T]} ||v(t)||_{\dot C^{2\alpha_0}(\Omega)}+t^{2n-\alpha_0}	|| v(t)||_{\dot  C^{2n+1+\alpha_0}(\Omega)}\lesssim (1+T+ \sup_{t\in [0,T]}||\theta(t)||_{\dot C^{2\alpha_0}(\Omega)})^{c_n}
\end{equation}
for some $c_n$ and $\Delta^k \theta(t)\vert_{\partial\Omega}=0$
for any $t\in (0,T]$ and $k=0,...,n$.\end{rem}}}
\begin{thm}\la{calphaglob} Let $\theta_0\in {{ C^{3\alpha_0}(\Omega)}}$ and  
	$\theta_0=0$ on $\partial\Omega$ for some $\alpha_0\in (0,1/10)$.  Then,  the  equation \eqref{sqg} has a global unique solution $\theta$ satisfying 
\begin{equation}
||\theta(t)||_{C^{2\alpha_0}(\Omega)}+t^{3-\alpha_0}||\na_{t,x}^3 \theta(t)||_{L^\infty(\Omega)}\leq  C(||\theta_0||_{ C^{3\alpha_0}(\Omega)},\Omega) e^{-c t},
\end{equation}
for any $t\geq 0$ and for some $c>0.$
\end{thm}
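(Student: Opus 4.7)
The plan is to combine the local fixed-point construction of Lemma \ref{keyle} with the global a priori H\"older bound of Theorem \ref{holderR}, to propagate higher regularity via Remark \ref{highgre1}, and finally to extract exponential decay from the $L^2$ energy identity and the spectral gap of $\Lambda_D$.

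First, for $\theta_0\in C^{3\alpha_0}(\Omega)$ with $\theta_0|_{\partial\Omega}=0$, Lemma \ref{keyle} supplies a unique fixed point $\theta\in S_{T_0,R}$ of the map $\mathcal T$ on an interval $[0,T_0]$ whose length depends only on $\|\theta_0\|_{C^{3\alpha_0}(\Omega)}$ and $\Omega$; this is a classical solution of \eqref{sqg}, \eqref{u} with precisely the regularity demanded by Theorem \ref{holderR}. Uniqueness is the contraction estimate \eqref{z43}. To continue globally, pick $\alpha<\alpha_0/2$ with $\alpha(\|\theta_0\|_{L^\infty(\Omega)}+1)$ small and apply Theorem \ref{holderR} on every existence interval $[0,T]$ to obtain
\[
\sup_{t\in[0,T]}\|\theta(t)\|_{C^\alpha(\Omega)}+\int_0^T\|\theta(s)\|_{C^{1+\alpha/2}(\Omega)}\,ds\lesssim (\|\theta_0\|_{C^\alpha(\Omega)}+\mathrm{diam}(\Omega)+T+1)^{36/\alpha^2}.
\]
In particular $|||\theta|||_{0,T}$ stays finite on every finite interval, so Corollary \eqref{z41} and the iterated propagation in Remark \ref{highgre1} upgrade this to a bound on $\|\theta(t)\|_{C^{2\alpha_0}}+t^{3-\alpha_0}\|\nabla^3_{t,x}\theta(t)\|_{L^\infty}$ uniform on each compact time interval, with $\Delta^k\theta(t)=0$ on $\partial\Omega$ for every $k$ and $t>0$. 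A standard continuation argument based on repeatedly applying Lemma \ref{keyle} at restart times then produces the unique global smooth solution on $[0,\infty)$.

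For the exponential decay I would multiply \eqref{sqg} by $\theta$ and integrate by parts. Since $u=\nabla^\perp\Lambda_D^{-1}\theta$ is divergence-free and has vanishing normal component on $\partial\Omega$, the advective term drops, yielding
\[
\frac{1}{2}\frac{d}{dt}\|\theta\|_{L^2(\Omega)}^2+\|\Lambda_D^{1/2}\theta\|_{L^2(\Omega)}^2=0.
\]
The spectral lower bound $\|\Lambda_D^{1/2}f\|_{L^2}^2\ge \mu_1^{1/2}\|f\|_{L^2}^2$ gives $\|\theta(t)\|_{L^2(\Omega)}\le\|\theta_0\|_{L^2(\Omega)}e^{-\mu_1^{1/2}t}$. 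Once $\|\theta(t_*)\|_{L^2}$ has become small, the maximum-principle bound on $\|\theta\|_{L^\infty}$ combined with interpolation forces $\|\theta(t_*)\|_{C^{3\alpha_0}}$ into the smallness regime of Lemma \ref{keyle}; time-shifting the local construction to $t_*$ and iterating, the uniform-on-compacts smoothness obtained above can be merged with the $L^2$ decay by interpolation to deliver $\|\theta(t)\|_{C^{2\alpha_0}(\Omega)}+t^{3-\alpha_0}\|\nabla^3_{t,x}\theta(t)\|_{L^\infty(\Omega)}\lesssim e^{-ct}$ for all $t\ge 0$, which is the claim.

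The main obstacle is that the a priori bound \eqref{z81} is polynomial in $T$ and thus cannot on its own yield decay as $t\to\infty$; the higher-regularity propagation in Remark \ref{highgre1} likewise is only uniform on compacts. The two-scale scheme above is the key device: Theorem \ref{holderR} plus Remark \ref{highgre1} secure global existence with smoothness uniform on compacts, the energy identity and the spectral gap drive $\|\theta\|_{L^2}$ to zero exponentially, and the interpolation of exponential $L^2$ decay against uniform higher norms, restarted through Lemma \ref{keyle} at a late time $t_*\gg 1$, upgrades the decay to the weighted $C^3$ norm stated in the theorem.
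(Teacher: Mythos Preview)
Your overall architecture matches the paper's: local existence via Lemma \ref{keyle}, continuation via the a priori bound of Theorem \ref{holderR}, higher regularity from \eqref{z41}, $L^2$ decay from the energy identity and the spectral gap, and finally interpolation. However, the decay step contains a genuine error and an unnecessary detour.

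The claim that ``$\|\theta(t_*)\|_{L^2}$ small together with the maximum-principle bound on $\|\theta\|_{L^\infty}$ forces $\|\theta(t_*)\|_{C^{3\alpha_0}}$ small'' is false: interpolating between $L^2$ and $L^\infty$ yields only $L^p$ control, never a H\"older seminorm. A rapidly oscillating function with small amplitude has small $L^2$ and $L^\infty$ norms but large $C^{3\alpha_0}$ seminorm. Consequently the ``restart Lemma \ref{keyle} at a late time $t_*$'' mechanism has no foundation, and in any case restarting would only deliver local-in-time bounds on $[t_*,t_*+T_0]$, not an exponential rate.

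The paper proceeds more directly and this is the fix you should adopt. From the global a priori estimate \eqref{z81} and \eqref{z41} one obtains a \emph{polynomial} bound $\|\theta(t)\|_{C^{3+\alpha/2}(\Omega)}\lesssim (1+t)^{m_0}$ for $t\ge 1$. Now interpolate this polynomial growth in a norm \emph{above} $C^{2\alpha_0}$ against the exponential $L^2$ decay: for any $0<\beta<3+\alpha/2$,
\[
\|\theta(t)\|_{C^{\beta}}\lesssim \|\theta(t)\|_{L^2}^{1-\sigma}\|\theta(t)\|_{C^{3+\alpha/2}}^{\sigma}\lesssim e^{-(1-\sigma)\mu_1^{1/2}t}(1+t)^{\sigma m_0},
\]
which is exponentially decaying. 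Taking $\beta=3+\alpha/4$ already dominates $C^{2\alpha_0}$, and the time derivatives are then controlled from the equation itself since $\pa_t\theta=-u\cdot\na\theta-\Lambda_D\theta$. No restart is needed; you already hinted at this in your final paragraph, but it is the interpolation against the \emph{higher} norm, not $L^\infty$, that does the work.
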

\begin{proof}  Using Lemma \ref{keyle} and the Banach fixed point theorem,  the equation \eqref{sqg} has a local unique solution $\theta$ in $[0,T_1]$ satisfying 
\begin{equation}\label{z44}
|||\theta|||_{3,T_1}<\infty.
\end{equation}
for some $T_1>0$. Moreover, we also have $\theta\in C([0,T_1],C^{\alpha_0}(\Omega))$.\\
Define 
\begin{align}
T^*:=\sup \{T: \theta ~~\text{exists on} ~[0,T] ~\text{with} ~	|||\theta|||_{4,T}<\infty\}.
\end{align}
where 
\begin{align}
|||\theta|||_{4,T}=\sup_{t\in [0,T]}	||\theta(t)||_{C^{2\alpha_0}(\Omega)}+t^{3-\alpha_0}||\na_x^3 \theta(t)||_{L^\infty(\Omega)}.
\end{align}
In view of \eqref{z44}, we have $T^*>0$. \\
We prove $T^*=+\infty$.
Indeed, we assume $0<T^*<\infty$. So, we can apply  Theorem \ref{holderR} for $\theta$ in $[0,T]$ for any  $T<T*$ to get that 
\begin{align}
||\theta(t)||_{C^{\alpha}(\Omega)}^{\frac{1}{\alpha}-\frac{\alpha}{2-\alpha}}\leq  C(||\theta_0||_{ C^{3\alpha_0}(\Omega)},\Omega)(t+1)^{\frac{36}{\alpha^2}},
\end{align}
for any $t\leq T^*$ and 
for some $\alpha\in (0,\alpha_0)$ small enough.\\ Using \eqref{z41}, we get
\begin{align}\label{z92}
||\theta(t)||_{C^{\alpha}(\Omega)}+t^{3-\alpha/2}	|| \theta(t)||_{\dot  C^{3+\alpha/2}(\Omega)}\leq   C(||\theta_0||_{ C^{3\alpha_0}(\Omega)},\Omega)(t+1)^{m_0},
\end{align}
for any $t\in (0,T^*]$ and for some $m_0>0$. From this and \eqref{z44} we get a contradiction.\\
Therefore, for any $t\geq 1$
\begin{equation}\label{z91}
|| \theta(t)||_{\dot  C^{3+\alpha/2}(\Omega)}\leq   C(||\theta_0||_{ C^{3\alpha_0}(\Omega)},\Omega)(t+1)^{m_0}.
\end{equation}
On the other hand, we have 
\begin{equation}
\partial_t( ||\theta(t)||_{L^2(\Omega)}^2)+2 ||\Lambda_D^{\frac{1}{2}}\theta(t)||_{L^2(\Omega)}^2=0.
\end{equation}
{{Using \eqref{norms},  
\begin{equation}
\partial_t( ||\theta(t)||_{L^2(\Omega)}^2)+2\mu_1 ||\theta(t)||_{L^2(\Omega)}^2\leq  0.
\end{equation}
This implies 
\begin{equation}
||\theta(t)||_{L^2(\Omega)}\leq e^{-2\mu_1t}	||\theta_0||_{L^2(\Omega)}.
\end{equation}
Using  interpolation, we obtain from  this and  \eqref{z91}  that for any $t\geq 1$}}
\begin{equation}
|| \theta(t)||_{  C^{3+\alpha/4}(\Omega)}\leq   C(||\theta_0||_{ C^{3\alpha_0}(\Omega)},\Omega)e^{-ct},
\end{equation}
for some $c>0$.
Since 
$	\pa_t v =- \na^\bot \Lambda_D^{-1}\theta\cdot\na v -\l v $, 
\begin{align}
||\pa_{t,x}^3 v(t)||_{L^\infty(\Omega)} \lesssim	|| \theta(t)||_{  C^{3+\alpha/4}(\Omega)}\left(	|| \theta(t)||_{ L^\infty(\Omega)}+1\right)^{10}.
\end{align}
Thus, we obtain the result. 
\end{proof}
 {{\begin{rem}\label{highgre2} In view of Remark \eqref{highgre1}, we obtain that  the  solution $\theta$ satisfies $\Delta^n\theta(t)\vert_{\partial \Omega}=0$ for $t>0$ and 
	\begin{equation}
	\sup_{t\geq 0}	||\theta(t)||_{C^{2\alpha_0}(\Omega)}+t^{2n+1-\alpha_0}||\na_{t,x}^{2n+1} \theta(t)||_{L^\infty(\Omega)}\leq  C(||\theta_0||_{ C^{3\alpha_0}(\Omega)},\Omega,n) e^{-c_n t},
	\end{equation}
	for any $n\in \mathbb{N}$ and for some $c_n>0$.
\end{rem}}}

\section {Appendix 1}\la{change}
This is a construction in $d=2$. A similar construction can be done in any dimension.
We take without loss of generality $\varphi(x_1)$ to have $\varphi' = 0$ for $|x_1|\ge \ell$ and $|\varphi''|\le C_0$. We assumed $\varphi(0) = \varphi'(0) = 0$ and $|\varphi'(x_1)| \le \epsilon$ for all $x_1$. Clearly, $\varphi (x_1) = h_1 = \varphi(\ell)$ for $x_1\ge \ell$ and $\varphi(x_1) = \varphi (-\ell) = h_2$ 
for $x_1\le -\ell$. 
We take
\be
Y_2(x) = x_2-\varphi(x_1).
\la{y2}
\ee
Now 
\be
\na Y_2(x) = e_2- \vp'(x_1) e_1
\la{nay2}
\ee
is a globally defined vector field
\be
N= e_2 -\vp'(x_1)e_1.
\la{N}
\ee
and if we build a function  $Z(x)$ so that $N\cdot \na Z=0$. We set
\be
\na Z  = \gamma(x)( e_1 + \vp'(x_1)e_2).
\la{nazalpha}
\ee
This can be done if, and only if,
\be
\pa_2\gamma = \pa_1(\vp' \gamma),
\la{eqalpha}
\ee
which is a first order equation
\be
N\cdot\na\gamma = \vp''\gamma.
\la{nalpheq}
\ee
We solve this on characteristics and show that the solution is global. It is good to  set data on the curve 
\be
\Gamma = \{x\left | \right. x_2 = \vp(x_1)\}.
\la{Gmma}
\ee
The characteristics are
\be
\fr{d\xi}{ds} = -\vp'(\xi)
\la{xichar}
\ee
with $\xi(0) = x_1$ 
and
\be
\fr{d\eta}{ds} = 1
\la{etas}
\ee
with initial data $\eta(0) = \vp(x_1)$.  Clearly
\be
\eta(s) = \vp(x_1) + s.
\la{etasexplicit}
\ee
On characteristics, $\gamma$ solves
\be
\fr{d}{ds} \gamma (\xi(s), \eta(s)) = \vp''(\xi(s))\gamma(\xi(s), \eta(s)).
\la{alphaxieta}
\ee
We set the initial data for $\gamma $ on the curve $\Gamma$,
\be
\gamma (\xi(0), \eta(0)) = \gamma(x_1, \vp(x_1)) = 1.
\la{alphainit}
\ee
It is clear from our assumptions that the characteristics exist for all $s$. If $|x_1|\ge \ell$ the characteristics are vertical lines
$\xi(s) = x_1$ . Also, if $x_1=0$ the characteristic is the $x_2$ axis. Moreover $\gamma(x_1,x_2) =  1$  for $|x_1|\ge \ell$.
We note that $\gamma(0, s) = e^{\vp''(0)s}$.  It is instructive to look at the case $\vp = \fr{x_1^2}{2}$ for which $\xi = x_1e^{-s}$
and $\gamma = e^s$. In this case we can determine $\gamma(x_1,x_2)$ implicitly from the relation 
$x_2 = \gamma^2 \fr{x_1^2}{2} + \log \gamma$.\\
We define $Y_1(x)$ by setting
\be
Y_1(x)  = x_1 + \chi(x_2)\int_0^{x_1}(\gamma(z, x_2)-1)dz
\la{Y1}
\ee
where $\chi(x_2) = 1$ for $|x_2|\le H$ and $\chi(x_2) = 0$ for $|x_2| \ge 2H$ with $H> \epsilon\ell $. We have
\be
\pa_1 Y_1 = \chi(x_2) \gamma(x_1,x_2) + (1-\chi(x_2))
\la{pa1y1}
\ee
and, using \eqref{eqalpha}, 
\be
\pa_2 Y_1 = \chi(x_2) \gamma(x_1,x_2) \vp'(x_1)  + \fr{d\chi}{dx_2}\int_0^{x_1}(\gamma(z, x_2)-1)dz.
\la{pa2y2}
\ee
Therefore
\be
\na Y_1 \cdot \na Y_2 = -\vp' (1-\chi(x_2))  +  \fr{d\chi}{dx_2}\int_0^{x_1}(\gamma(z, x_2)-1)dz
\la{cross}
\ee
vanishes in a neighborhood of $\Gamma$. We can arrange the cutoff $\chi$ so that $x\mapsto (Y_1,Y_2)$ is a global diffeomorphism. This is done by noting from \eqref{pa1y1} that  $\pa_1Y_1\ge \fr{1}{2}$ may be arranged  by ensuring $\gamma \ge \fr{1}{2}$ on the support of $\chi$. This follows if $H$ is small enough, which is possible if  $\epsilon\ell$ is small. Then the Jacobian $\det\na Y$ is bounded away from zero, provided $\epsilon$ and $\ell$ are small enough and  $|\chi'|\le CH^{-1}$. This shows that $x\mapsto Y$ is locally injective. To show global invertibility we show that $Y$ can be continuously deformed to the identity, by taking $\vp$ to zero.
We note also 
\be
\ba
Y_1(x) = x_1, \quad \text{for} \; |x_2|\ge 2H,\\
Y_2(x) = x_2. \quad \text{for} \; |x_1|\ge \ell.
\ea
\la{yid}
\ee
Once we have established the existence of the smooth diffeomorphism $X = Y^{-1}$, we have 
the intertwining \eqref{interdeltaL}.

\section {Appendix 2: Estimates of heat kernels}\la{kernelestimes}
In this section, we establish estimates for the heat kernel of the operator
\begin{equation}
L=-\operatorname{div}_x(A(x)\na_x)
\end{equation}
in $\mathbb R^d$, {{ for $d\geq 2$}}
where $A$ is a symmetric matrix-valued function in $\mathbb{R}^d$ satisfying
\begin{align}\label{z65aa}
&A(x)\geq  c_1 I\quad\quad \forall x\in \mathbb{R}^d,\\&
||\na A||_{L^\infty}+||A||_{L^\infty}\leq c_2,\label{z65bb}
\end{align}
with {{ constants}} $c_1,c_2>0$.\\
Let be $H_L(x,y,t)$  the kernel of $\partial_t+L$ in $\mathbb{R}^d\times (0,\infty)$ i.e 
\begin{align}\nonumber
&	\partial_t H_L(x,x+z,t)-\operatorname{div}_z(A(x+z)\nabla_z H_L(x,x+z,t))=0,\\&\lim_{t\to 0}H(x,x+z,t)=\delta_{z=0},\label{z63}
\end{align}
for any $x,z\in \mathbb{R}^d$.\\
It is well-known that the kernel $H_L(x,y,t)$ satisfies 
\begin{equation}\label{z17a}
\frac{1}{c_4t^{\frac{d}{2}}}\exp(-c_4\frac{|z|^2}{t})\leq 		H_L(x,x+z,t)\leq  \frac{1}{c_3t^{\frac{d}{2}}}\exp(-c_3\frac{|z|^2}{t})
\end{equation}
for some $c_3,c_4>0$, see \cite[Theorem 3.3.4]{davies2}. Moreover, we also have 
\begin{equation}\label{z17b}
|\na_xH_L(t,x,y)|+	|\na_yH_L(t,x,y)|\leq  \frac{C}{t^{\frac{d}{2}}\min\{t,1\}^{\frac{1}{2}}}\exp(-\frac{c_3}{4}\frac{|x-y|^2}{t}),
\end{equation}
see \cite[chapter IV, section 13, (13.1)]{ladysolo} for case $t\in (0,1]$; when $t\geq 1$, it follows by using semigroup property:  $H_L(t,x,y)=\int_{\mathbb{R}^d}H_L(\frac{1}{2},x,z)H_L(t-\frac{1}{2},z,y)dz$.\\
In particular, \eqref{z17b} implies 
\begin{equation}\label{z7}
||e^{-tL}\operatorname{div}(g)||_{L^\infty(\mathbb{R}^d)}\leq C \min\{t,1\}^{-\frac{1-a}{2}} ||g||_{\dot C^{a}(\mathbb{R}^d)}
\end{equation}
for any $a\in (0,1)$.\\
For fixed $y\in \mathbb{R}^d$, we {{define}} 
\begin{equation}
	L_{y}=-\operatorname{div}_x(A(y)\na _x).
\end{equation}
Its heat kernel is given in \eqref{z47},
\begin{equation}\label{z47z}
G_{A(y)}(z,t)=\frac{1}{\sqrt{\det A(y)}(4\pi t)^{\frac{d}{2}}}\exp(-\frac{(A(y)^{-1}z\cdot z)}{4t}),
\end{equation}
and the square root of the operator $L_y$ is given by
\begin{align}
L_{y}^{\frac{1}{2}}u(x)=	\frac{	\tilde{c}_0}{\sqrt{\det A(y)}}\int_{\mathbb{R}^d}\frac{u(x)-u(x+z)}{(A(y)^{-1}z\cdot z)^{\frac{d+1}{2}}}dz.
\end{align}
The Lipschitz continuity properties 
\begin{align}\label{z53}
&\sup_{x\in \mathbb{R}^d}	|L_{y_1}^{\frac{1}{2}}u(x)-	L_{y_2}^{\frac{1}{2}}u(x)|\lesssim\min\{|y_1-y_2|,1\} ||u||_{\dot C^{a}}^{a}||u||_{\dot C^{1+a}}^{1-a},\\&
{{ |(L_{y_1}^{\frac{1}{2}}u(x)-	L_{y_2}^{\frac{1}{2}}u(x))-(L_{y_1}^{\frac{1}{2}}u(y)-	L_{y_2}^{\frac{1}{2}}u(y))|\lesssim\min\{|y_1-y_2|,1\} |x-y|^a||u||_{\dot C^{1+a}}}}.\label{z53b}
\end{align}
hold for any $a\in (0,1)$.{{  Indeed, we write 
\begin{align}\nonumber
&	(L_{y_1}^{\frac{1}{2}}u(x)-	L_{y_2}^{\frac{1}{2}}u(x))-(L_{y_1}^{\frac{1}{2}}u(y)-	L_{y_2}^{\frac{1}{2}}u(y))\\&\quad\quad\quad\quad\quad\quad\quad\quad= \int_{\mathbb{R}^d} \digamma(y_1,y_2,z)(\delta_{z}u(y)-\delta_{z}u(x))dz,
\end{align}
with 
\begin{equation}
\digamma(y_1,y_2,z)=	\frac{	\tilde{c}_0}{\sqrt{\det A(y_1)}}\frac{1}{(A(y_1)^{-1}z\cdot z)^{\frac{d+1}{2}}}-	\frac{	\tilde{c}_0}{\sqrt{\det A(y_2)}}\frac{1}{(A(y_2)^{-1}z\cdot z)^{\frac{d+1}{2}}}.
\end{equation}
Then, \eqref{z53b} follows by using the following inequalities
\begin{align}
&	|\delta_{z}u(y)-\delta_{z}u(x)|\leq |x-y|^{a+\epsilon_0}|z|^{1-\epsilon_0} ||u||_{\dot C^{1+a}}~~\text{for}~~|z|\geq |x-y|,\\&
	|(\delta_{z}u(y)-z\na u(y))-(\delta_{z}u(x)-z\na u(x))|\leq |x-y|^{a-\epsilon_0}|z|^{1+\epsilon_0} ||u||_{\dot C^{1+a}}~~~~\text{for}~~|z|\leq |x-y|,\\&
|	\digamma(y_1,y_2,z)|\lesssim\frac{ \min\{1,|y_1-y_2|\}}{|z|^{d+1}},
\end{align}
for $\epsilon_0\in (0,\frac{a}{2})$.}}
\vspace{0.2cm}\\

{{In order to study $\na L^{-\frac{1}{2}}$ and $L^{\frac{1}{2}}$, we make use of the following fine properties of the kernel $H_L(x,y,t)$. }}
\begin{lemma} \label{heatke} The following inequalities hold
\begin{align}\label{z17c1}
&\left|H_L(x,x+z,t)-G_{A(x+z)}(z,t)\right|+\left|H_L(x+z,x,t)-G_{A(x+z)}(z,t)\right|\lesssim \frac{\exp(-c_0\frac{|z|^2}{t})}{t^{\frac{d-1}{2}}(1+t)^{\frac{1}{2}}},\\&\nonumber	\left|\nabla_zH_L(x,x+z,t)-\nabla_zG_{A(x+z_0)}(z,t)\vert_{z_0=z}\right|\\&\quad\quad\quad\quad+\left|\nabla_zH_L(x+z,x,t)-\nabla_zG_{A(x+z_0)}(z,t)\vert_{z_0=z}\right|\lesssim \frac{\log (2+t)}{t^{\frac{d}{2}}}\exp(-c_0\frac{|z|^2}{t}),\label{z17c2}\\&\nonumber
\left|\delta_h^z\nabla_zH_L(x,x+z,t)-\delta_h^z\nabla_zG_{A(x+z_0)}(z,t)\vert_{z_0=z}\right|\\&\quad\quad\quad\quad+\left|\delta_h^z\nabla_zH_L(x+z,x,t)-\delta_h^z\nabla_zG_{A(x+z_0)}(z,t)\vert_{z_0=z}\right|\lesssim|h|\log(2+\frac{\sqrt{t}}{|h|})\frac{\exp(-c_0\frac{|z|^2}{t})}{t^{\frac{d}{2}}\min\{t,1\}^{\frac{1}{2}}}\label{z17c3}
\end{align}
for any $t>0$ and $|h|\leq \sqrt{\min\{t,1\}}/10$. Here $\delta_h f(x):=f(x+h)-f(x)$.\\
\end{lemma}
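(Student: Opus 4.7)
The plan is to represent the difference
\begin{equation*}
D(x,z,t):=H_L(x,x+z,t)-G_{A(x+z)}(z,t)
\end{equation*}
via a freezing-coefficients Duhamel formula and estimate it using the Gaussian bounds \eqref{z17a}, \eqref{z17b} together with standard Gaussian convolution. Since $A$ is symmetric, $L$ is self-adjoint and hence $H_L(x,y,t)=H_L(y,x,t)$; this identity equates the two summands in each line of the lemma, so it suffices to bound $D$, $\nabla_z D$, and $\delta_h^z\nabla_z D$.

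Fix $x$, set $\mathcal M u(z):=\operatorname{div}_z(A(x+z)\nabla_z u)$, and let $\Phi(x,z,t):=G_{A(y_0)}(z,t)\vert_{y_0=x+z}$. Then $H_L^{(x)}(z,t):=H_L(x,x+z,t)$ solves $(\partial_t-\mathcal M)H_L^{(x)}=0$ with $\delta_0$ initial data, while a direct chain-rule calculation shows that $\Phi$ shares the same initial condition but satisfies $(\partial_t-\mathcal M)\Phi=E$ with
\begin{equation*}
E=-(\partial_{w_i}a_{ij})(x+z)\bigl[(\partial_{z_j}G)+(\partial_{y_{0,j}}G)\bigr]-a_{ij}(x+z)\bigl[2(\partial_{z_i}\partial_{y_{0,j}}G)+(\partial_{y_{0,i}}\partial_{y_{0,j}}G)\bigr],
\end{equation*}
all $G$'s evaluated at $(y_0,z,t)=(x+z,z,t)$. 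From \eqref{z65aa}, \eqref{z65bb} and the explicit Gaussian form of $G_{A(y_0)}(z,t)$, every $y_0$-derivative of $G$ is bounded by $\lesssim t^{-d/2}e^{-c|z|^2/t}$ (polynomial factors $|z|^2/t$ absorb into a slightly smaller Gaussian constant), while each $z$-derivative costs an extra $t^{-1/2}$, giving $|E(x,z,t)|\lesssim (t^{-d/2}+t^{-(d+1)/2})e^{-c|z|^2/t}$. Since the fundamental solution of $\partial_t-\mathcal M$ in $z$ is $p(z,z',\tau)=H_L(x+z',x+z,\tau)$, Duhamel's principle yields
\begin{equation*}
D(x,z,t)=-\int_0^t\!\!\int_{\mathbb R^d}H_L(x+z',x+z,t-s)\,E(x,z',s)\,dz'\,ds.
\end{equation*}

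The three estimates now reduce to Gaussian bookkeeping. For \eqref{z17c1}, the convolution identity $\int_{\mathbb R^d}e^{-c|z-z'|^2/(t-s)}e^{-c|z'|^2/s}\,dz'\lesssim (s(t-s)/t)^{d/2}e^{-c'|z|^2/t}$ collapses the inner spatial integral to $\lesssim t^{-d/2}(1+s^{-1/2})e^{-c'|z|^2/t}$, and the time integral of $(1+s^{-1/2})$ on $[0,\min\{t,1\}]$ is $O(\sqrt{\min\{t,1\}})$; for $t\ge 1$ the triangle inequality $|D|\le|H_L|+|\Phi|\lesssim t^{-d/2}e^{-c|z|^2/t}$ supplies the complementary large-time bound, and together they yield $t^{-(d-1)/2}(1+t)^{-1/2}e^{-c_0|z|^2/t}$. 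For \eqref{z17c2} I use the algebraic decomposition
\begin{equation*}
\nabla_zH_L(x,x+z,t)-(\nabla_zG_{A(x+z_0)}(z,t))\vert_{z_0=z}=\nabla_zD(x,z,t)+(\partial_{y_0}G)(x+z,z,t),
\end{equation*}
bound the second term directly by $\lesssim t^{-d/2}e^{-c|z|^2/t}$, and estimate $\nabla_zD$ by differentiating the Duhamel formula in $z$ and invoking the gradient bound \eqref{z17b}; for $t\le 1$ Duhamel yields $\lesssim t^{-d/2}$ because $\int_0^t(t-s)^{-1/2}(1+s^{-1/2})ds=O(1)$, and for $t\ge 1$ the triangle inequality $|\nabla_zH_L|+|(\partial_zG)(x+z,\cdot)|\lesssim t^{-d/2}$ applies, all cases being dominated by $\log(2+t)t^{-d/2}e^{-c_0|z|^2/t}$. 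Finally \eqref{z17c3} is proved by applying $\delta_h^z$ to the same decomposition: the finite difference of $(\partial_{y_0}G)$ is controlled by its pointwise Lipschitz bound in $z$, and $\delta_h^z\nabla_zD$ is handled by splitting the spatial integral in the Duhamel formula according to whether $|z-z'|$ is smaller or larger than $|h|$, together with the H\"older-log modulus $|h|\log(2+\sqrt{t-s}/|h|)$ of $\nabla_zH_L$ in $z$ (characteristic of second-order elliptic operators with merely Lipschitz coefficients); the constraint $|h|\le\sqrt{\min\{t,1\}}/10$ ensures the spatial splitting is nontrivial and produces the stated $|h|\log(2+\sqrt{t}/|h|)$ prefactor.

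The main technical obstacle is organizing the Gaussian convolutions so that the Gaussian decay constant $c_0$ and the precise time and finite-difference prefactors come out correctly: \eqref{z17a}, \eqref{z17b} and the explicit $G_{A(y_0)}$ carry different bare Gaussian constants that must be reconciled by choosing $c_0$ smaller than all of them, and the borderline time integrations across the $t\le 1$ and $t\ge 1$ regimes, as well as the spatial splittings needed for \eqref{z17c3}, must be arranged to avoid spurious polynomial factors.
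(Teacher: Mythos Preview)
Your Duhamel representation is the ``dual'' of the paper's: you treat $H_L$ as the propagator and $\Phi=G_{A(x+z)}$ as an approximate solution with defect $E$, whereas the paper freezes the coefficient at a parameter $z_0$, writes
\[
H_L(x,x+z,t)-G_{A(x+z_0)}(z,t)=\int_0^t\!\!\int \nabla_z G_{A(x+z_0)}(z-z',t-s)\bigl(A(x+z')-A(x+z_0)\bigr)\nabla_{z'}H_L(x,x+z',s)\,dz'\,ds,
\]
and only afterwards sets $z_0=z$. For \eqref{z17c1} and \eqref{z17c2} both routes work: your forcing $E$ has explicit Gaussian bounds, the propagator $H_L$ obeys \eqref{z17a}--\eqref{z17b}, and your separate triangle-inequality argument for $t\ge 1$ is a legitimate substitute for the uniform-in-$t$ Duhamel estimate the paper uses (which is where the $\log(2+t)$ in \eqref{z17c2} actually arises).

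The gap is in \eqref{z17c3}. To estimate $\delta_h^z\nabla_z D$ you must difference the propagator $\nabla_2 H_L(x+z',x+z,t-s)$ in its second argument, and you invoke a ``H\"older-log modulus $|h|\log(2+\sqrt{t-s}/|h|)$ of $\nabla_z H_L$''. But no such modulus is among the assumed bounds \eqref{z17a}--\eqref{z17b}; in this paper it is obtained only \emph{a posteriori} as a consequence of \eqref{z17c3} itself (see \eqref{z20b}, \eqref{z67b}, \eqref{z67c}). So as written the argument is circular. The spatial splitting $|z-z'|\lessgtr|h|$ you mention does not by itself produce the needed gain, because the $z'$-integration already saturates the Gaussian and leaves you with a bare $\delta_h$ of $\nabla H_L$. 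The paper avoids this precisely by placing the \emph{explicit} Gaussian $G$ in the propagator slot: one differentiates \eqref{z64a} in $z$ with $z_0$ held fixed, sets $z_0=z$, and then applies $\delta_h^z$; the resulting integrand contains $\nabla_z^2 G_{A(x+z_0)}$ whose third derivative is explicit, so the mean-value estimate on $[0,t-|h|^2]$ and the crude bound on $[t-|h|^2,t]$ go through using only \eqref{z17b} on the factor $\nabla_{z'}H_L$. The strategic point is: put the smooth explicit object where high regularity is needed (the propagator) and the rough object $H_L$ where a pointwise bound suffices (the forcing). Your choice reverses this, which is harmless for \eqref{z17c1}--\eqref{z17c2} but fails at the finite-difference step.
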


\begin{proof} {{Our proof is based on a method of freezing coefficients. It is similar to  \cite[chapter IV, section 13]{ladysolo} and it is probably not new.}}\vspace{0.2cm}\\
	1) Using \eqref{z63}, we have for any $z_0\in \mathbb{R}^d$, 
\begin{align}
\partial_t H_L(x,x+z,t)-\operatorname{div}_z(A(x+z_0)\nabla_z H_L(x,x+z,t))=\operatorname{div}_{z}(\left(A(x+z)-A(x+z_0)\right)\nabla_{z} H_L(x,x+z,t))
\end{align}
so, 	integrating by parts, we obtain
\begin{align}\nonumber
&	H_L(x,x+z,t)-G_{A(x+z_0)}(z,t)\\&=
\int_{0}^{t}\int_{\mathbb{R}^d}\nabla_zG_{A(x+z_0)}(z-z',t-s)\left(A(x+z')-A(x+z_0)\right)\nabla_{z'} H_L(x,x+z',s) dz'ds.\label{z64a}
\end{align}
Thus, for any $t\in (0,1]$
\begin{align}\nonumber
\left|H_L(x,x+z,t)-G_{A(x+z)}(z,t)\right|&\overset{\eqref{z17b}}\lesssim
\int_{0}^{t}\int_{\mathbb{R}^d}\frac{\exp(-c_5\frac{|z-z'|^2}{t-s})}{ (t-s)^{\frac{d+1}{2}}}\min\{|z-z'|,1\}\frac{\exp(-c_5\frac{|z'|^2}{s})}{s^{\frac{d}{2}}\min\{s,1\}^{\frac{1}{2}}}dz'ds\\&\lesssim\nonumber \exp(-c_5\frac{|z|^2}{8t})
\int_{0}^{t}\int_{\mathbb{R}^d}\frac{\exp(-c_5\frac{|z-z'|^2}{4(t-s)})}{ (t-s)^{\frac{d}{2}}}\frac{\exp(-c_5\frac{|z'|^2}{4s})}{s^{\frac{d+1}{2}}}dz'ds
\\&\lesssim \nonumber\exp(-c_5\frac{|z|^2}{8t})
\int_{0}^{t}\frac{\min\{t-s,s\}^{\frac{d}{2}}}{ (t-s)^{\frac{d}{2}}}\frac{1}{s^{\frac{d+1}{2}}}ds\\&\lesssim \frac{1}{t^{\frac{d-1}{2}}}\exp(-c_5\frac{|z|^2}{8t}) ,\label{z66a}
\end{align}
holds with $c_5>0$.
Here we have used the fact that 
\begin{align}
&\frac{	\min\{|z-z'|,1\}}{(t-s)^{\frac{1}{2}}}\exp(-c_5\frac{|z-z'|^2}{2(t-s)}){{\lesssim 1}},\\&
\frac{|z-z'|^2}{t-s}+	\frac{|z'|^2}{s}\geq \frac{|z|^2}{2t}.
\end{align}
Therefore, 
\begin{equation}
\left|H_L(x,x+z,t)-G_{A(x+z)}(z,t)\right|\lesssim \frac{1}{t^{\frac{d-1}{2}}(1+t)^{\frac{1}{2}}}\exp(-c_0\frac{|z|^2}{t})
\end{equation}
holds for some $c_0>0$.\\
2) We apply $\na_z$ to both sides of \eqref{z64a}, then we take $z_0=z$ to deduce that 
\begin{align}\nonumber
&	\na_zH_L(x,x+z,t)-	\na_zG_{A(x+z_0)}(z,t)\vert_{z_0=z}\\&=
\int_{0}^{t}\int_{\mathbb{R}^d}\nabla_z^2G_{A(x+z_0)}(z-z',t-s)\vert_{z_0=z}\left(A(x+z')-A(x+z)\right)\nabla_{z'} H_L(x,x+z',s) dz'ds.\label{z64b}
\end{align}
So, as in \eqref{z66a}
\begin{align}\nonumber
\left|\nabla_zH_L(x,x+z,t)-\nabla_zG_{A(x+z_0)}(z,t)\vert_{z_0=z}\right|&\overset{\eqref{z17b}}\lesssim \exp(-c_5\frac{|z|^2}{8t})
\int_{0}^{t}\frac{\min\{t-s,s\}^{\frac{d}{2}}}{(1+t-s)^{\frac{1}{2}} (t-s)^{\frac{d+1}{2}}}\frac{1}{s^{\frac{d}{2}}\min\{s,1\}^{\frac{1}{2}}}ds\\&\lesssim \frac{\log (2+t)}{t^{\frac{d}{2}}}\exp(-c_5\frac{|z|^2}{8t}).\end{align}
3)  We apply $\delta_h^z$ to both sides of \eqref{z64b} with $|h|\leq \sqrt{\min\{t,1\}}/10$
\begin{align}\nonumber
&	\left|\delta_h^z\nabla_zH_L(x,x+z,t)-\left(\nabla_zG_{A(x+z_0)}(z+h,t)\vert_{z_0=z+h}-\nabla_zG_{A(x+z_0)}(z,t)\vert_{z_0=z}\right)\right|\\&\overset{\eqref{z17b}}\lesssim  \nonumber	\int_{0}^{t}\int_{\mathbb{R}^d}\left|\nabla_z^2G_{A(x+z_0)}(z+h-z',t-s)\vert_{z_0=z+h}\left(A(x+z')-A(x+z+h)\right)\right.\\&\quad\quad\quad\quad\left.-\nabla_z^2G_{A(x+z_0)}(z-z',t-s)\vert_{z_0=z}\left(A(x+z')-A(x+z)\right)\right| \frac{\exp(-c_5\frac{|z'|^2}{s})}{s^{\frac{d}{2}}\min\{s,1\}^{\frac{1}{2}}} dz'ds\nonumber\\& := \int_{0}^{t} M(h,s) ds.\label{z65}
\end{align}
When $s\in [t-|h|^2,t]$, as \eqref{z66a} we estimate 
\begin{align}
M(h,s) &\lesssim\nonumber \exp(-c_5\frac{|z|^2}{8t})\sum_{i=0,1}
\int_{\mathbb{R}^d}\frac{\exp(-c_5\frac{|z-z'+ih|^2}{4(t-s)})}{ (t-s)^{\frac{d+1}{2}}}\frac{\exp(-c_5\frac{|z'|^2}{4s})}{s^{\frac{d}{2}}\min\{s,1\}^{\frac{1}{2}}}dz'\\&\lesssim \frac{1}{ (t-s)^{\frac{1}{2}}}\frac{\exp(-c_5\frac{|z|^2}{8t})}{t^{\frac{d}{2}}\min\{t,1\}^{\frac{1}{2}}}.\label{z66}
\end{align}
When $s\in [0, t-|h|^2]$, we have 
\begin{align}\nonumber
M(h,s) &\lesssim\int_{\mathbb{R}^d}\left|\nabla_z^2G_{A(x+z_0)}(z+h-z',t-s)\vert_{z_0=z+h}-\nabla_z^2G_{A(x+z_0)}(z-z',t-s)\vert_{z_0=z+h}\right|\\&\nonumber\quad\quad\quad\times\min\{|z-z'|,1\}\frac{\exp(-c_5\frac{|z'|^2}{s})}{s^{\frac{d}{2}}\min\{s,1\}^{\frac{1}{2}}}  dz'\\&\nonumber+\int_{\mathbb{R}^d}\left|\nabla_z^2G_{A(x+z_0)}(z-z',t-s)\vert_{z_0=z+h}-\nabla_z^2G_{A(x+z_0)}(z-z',t-s)\vert_{z_0=z}\right|\\&\nonumber\quad\quad\quad\times\min\{|z-z'|,1\}\frac{\exp(-c_5\frac{|z'|^2}{s})}{s^{\frac{d}{2}}\min\{s,1\}^{\frac{1}{2}}}  dz'\\&+
\int_{\mathbb{R}^d}\left|\nabla_z^2G_{A(x+z_0)}(z+h-z',t-s)\vert_{z_0=z+h}\right||h|\frac{\exp(-c_5\frac{|z'|^2}{s})}{s^{\frac{d}{2}}\min\{s,1\}^{\frac{1}{2}}}  dz'.
\end{align}
Since $|h|^2\leq\min\{ t-s,1\}$, we get 
\begin{align}\nonumber
M(h,s) &\lesssim	|h|\int_{\mathbb{R}^d}\frac{\exp(-c_5\frac{|z-z'|^2}{t-s})}{ (t-s)^{\frac{d+2}{2}}}\frac{\exp(-c_5\frac{|z'|^2}{s})}{s^{\frac{d}{2}}\min\{s,1\}^{\frac{1}{2}}}  dz'\\&\lesssim |h|\exp(-c_5\frac{|z|^2}{8t}) \frac{\min\{t-s,s\}^{\frac{d}{2}}}{ (t-s)^{\frac{d+2}{2}}}\frac{1}{s^{\frac{d}{2}}\min\{s,1\}^{\frac{1}{2}}}. 
\end{align}
Combining this with \eqref{z65} and \eqref{z66}, we obtain 
\begin{align}\nonumber
&	\left|\delta_h^z\nabla_zH_L(x,x+z,t)-\left(\nabla_zG_{A(x+z_0)}(z+h,t)\vert_{z_0=z+h}-\nabla_zG_{A(x+z_0)}(z,t)\vert_{z_0=z}\right)\right|\\&\lesssim   \int_{t-|h|^2}^{t} \frac{1}{ (t-s)^{\frac{1}{2}}}\frac{\exp(-c_5\frac{|z|^2}{8t})}{t^{\frac{d}{2}}\min\{t,1\}^{\frac{1}{2}}} ds+ \int^{t-|h|^2}_{0} |h|\exp(-c_5\frac{|z|^2}{8t}) \frac{\min\{t-s,s\}^{\frac{d}{2}}}{ (t-s)^{\frac{d+2}{2}}}\frac{1}{s^{\frac{d}{2}}\min\{s,1\}^{\frac{1}{2}}}ds\nonumber\\&\lesssim |h|\log(2+\frac{\sqrt{t}}{|h|})\frac{\exp(-c_5\frac{|z|^2}{8t})}{t^{\frac{d}{2}}\min\{t,1\}^{\frac{1}{2}}}.
\end{align}
Thus, 
\begin{align}\nonumber
&	\left|\delta_h^z\nabla_zH_L(x,x+z,t)-{{\delta_h^z\nabla_zG_{A(x+z_0)}(z,t)\vert_{z_0=z}}}\right|\\&\lesssim\nonumber |h|\log(2+\frac{\sqrt{t}}{|h|})\frac{\exp(-c_5\frac{|z|^2}{8t})}{t^{\frac{d}{2}}\min\{t,1\}^{\frac{1}{2}}}+\left|\nabla_zG_{A(x+z_0)}(z+h,t)\vert_{z_0=z+h}-\nabla_zG_{A(x+z_0)}(z+h,t)\vert_{z_0=z}\right|\\&\lesssim |h|\log(2+\frac{\sqrt{t}}{|h|})\frac{\exp(-c_5\frac{|z|^2}{8t})}{t^{\frac{d}{2}}\min\{t,1\}^{\frac{1}{2}}}. 
\end{align}
Similarly, we also obtain for any $|h|\leq \sqrt{\min\{t,1\}}/10$
\begin{align*}
&\left|H_L(x+z,x,t)-G_{A(x+z)}(z,t)\right|\lesssim \frac{1}{t^{\frac{d-1}{2}}(1+t)^{\frac{1}{2}}}\exp(-c_0\frac{|z|^2}{t}),\\&	\left|\nabla_zH_L(x+z,x,t)-\nabla_zG_{A(x+z_0)}(z,t)\vert_{z_0=z}\right|\lesssim \frac{\log (2+t)}{t^{\frac{d}{2}}}\exp(-c_0\frac{|z|^2}{t}),\\& \left|\delta_h^z\nabla_zH_L(x+z,x,t)-\delta_h^z\nabla_zG_{A(x+z_0)}(z,t)\vert_{z_0=z}\right|\lesssim |h|\log(2+\frac{\sqrt{t}}{|h|})\frac{\exp(-c_0\frac{|z|^2}{t})}{t^{\frac{d}{2}}\min\{t,1\}^{\frac{1}{2}}},
\end{align*}
for some $c_0>0$.
Thus, we obtain \eqref{z17c1} and \eqref{z17c2},\eqref{z17c3}.
\end{proof}
\begin{prop} \label{heatke2}The following inequalities hold
\begin{align}\label{z19a}
&	|	(\nabla_x+\nabla_y)H_L(x,y,t)|\lesssim \frac{\log(2+t)}{t^{\frac{d}{2}}}\exp(-c_0\frac{|x-y|^2}{2t}),\\&
|	(\delta_h^x\nabla_x+\delta_{-h}^y\nabla_y)H_L(x,y,t)|\lesssim |h|\log(2+\frac{\sqrt{t}}{|h|})\frac{\exp(-c_0\frac{|x-y|^2}{t})}{t^{\frac{d}{2}}\min\{t,1\}^{\frac{1}{2}}},\label{z19b}
\end{align}
for any $|h|\leq \sqrt{\min\{t,1\}}/10$ and $t>0$.
In particular, we get
\begin{align}\label{z67a}
&\int_{\mathbb{R}^d}	|	(\delta_h^x\nabla_x+\delta_{-h}^y\nabla_y)H_L(x,y,t)| dy\lesssim \min\left\{\frac{|h|}{\min\{t,1\}^{\frac{1}{2}}},1\right\} \log(2+\frac{\sqrt{t}}{|h|}),\\&
\int_{\mathbb{R}^d}	|\delta_h^x\nabla_xH_L(x,y,t)| dy\lesssim\frac{\min\{\frac{|h|}{\sqrt{t}},1\}}{\sqrt{t}}+|h|\frac{\log(2+\frac{\sqrt{t}}{|h|})}{\min\{t,1\}^{\frac{1}{2}}},\label{z67b}\\&
\sup_{x,y}|\delta_h^x\nabla_xH_L(x,y,t)|\lesssim |h|\log(2+\frac{1}{|h|})\frac{\log(2+t)}{t^{\frac{d}{2}}\min\{t,1\}},\label{z67c}
\end{align}
for any $|h|\leq 1$, $ t>0$.
\end{prop}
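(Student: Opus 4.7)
My plan is to bootstrap from the comparison estimates \eqref{z17c2} and \eqref{z17c3} of Lemma \ref{heatke}, using the self-adjointness of $L$ to rewrite the mixed derivatives in $x,y$ in terms of the second-argument gradient, and then to exhibit a delicate cancellation at the level of the frozen-coefficient Gaussians. Set $\varphi(a,b,t):=\nabla_b H_L(a,b,t)$; since $L$ is symmetric, $H_L(x,y,t)=H_L(y,x,t)$, and hence $\nabla_x H_L(x,y,t)=\varphi(y,x,t)$ and $\nabla_y H_L(x,y,t)=\varphi(x,y,t)$. For \eqref{z19a}, applying \eqref{z17c2} to each summand and using the oddness of $\nabla_z G_A(z,t)$ in $z$, with $z=y-x$,
\begin{equation*}
(\nabla_x+\nabla_y)H_L(x,y,t)=\bigl[\nabla_z G_{A(y)}(z,t)-\nabla_z G_{A(x)}(z,t)\bigr]+O\!\left(\tfrac{\log(2+t)}{t^{d/2}}e^{-c|z|^2/t}\right).
\end{equation*}
The bracket is bounded by $|A(y)-A(x)|$ times the $A$-derivative of $\nabla_z G_A$; since $|A(y)-A(x)|\lesssim |z|$ by the Lipschitz bound \eqref{z65bb}, direct computation gives that the bracket is $\lesssim |z|^2\,t^{-(d+2)/2}e^{-c|z|^2/t}\lesssim t^{-d/2}e^{-c'|z|^2/t}$ after absorbing the factor $|z|^2/t$ into the exponential. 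This yields \eqref{z19a}.

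For \eqref{z19b}, the symmetry decomposition produces $(\delta^x_h\nabla_x+\delta^y_{-h}\nabla_y)H_L(x,y,t)=\delta^x_h\varphi(y,x,t)+\delta^y_{-h}\varphi(x,y,t)$. Each summand is a $z$-finite difference of $\nabla_z H_L(a,a+z,t)$ at fixed $a$, so \eqref{z17c3} applies (using $|h|\le\sqrt{\min\{t,1\}}/10$) and approximates each by the corresponding $\delta^z_{\pm h}\nabla_z G_{A(a+z_0)}(z,t)|_{z_0=z}$, with errors already of the size stated in \eqref{z19b}. After expanding and using oddness of $\nabla_z G_A$, the two leading approximations sum to $F(x,y,z)-F(x+h,y-h,z-h)$, where $F(x,y,z):=\nabla_z G_{A(y)}(z,t)-\nabla_z G_{A(x)}(z,t)$. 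Since $F$ vanishes on the diagonal $\{x=y\}$, and each of $|\partial_xF|$, $|\partial_yF|$, $|\partial_zF|$ is controlled by $\|\nabla A\|_{L^\infty}\cdot|z|\,t^{-(d+2)/2}e^{-c|z|^2/t}$ (using the explicit formula for $G_A$ and absorbing polynomial factors in $|z|/\sqrt t$ into the Gaussian), the mean value theorem gives $|F-F(\cdot+h,\cdot-h,\cdot-h)|\lesssim |h|\,t^{-(d+1)/2}e^{-c'|z|^2/t}$, which is dominated by the target. Combining everything yields \eqref{z19b}.

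The corollaries \eqref{z67a}, \eqref{z67b}, \eqref{z67c} follow by integrating or taking suprema of \eqref{z19a} and \eqref{z19b}. For the regime $|h|\ge\sqrt{\min\{t,1\}}/10$ not covered by those pointwise bounds, one falls back on the crude estimates $\int|\nabla H_L|\,dy\lesssim\min\{t,1\}^{-1/2}$ and $|\nabla H_L|\lesssim t^{-d/2}\min\{t,1\}^{-1/2}$ obtained from \eqref{z17b}. The only extra subtlety is in \eqref{z67b}, where one decomposes $\delta^x_h\nabla_x H_L=(\delta^x_h\nabla_x+\delta^y_{-h}\nabla_y)H_L-\delta^y_{-h}\nabla_y H_L$; the first piece is handled by integrating \eqref{z19b}, and the second by the elementary Gaussian bound $\int|\delta^y_{-h}\nabla_y G_{A(y)}(y-x,t)|\,dy\lesssim\min\{|h|/\sqrt t,1\}/\sqrt t$ together with the \eqref{z17c3} error. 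The main obstacle is the cancellation step in \eqref{z19b}: one must recognize that the leading approximations of the two finite differences recombine into the single quantity $F(x,y,z)-F(x+h,y-h,z-h)$ with $F$ vanishing on the diagonal, so that an extra factor of $|h|$ emerges from $\|\nabla A\|_{L^\infty}$. Without this observation one recovers only the weaker bound obtained from \eqref{z19a} applied at two different base points, losing the crucial $O(|h|\log(2+\sqrt t/|h|))$ smallness.
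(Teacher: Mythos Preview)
Your approach is essentially the paper's: both derive \eqref{z19a}--\eqref{z19b} by applying \eqref{z17c2}--\eqref{z17c3} separately to the $x$- and $y$-gradients (using the symmetry $H_L(x,y,t)=H_L(y,x,t)$) and then exploiting a Lipschitz-in-$A$ cancellation between the two frozen-Gaussian leading terms. Two small corrections are in order.

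First, in your treatment of \eqref{z19b} the leading sum is not $F(x,y,z)-F(x+h,y-h,z-h)$ but rather the pure $z$-difference $F(x,y,z-h)-F(x,y,z)$ (the freezing in \eqref{z17c3} fixes $z_0=z$ \emph{before} the finite difference, so $x$ and $y$ do not shift in the coefficient). This is harmless: you then only need the bound on $\partial_z F=\nabla_z^2 G_{A(y)}-\nabla_z^2 G_{A(x)}$, which is $\lesssim |x-y|\,t^{-(d+2)/2}e^{-c|z|^2/t}$ by Lipschitz of $A$, and the mean value argument goes through as you describe.

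Second, and more substantively, for \eqref{z67a} in the regime $|h|\ge\sqrt{\min\{t,1\}}/10$ the crude fallback $\int|\nabla H_L|\,dy\lesssim\min\{t,1\}^{-1/2}$ from \eqref{z17b} is \emph{not} enough: for small $t$ it gives $t^{-1/2}$, whereas the target is $O(1)$. The fix (and what the paper does) is to regroup
\[
(\delta_h^x\nabla_x+\delta_{-h}^y\nabla_y)H_L(x,y,t)=\bigl[\nabla_xH_L(x+h,y,t)+\nabla_yH_L(x,y-h,t)\bigr]-(\nabla_x+\nabla_y)H_L(x,y,t)
\]
and apply \eqref{z19a} to \emph{both} brackets (the first is $(\nabla_x+\nabla_y)H_L$ evaluated at shifted arguments), yielding $\int\lesssim\log(2+t)\lesssim\log(2+\sqrt{t}/|h|)$ in this regime. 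Your crude fallback does suffice for \eqref{z67b} and \eqref{z67c}.
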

\begin{proof} By   \eqref{z17c2} and \eqref{z17c3}, we have  for $|h|\leq \sqrt{\min\{t,1\}}/10$,
\begin{align}\nonumber
&\left|\nabla_xH_L(x,y,t)-\nabla_xG_{A(z_0)}(x-y,t)\vert_{z_0=x}\right|\\&\quad\quad\quad\quad+\left|\nabla_yH_L(x,y,t)+\nabla_xG_{A(z_0)}(x-y,t)\vert_{z_0=y}\right|\lesssim \frac{\log (2+t)}{t^{\frac{d}{2}}}\exp(-c_0\frac{|x-y|^2}{t})\label{z20a},\\&
\left|\delta_h^x\nabla_xH_L(x,y,t)-\delta_h^x\nabla_xG_{A(z_0)}(x-y,t)\vert_{z_0=x}\right|\nonumber\\&\quad\quad\quad+	\left|\delta_h^y\nabla_yH_L(x,y,t)+\delta_{-h}^x\nabla_xG_{A(z_0)}(x-y,t)\vert_{z_0=y}\right|\lesssim |h|\log(2+\frac{\sqrt{t}}{|h|})\frac{\exp(-c_0\frac{|x-y|^2}{t})}{t^{\frac{d}{2}}\min\{t,1\}^{\frac{1}{2}}}.\label{z20b}
\end{align}
So, we obtain \eqref{z19a} and \eqref{z19b}. Moreover, we have for any $|h|\leq \sqrt{\min\{t,1\}}/10$
\begin{align}
&	\int_{\mathbb{R}^d}	|	(\delta_h^x\nabla_x+\delta_{-h}^y\nabla_y)H_L(x,y,t)| dy\overset{\eqref{z19b}}\lesssim  \frac{|h|\log(2+\frac{\sqrt{t}}{|h|})}{\min\{t,1\}^{\frac{1}{2}}};\\&
\int_{\mathbb{R}^d}	|\delta_h^x\nabla_xH_L(x,y,t)| dy\overset{\eqref{z20b}}\lesssim \int_{\mathbb{R}^d}	\frac{|h|}{t^{\frac{d+2}{2}}}\exp(-c_0\frac{|x-y|^2}{t})+|h|\log(2+\frac{\sqrt{t}}{|h|})\frac{\exp(-c_0\frac{|x-y|^2}{t})}{t^{\frac{d}{2}}\min\{t,1\}^{\frac{1}{2}}} dy\nonumber\\&\quad\quad\quad\quad\quad\quad\quad\quad\quad\quad\quad\sim |h|\left(\frac{1}{t}+\frac{\log(2+\frac{\sqrt{t}}{|h|})}{\min\{t,1\}^{\frac{1}{2}}}\right);\\&
\sup_{x,y}|\delta_h^x\nabla_xH_L(x,y,t)|\overset{\eqref{z20b}}\lesssim \frac{|h|}{t^{\frac{d+2}{2}}}+|h|\log(2+\frac{\sqrt{t}}{|h|})\frac{1}{t^{\frac{d}{2}}\min\{t,1\}^{\frac{1}{2}}}\lesssim |h|\log(2+\frac{1}{|h|})\frac{\log(2+t)}{t^{\frac{d}{2}}\min\{t,1\}};
\end{align}
when $\sqrt{\min\{t,1\}}/10\leq |h|\leq 1$
\begin{align}
\nonumber&	\int_{\mathbb{R}^d}	|	(\delta_h^x\nabla_x+\delta_{-h}^y\nabla_y)H_L(x,y,t)| dy\\&\leq 	\int_{\mathbb{R}^d}	|	(\nabla_x+\nabla_y)H_L(x,y,t)| dy+\int_{\mathbb{R}^d}	|	\nabla_xH_L(x+h,y,t)+	\nabla_yH_L(x,y-h,t)| dy\nonumber\\&
\overset{\eqref{z19a}}	\lesssim \int_{\mathbb{R}^d} \frac{\log (2+t)}{t^{\frac{d}{2}}}\exp(-c_0\frac{|x-y|^2}{t}) dy\lesssim \log(2+t)\lesssim \log(2+\frac{\sqrt{t}}{|h|});
\end{align}
and 
\begin{align}\nonumber
\int_{\mathbb{R}^d}	|\delta_h^x\nabla_xH_L(x,y,t)| dy&\overset{\eqref{z20a}}\lesssim 	\int_{\mathbb{R}^d}\frac{\exp(-c_0\frac{|x-y|^2}{t})}{t^{\frac{d+1}{2}}}+\frac{\log (2+t)}{t^{\frac{d}{2}}}\exp(-c_0\frac{|x-y|^2}{t}) dy\\&\sim \frac{\log (2+t)}{\min\{t,1\}^{\frac{1}{2}}}\lesssim \frac{\min\{\frac{|h|}{\sqrt{t}},1\}}{\sqrt{t}}+|h|\frac{\log(2+\frac{\sqrt{t}}{|h|})}{\min\{t,1\}^{\frac{1}{2}}};
\end{align}
\begin{align}
\sup_{x,y}|\delta_h^x\nabla_xH_L(x,y,t)|\lesssim \frac{1}{t^{\frac{d+1}{2}}}+\frac{\log (2+t)}{t^{\frac{d}{2}}}\lesssim |h|\log(2+\frac{1}{|h|})\frac{\log(2+t)}{t^{\frac{d}{2}}\min\{t,1\}}.
\end{align}
These imply \eqref{z67a}, \eqref{z67b} and \eqref{z67c}.
\end{proof}
\begin{rem} In view of Lemma  \ref{heatke} and Proposition \ref{heatke2}, 
it follows that for any $0\leq \alpha<1$ and $0\leq \beta \leq 1+\alpha$, the inequality 
\begin{equation}\label{smooeff}
\sup_{s\in [0,1]}s^{\frac{1+\alpha-\beta}{2}}	||\na e^{sL}v||_{C^\alpha}\lesssim {{ ||v||_{C^\beta}}}
\end{equation}
holds.
\end{rem}

We denote now
\begin{equation}\label{Ddefi}
D(f)(x)=\int_{\mathbb{R}^d}\frac{|f(x)-f(y)|^2}{|x-y|^{d+1}}dy.
\end{equation}
\begin{lemma} We have for any $h\not=0$
\begin{align}\label{z16}
D(\delta_h u)(x)\geq C||u||_{L^\infty}^{-1}|h|^{-1} |\delta_h u(x)|^{3}.
\end{align}
\end{lemma}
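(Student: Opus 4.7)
The plan is to establish \eqref{z16} by the nonlinear lower bound machinery of Constantin--Vasseur--Vicol, exploiting crucially the specific structure $g := \delta_h u$ (rather than treating $g$ as an abstract $L^\infty$ function). Set $q := g(x)$ and assume WLOG $q > 0$; note that $\|g\|_{L^\infty} \le 2\|u\|_{L^\infty}$, so the inequality is trivial unless $q$ is comparable to $\|u\|_{L^\infty}$. I would start by localizing the integral to the ``good set'' $S := \{y : g(y) \le q/2\}$, where $g(x) - g(y) \ge q/2$, to reduce matters to the geometric lower bound
\[
D(g)(x) \;\ge\; \frac{q^2}{4}\int_{S} \frac{dy}{|x-y|^{d+1}}.
\]

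The next step is the structural input that distinguishes $g = \delta_h u$ from a generic bounded function. For any ball $B = B(x,R)$ with $R \gtrsim |h|$, a change of variables gives the ``discrete divergence'' identity
\[
\int_{B} g(y)\, dy \;=\; \int_{B+h} u(z)\, dz - \int_{B} u(z)\, dz,
\]
and hence, using $|B \triangle (B+h)| \le C_d R^{d-1} |h|$, the smallness estimate $\bigl|\int_B g\bigr| \le C\|u\|_{L^\infty} R^{d-1} |h|$. Combining this with $\int_B (g(x)-g(y))\, dy \ge q|B| - C\|u\|_{L^\infty} R^{d-1}|h|$ and splitting $B = (S\cap B) \cup (S^c \cap B)$ (using $g(x)-g(y) \le q+\|g\|_\infty$ on $S$ and $<q/2$ on $S^c$), I would obtain a measure bound $|S \cap B(x,R)| \gtrsim $ an appropriate fraction, for any $R \ge C_0 \|u\|_{L^\infty}|h|/q$.

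To conclude, I would choose $R$ at the critical scale $R \sim \|u\|_{L^\infty}|h|/q$ and estimate $\int_S dy/|x-y|^{d+1}$ from below by layer-cake integration across dyadic scales $R \lesssim 2^k R \lesssim \infty$, tracking the measure lower bound on each scale. This produces a bound of the form $\int_S dy/|x-y|^{d+1} \gtrsim q/(\|u\|_{L^\infty}|h|)$, which, substituted into the first display, yields the claimed $D(g)(x) \ge C\|u\|_{L^\infty}^{-1}|h|^{-1} q^3$. For the complementary regime $|h| \gtrsim 1$ the inequality follows directly from the standard Constantin--Vasseur pointwise bound $D(g)(x) \ge c|g(x)|^3/\|g\|_{L^\infty}$ applied to $g$, since $\|g\|_{L^\infty} \le 2\|u\|_{L^\infty}$ and $|h|^{-1} \lesssim 1$.

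The main obstacle is the quantitative constant in the measure lower bound at step three: a naive truncation argument (bounding $g(x)-g(y) \le 3\|u\|_{L^\infty}$ on $S$) produces only $D(g)(x) \gtrsim q^4/(\|u\|_{L^\infty}^2 |h|)$, which is weaker than the target by a factor $q/\|u\|_{L^\infty}$ in the regime $q \ll \|u\|_{L^\infty}$. To recover the sharp cubic dependence one must exploit the cancellation more carefully, either by performing the estimate along families of translated chains $\{x+kh\}$ to accumulate a constant fraction of mass in $S\cap B(x,R)$, or by integrating the measure bound against the weight $|x-y|^{-d-1}$ dyadically, so that the contributions from scales above the critical scale $R_0 = \|u\|_{L^\infty}|h|/q$ add up to the sharp factor $q/(\|u\|_{L^\infty}|h|)$. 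This is the delicate combinatorial/measure-theoretic point where the difference-quotient structure of $g$ is genuinely used beyond the $L^\infty$ bound.
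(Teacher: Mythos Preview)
Your approach is genuinely different from the paper's, and the gap you yourself flag is real and not easily closed by the fixes you propose. The good-set argument gives $|S\cap B(x,R)|\gtrsim (q/\|u\|_{L^\infty})|B(x,R)|$ for every $R\ge R_0:=C_0\|u\|_{L^\infty}|h|/q$, and this density factor $q/\|u\|_{L^\infty}$ does not improve at larger scales. Dyadic summation therefore still produces $\int_S|x-y|^{-d-1}\,dy\gtrsim q^2/(\|u\|_{L^\infty}^2|h|)$, hence only $D(g)(x)\gtrsim q^4/(\|u\|_{L^\infty}^2|h|)$. The chain idea shows that \emph{some} point $x+kh$ with $k\lesssim\|u\|_{L^\infty}/q$ lies in $S$, but does not by itself give the needed positive-density measure bound with a constant fraction. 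So as written, your route does not reach the sharp cubic estimate.

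The paper's proof bypasses this entirely by a short cutoff-and-shift computation. One inserts a smooth cutoff $\eta(|z|/\lambda)$ vanishing for $|z|\le\lambda$, expands the square, and bounds the cross term by
\[
\Bigl|\int_{\mathbb{R}^d}\delta_h u(x+z)\,\frac{\eta(|z|/\lambda)}{|z|^{d+1}}\,dz\Bigr|
=\Bigl|\int_{\mathbb{R}^d} u(x+z)\Bigl[\tfrac{\eta(|z-h|/\lambda)}{|z-h|^{d+1}}-\tfrac{\eta(|z|/\lambda)}{|z|^{d+1}}\Bigr]dz\Bigr|
\le C\|u\|_{L^\infty}\,|h|\,\lambda^{-2},
\]
using only that the kernel $K_\lambda(z)=\eta(|z|/\lambda)|z|^{-d-1}$ satisfies $\int|K_\lambda(z-h)-K_\lambda(z)|\,dz\lesssim |h|\lambda^{-2}$. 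This yields $D(\delta_h u)(x)\ge C q^2\lambda^{-1}-C^{-1}q\,\|u\|_{L^\infty}|h|\,\lambda^{-2}$, and optimizing $\lambda\sim\|u\|_{L^\infty}|h|/q$ gives \eqref{z16} directly. The structural input is exactly your observation that integrals of $\delta_h u$ gain a factor of $|h|$, but applied to the \emph{weighted} integral against $K_\lambda$ rather than to averages over balls; this is what delivers the sharp exponent without any measure-theoretic combinatorics.
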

\begin{proof} We recall from \cite{cv1, cvt}.  For $\lambda>0$, 
\begin{align}\nonumber
D(\delta_h u)(x)&\geq 	\int_{\mathbb{R}^d}\eta(\frac{|z|}{\lambda})\frac{|\delta_hu(x)|^2}{|z|^{d+1}}dz-2|\delta_hu(x)|\left|\int_{\mathbb{R}^d}\delta_h u(x+z)\frac{\eta(\frac{|z|}{\lambda})}{|z|^{d+1}}dz\right|\\&
\nonumber
\geq C|\delta_hu(x)|^2\lambda^{-1}-2|\delta_hu(x)|||u||_{L^\infty}\int_{\mathbb{R}^d}\left|\frac{\eta(\frac{|z|}{\lambda})}{|z|^{d+1}}-\frac{\eta(\frac{|z-h|}{\lambda})}{|z-h|^{d+1}}\right|dz
\\&
\geq C|\delta_hu(x)|^2\lambda^{-1}-C^{-1}|\delta_hu(x)|||u||_{L^\infty}|h|\lambda^{-2}.
\end{align}
Here $\eta$ is a cutoff function in $[0,\infty)$ such that $\eta=0$ in $[0,1]$ and $\eta=1$ in $[2,\infty).$
This implies  the result. 
\end{proof}
We also define  $L^{-\frac{1}{2}}$ as the inverse operator of  $L^{\frac{1}{2}}$ given by 
\begin{equation}\label{inveropera}
L^{-\frac{1}{2}}u(x)={{\frac{1}{\Gamma(\frac{1}{2})}}}\int_{0}^{\infty}s^{-\frac{1}{2}} \int_{\mathbb{R}^d}H_L(x,y,s)u(y){{dy ds}}.
\end{equation}
\begin{lemma} The following inequalities hold
\begin{align}\label{z21}
|\delta_h\na L^{-\frac{1}{2}}(v)(x)|\lesssim  ||v||_{L^\infty}^{\frac{1}{3}}|h|^{\frac{1}{3}}D(\delta_h v)(x)^{\frac{1}{3}}+|h|\log(2/|h|)^2 ||v||_{L^1\cap L^\infty}^{\frac{2}{3}}||v||_{L^\infty}^{\frac{1}{3}},
\end{align}
for any  {{$|h|\lesssim  1$}}; 
\begin{align}\label{z27}
||	\na L^{-\frac{1}{2}}(v)||_{L^\infty}\lesssim {{( 1+||v||_{L^\infty\cap L^1})\log\left(2+||v||_{\dot C^\alpha}\right)}},
\end{align}
and 
\begin{align}\label{z27b}
||	\na L^{-\frac{1}{2}}(v)||_{C^{\alpha}}\lesssim ||v||_{C^{\alpha}}+||v||_{L^1},
\end{align}
for any $\alpha\in (0,1)$.
\end{lemma}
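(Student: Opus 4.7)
The three inequalities concern the operator $\na L^{-1/2}$, whose integral kernel is $K(x,y) = c\int_0^\infty s^{-1/2}\na_x H_L(x,y,s)\,ds$. From the heat kernel bound \eqref{z17b} one checks $|K(x,y)| \lesssim |x-y|^{-d}$ near the diagonal; from $\int H_L(x,y,s)\,dy \equiv 1$ (since $L$ annihilates constants) one has $\int K(x,y)\,dy = 0$ for each $x$. These two features, together with the refined heat kernel estimates of Lemma \ref{heatke} and Proposition \ref{heatke2}, drive the whole proof.

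For \eqref{z21}, I would exploit the vanishing property of $K$ together with the translation $y \to y+h$ to rewrite
\[
\delta_h \na L^{-\frac12}v(x) = \underbrace{\int K(x+h,y+h)\bigl(\delta_h v(y) - \delta_h v(x)\bigr)\,dy}_{I_1} + \underbrace{\int \bigl[K(x+h,y+h)-K(x,y)\bigr](v(y)-v(x))\,dy}_{I_2}.
\]
Split $I_1$ at the scale $|y-x| = |h|$. On the inner part, Holder's inequality with exponents $(3, 3/2)$, applied after the factoring $|\delta_h v(y)-\delta_h v(x)| \leq |\delta_h v(y)-\delta_h v(x)|^{2/3}(4\|v\|_{L^\infty})^{1/3}$ and inserting the weight $|y-x|^{(d+1)/2}$ to bring in the Dirichlet form, yields
\[
|I_1^{\mathrm{in}}| \lesssim \|v\|_{L^\infty}^{1/3}D(\delta_h v)(x)^{1/3}\Bigl(\int_{|y-x|<|h|}|K|^{3/2}|y-x|^{(d+1)/2}\,dy\Bigr)^{2/3} \sim \|v\|_{L^\infty}^{1/3}|h|^{1/3}D(\delta_h v)(x)^{1/3},
\]
the key computation being $\int_{|y-x|<|h|}|y-x|^{-3d/2+(d+1)/2}\,dy \sim |h|^{1/2}$. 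For the outer part of $I_1$ and the whole of $I_2$, use the identity $K(x+h,y+h)-K(x,y) = |h|\int_0^1 (\na_x+\na_y)K(x+\tau h, y+\tau h)\,d\tau$ combined with the bounds on $(\na_x+\na_y)H_L$ from Lemma \ref{heatke} and \eqref{z19a} (differentiated once more in $x$ and integrated in $s$) to produce an extra $|h|$ factor in the kernel. Pairing with $v(y)-v(x)$ and decomposing dyadically over $|y-x|\in(|h|,1)$ (with $L^\infty$ control) and $|y-x|>1$ (with $L^1$ control), then interpolating at the $L^{3/2}$-exponent, delivers the second term $|h|\log^2(2/|h|)\|v\|_{L^1\cap L^\infty}^{2/3}\|v\|_{L^\infty}^{1/3}$, with one logarithm coming from the dyadic sum on $(|h|,1)$ and the second from the logarithmic factor already present in Proposition \ref{heatke2} (e.g.\ in \eqref{z67c}).

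For \eqref{z27}, use $\int \na_x H_L(x,y,s)\,dy = 0$ to write $\na L^{-1/2}v(x) = c\int_0^\infty s^{-1/2}\int \na_x H_L(x,y,s)(v(y)-v(x))\,dy\,ds$ and split the time integral at $s = s_0 \in (0,1)$ and at $s = 1$: on $(0, s_0)$ use $|v(y)-v(x)|\leq \|v\|_{\dot C^\alpha}|y-x|^\alpha$ together with Gaussian decay in $y$ to obtain a contribution $\lesssim \alpha^{-1}s_0^{\alpha/2}\|v\|_{\dot C^\alpha}$; on $(s_0, 1)$ use $|v(y)-v(x)|\leq 2\|v\|_{L^\infty}$ and $\int|\na H_L|\,dy \lesssim s^{-1/2}$ to obtain $\lesssim \log(1/s_0)\|v\|_{L^\infty}$; on $(1,\infty)$ use $\|v\|_{L^1}$ paired with $\|\na H_L(x,\cdot,s)\|_{L^\infty_y}\lesssim s^{-d/2}$, giving a bounded contribution. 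Optimizing $s_0 \sim \|v\|_{\dot C^\alpha}^{-2/\alpha}$ (or $s_0 = 1$ if $\|v\|_{\dot C^\alpha}\leq 1$) yields the claimed logarithmic bound. For \eqref{z27b} the analogous representation for $\delta_h^x \na_x H_L$ (with the vanishing $\int \delta_h^x \na_x H_L\,dy = 0$), combined with the kernel-difference bounds \eqref{z67b} and \eqref{z20b} and the splitting $|y-x|\leq 1$ vs.\ $|y-x|\geq 1$ (using $\|v\|_{C^\alpha}$ near the diagonal and $\|v\|_{L^1}$ in the tail, plus partitioning the $s$-integral at $s=|h|^2$), gives the stated Holder estimate.

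The main obstacle is the first inequality \eqref{z21}: the cube-root-of-$D$ structure is a Cordoba--Cordoba-style interpolation tied to the splitting radius $|y-x| \sim |h|$, and since $K$ is only $|x-y|^{-d}$ near the diagonal, the outer part of $I_1$ is individually too big and must be combined with $I_2$ so that the leading singular contributions cancel, leaving only the $|h|$ factor coming from the translation defect. Keeping track of the two logarithms, and extracting the correct $L^{3/2}$-interpolation exponent $2/3$ between the $L^\infty$ and $L^1$ contributions in the dyadic tail, is the main technical hurdle.
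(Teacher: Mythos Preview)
Your treatment of \eqref{z27} and \eqref{z27b} matches the paper's: split the time integral in the heat-kernel representation of $L^{-1/2}$, use the Gaussian/log bounds from Lemma~\ref{heatke} and Proposition~\ref{heatke2}, and optimize the splitting scale. That part is fine.

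For \eqref{z21} your argument has a real gap. Fixing the spatial splitting radius at $|y-x|=|h|$ and applying H\"older with exponents $(3,3/2)$ on the inner ball does produce the term $\|v\|_{L^\infty}^{1/3}|h|^{1/3}D(\delta_h v)^{1/3}$, but the outer piece $I_1^{\mathrm{out}}$ cannot be absorbed as you claim. With only $|K(x+h,y+h)|\lesssim |y-x|^{-d}$ and $|\delta_h v(y)-\delta_h v(x)|\le 4\|v\|_{L^\infty}$, the outer integral over $|h|<|y-x|<1$ is of order $\log(1/|h|)\|v\|_{L^\infty}$, not $|h|\log^2(2/|h|)$. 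There is no cancellation between $I_1^{\mathrm{out}}$ and $I_2$: the two integrands pair the kernel with different functions ($\delta_h v$ versus $v$) and do not combine. Moreover, your plan to bound $K(x+h,y+h)-K(x,y)$ via the mean-value identity requires a pointwise bound on $(\nabla_x+\nabla_y)\nabla_x H_L$, i.e.\ a second derivative of the heat kernel, which is not available for merely Lipschitz coefficients; the paper only ever uses finite-difference first-derivative bounds such as \eqref{z19b}.

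The paper's route for \eqref{z21} is different and avoids these issues. It splits in the \emph{time} variable of the heat representation at a \emph{free} scale $\lambda^2\in(0,1]$, writes $\delta_h^x\nabla_x H_L = -\delta_{-h}^y\nabla_y H_L + (\delta_h^x\nabla_x+\delta_{-h}^y\nabla_y)H_L$, moves the $\delta_{-h}^y\nabla_y$ onto $v$ to produce $\delta_h v(y)-\delta_h v(x)$, and then applies Cauchy--Schwarz (not $(3,3/2)$) to get
\[
|\delta_h\nabla L^{-1/2}v(x)|\lesssim \sqrt{\lambda\,D(\delta_h v)(x)}+\frac{|h|}{\lambda}\|v\|_{L^\infty}+|h|\log^2(2/|h|)\|v\|_{L^1\cap L^\infty}^{2/3}\|v\|_{L^\infty}^{1/3}.
\]
The cube-root structure then emerges from \emph{optimizing} $\lambda\sim\bigl(|h|^2\|v\|_{L^\infty}^2/D(\delta_h v)(x)\bigr)^{1/3}$, which balances the first two terms. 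The point is that the correct splitting scale depends on the unknown $D(\delta_h v)(x)$, so it cannot be fixed equal to $|h|$ as you do.
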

\begin{proof}1) For $\lambda\in (0,1], |h|\lesssim 1$, we have 
\begin{align}\nonumber
|\delta_h\na L^{-\frac{1}{2}}(v)(x)|&\lesssim\int_{0}^{\lambda^2}\left| \int_{\mathbb{R}^d} \delta_h^x\na_x H_L(x,y,s)v(y) dy\right| s^{-\frac{1}{2}}ds+ \int_{\lambda^2}^2\int_{\mathbb{R}^d} |\delta_h^x\na_x H_L(x,y,s)| dy s^{-\frac{1}{2}}ds ||v||_{L^\infty}\\&\nonumber+ \int_{2}^\infty\int_{\mathbb{R}^d} |\delta_h^x\na_x H_L(x,y,s)||v(y)| dy s^{-\frac{1}{2}}ds\\&=I_1+I_2+I_3.
\end{align}
In view of \eqref{z20b}, \eqref{z67a} and \eqref{z67b}, we have
\begin{align}
&I_3\lesssim 	\int_{2}^{\infty}\int_{\mathbb{R}^d} |h|\log(2+\frac{\sqrt{s}}{|h|})\frac{\exp(-c_0\frac{|x-y|^2}{s})}{s^{\frac{d}{2}}}|v(y)| dy s^{-\frac{1}{2}}ds\lesssim |h|\log(\frac{2}{|h|}) ||v||_{L^1}^{\frac{2}{3}}||v||_{L^\infty}^{\frac{1}{3}};
\\&I_2\nonumber	\lesssim  \int_{\lambda^2}^2\left(\frac{\min\{\frac{|h|}{\sqrt{s}},1\}}{\sqrt{s}}+|h|\frac{\log(2+\frac{\sqrt{s}}{|h|})}{s^{\frac{1}{2}}} \right)s^{-\frac{1}{2}}ds||v||_{L^\infty}\\&
\quad\lesssim \left(\frac{|h|}{\lambda}+|h|\log(\frac{2}{|h|})^2\right)||v||_{L^\infty};
\end{align}
and 
\begin{align}\nonumber
I_1&\lesssim \int_{0}^{\lambda^2}\left| \int_{\mathbb{R}^d} \delta_{-h}^y\na_y H_L(x,y,s)v(y) dy\right| s^{-\frac{1}{2}}ds+\int_{0}^{\lambda^2}\int_{\mathbb{R}^d} |\left(\delta_h^x\na_x +\delta_{-h}^y\na_y\right)H_L(x,y,s)| dys^{-\frac{1}{2}}ds||v||_{L^\infty}\\&\lesssim\nonumber \int_{0}^{\lambda^2} \int_{\mathbb{R}^d} |\na_y H_L(x,y,s)||\delta_hv(y)-\delta_hv(x)  |dy s^{-\frac{1}{2}}ds+{{\int_{0}^{1}}}\min\{\frac{|h|}{s^{\frac{1}{2}}},1\} \log(2+\frac{\sqrt{s}}{|h|})s^{-\frac{1}{2}}ds||v||_{L^\infty}
\\&\lesssim \sqrt{\lambda D(\delta_h v)(x)}+|h|\log(2/|h|)^2||v||_{L^\infty},
\end{align}
here we have used the fact that 
\begin{equation}
\int_{0}^{\lambda^2} \left(\int_{\mathbb{R}^d} |\na_y H_L(x,y,s)|^2|x-y|^{d+1}dy\right)^{\frac{1}{2}} s^{-\frac{1}{2}}ds\lesssim \sqrt{\lambda}.
\end{equation}
Therefore, 
\begin{align}\label{z73}
|\delta_h\na L^{-\frac{1}{2}}(v)(x)|\lesssim  \sqrt{\lambda D(\delta_h v)(x)}+\frac{|h|}{\lambda}||v||_{L^\infty}+|h|\log(2/|h|)^2 ||v||_{L^1\cap L^\infty}^{\frac{2}{3}}||v||_{L^\infty}^{\frac{1}{3}}.
\end{align}
Choosing
\begin{equation}
\lambda=\left(\frac{|h|^2||v||_{L^\infty}^2}{D(\delta_h v)(x)+|h|^2||v||_{L^\infty}^2}\right)^{\frac{1}{3}}
\end{equation}
to get \eqref{z21}.\\
2) We have for $\lambda\in (0,1]$
\begin{align}\nonumber
|\na L^{-\frac{1}{2}}(v)(x)|&\lesssim \int_{0}^{\lambda^2}\left| \int_{\mathbb{R}^d} \na_y H_L(x,y,s)v(y) dy\right| s^{-\frac{1}{2}}ds+\int_{0}^{\lambda^2} \int_{\mathbb{R}^d} |(\na_x+\na_y) H_L(x,y,s)|dy s^{-\frac{1}{2}}ds ||v||_{L^\infty}\\&+ \int_{\lambda^2}^2\int_{\mathbb{R}^d} |\na_x H_L(x,y,s)| dy s^{-\frac{1}{2}}ds ||v||_{L^\infty}+ \int_{2}^\infty \sup_{y\in \mathbb{R}^d}|\na_x H_L(x,y,s)| s^{-\frac{1}{2}}ds||v||_{L^1}.
\end{align}
Using \eqref{z19a} and \eqref{z20a}, we deduce
\begin{align}\nonumber
|\na L^{-\frac{1}{2}}(v)(x)|&\lesssim \int_{0}^{\lambda^2} \int_{\mathbb{R}^d} |\na_y H_L(x,y,s)| |v(y)-v(x)| dy s^{-\frac{1}{2}}ds+\log(2+\frac{1}{\lambda})||v||_{L^\infty}+||v||_{L^1}\\&\lesssim \nonumber\int_{0}^{\lambda^2} \int_{\mathbb{R}^d} |\na_y H_L(x,y,s)| |x-y|^{\alpha}dy s^{-\frac{1}{2}}ds|| v||_{\dot C^\alpha}+\log(2+\frac{1}{\lambda})||v||_{L^\infty}+||v||_{L^1}
\\&\lesssim \lambda^\alpha|| v||_{\dot C^\alpha}+\log(2+\frac{1}{\lambda})||v||_{L^\infty}+||v||_{L^1}.\label{z74}
\end{align}
This implies \eqref{z27} by choosing $\lambda^\alpha= \frac{1}{1+|| v||_{\dot C^\alpha}}.$\vspace*{0.2cm}\\
3) Now we prove  \eqref{z27b}. In \eqref{z74}, we take $\lambda^\alpha=\frac{||v||_{L^\infty}}{||v||_{L^\infty}+|| v||_{\dot C^\alpha}}$ to obtain
\begin{equation}
||\na L^{-\frac{1}{2}}(v)||_{L^\infty}\lesssim ||v||_{L^\infty}+\log(2+\frac{|| v||_{\dot C^\alpha}}{||v||_{L^\infty}})||v||_{L^\infty}+||v||_{L^1}\lesssim ||v||_{C^{\alpha}}+||v||_{L^1}.
\end{equation}
Moreover,  in view of the proof of \eqref{z73}, we have 
\begin{align*}
|\delta_h\na L^{-\frac{1}{2}}(v)(x)|\lesssim  \int_{0}^{1}\left| \int_{\mathbb{R}^d} \delta_{-h}^y\na_y H_L(x,y,s)v(y) dy\right|s^{-\frac{1}{2}}ds+{{|h|\log(2/|h|)^2||v||_{L^1\cap L^\infty},}}
\end{align*}
for any $|h|\leq \frac{1}{4}$. 
Thus, it is enough to show that 
\begin{equation}\label{z75}
\int_{0}^{1}\left| \int_{\mathbb{R}^d} \delta_{-h}^y\na_y H_L(x,y,s)v(y) dy\right|s^{-\frac{1}{2}}ds\lesssim |h|^{\alpha}\left( ||v||_{C^{\alpha}}+||v||_{L^1}\right),
\end{equation}
for any $|h|\leq \frac{1}{4}$. \vspace*{0.2cm}\\Indeed, using \eqref{z20a} and \eqref{z20b}, we have 
\begin{align}\nonumber
&\int_{0}^{|h|^2}\left| \int_{\mathbb{R}^d} \delta_{-h}^y\na_y H_L(x,y,s)v(y) dy\right|s^{-\frac{1}{2}}ds= \int_{0}^{|h|^2}\left| \int_{\mathbb{R}^d} \na_y H_L(x,y,s)(\delta_{h}v(y)-\delta_{h}v(x)) dy\right|s^{-\frac{1}{2}}ds\\&\quad\quad\quad\lesssim \int_{0}^{|h|^2} \int_{\mathbb{R}^d} |\na_y H_L(x,y,s)||y-x|^{\frac{\alpha}{2}}|h|^{\frac{\alpha}{2}}dys^{-\frac{1}{2}}ds ||v||_{\dot C^{\alpha}}\nonumber\\&\quad\quad\quad\lesssim
\int_{0}^{|h|^2} \int_{\mathbb{R}^d} \frac{1}{s^{\frac{d+1}{2}}}\exp(-c_0\frac{|x-y|^2}{s})|y-x|^{\frac{\alpha}{2}}|h|^{\frac{\alpha}{2}}dys^{-\frac{1}{2}}ds ||v||_{\dot C^{\alpha}} \nonumber\\&\quad\quad\quad\lesssim
\int_{0}^{|h|^2} |h|^{\frac{\alpha}{2}}s^{-1+\frac{\alpha}{4}}ds ||v||_{\dot C^{\alpha}}\sim |h|^\alpha ||v||_{\dot C^{\alpha}},
\end{align}
and 
\begin{align}
&	\int_{|h|^2}^1\left| \int_{\mathbb{R}^d} \delta_{-h}^y\na_y H_L(x,y,s)v(y) dy\right|s^{-\frac{1}{2}}ds\leq  \int_{|h|^2}^1\int_{\mathbb{R}^d}| \delta_{-h}^y\na_y H_L(x,y,s)||x-y|^\alpha dys^{-\frac{1}{2}}ds ||v||_{\dot C^{\alpha}}
\nonumber\\&\quad\quad\quad\lesssim \int_{|h|^2}^1\int_{\mathbb{R}^d}\frac{|h|}{s^{\frac{d+2}{2}}}\exp(-c_0\frac{|x-y|^2}{s})|x-y|^\alpha dys^{-\frac{1}{2}}ds ||v||_{\dot C^{\alpha}}\nonumber\\&\quad\quad\quad\lesssim \int_{|h|^2}^1|h|s^{-\frac{3}{2}+\frac{\alpha}{2}}ds ||v||_{\dot C^{\alpha}}\sim |h|^\alpha ||v||_{\dot C^{\alpha}}.
\end{align}
Therefore, we obtain \eqref{z75}. 
\end{proof}
\begin{lemma} We have  for any $|h|\leq  1$,
\begin{align}\label{z22}
|h|^{-1}|\delta_h \na L^{-\frac{1}{2}}(v)|  |\delta_h v(x)|^2\lesssim ||v||_{L^\infty}	\left(D(\delta_h v)(x)+|h|^{2} \log(2/|h|)^6 ||v||_{L^\infty\cap L^1}^2\right).
\end{align}
\end{lemma}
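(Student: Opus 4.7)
\medskip

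\noindent\textbf{Proof plan for \eqref{z22}.} The strategy is to plug the finite difference bound \eqref{z21} into the left-hand side and absorb the factor $|\delta_h v(x)|^2$ using the cubic lower bound \eqref{z16}, splitting into exactly the two summands that appear on the right of \eqref{z22}. Concretely, by \eqref{z21} we have
\[
|h|^{-1}|\delta_h \na L^{-\frac{1}{2}}(v)(x)|\,|\delta_h v(x)|^2
\;\lesssim\; T_1(x) + T_2(x),
\]
where
\[
T_1(x) = \|v\|_{L^\infty}^{\frac{1}{3}}|h|^{-\frac{2}{3}} D(\delta_h v)(x)^{\frac{1}{3}}|\delta_h v(x)|^2,
\]
\[
T_2(x) = \log(2/|h|)^2\|v\|_{L^1\cap L^\infty}^{\frac{2}{3}}\|v\|_{L^\infty}^{\frac{1}{3}}|\delta_h v(x)|^2.
\]

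\noindent For $T_1$ I would substitute the cubic bound from \eqref{z16}, rewritten as $|\delta_h v(x)|^3 \lesssim \|v\|_{L^\infty}|h|\,D(\delta_h v)(x)$, raised to the $2/3$ power:
\[
|\delta_h v(x)|^2 \lesssim \bigl(\|v\|_{L^\infty}|h|\,D(\delta_h v)(x)\bigr)^{2/3}.
\]
This gives exactly $T_1(x) \lesssim \|v\|_{L^\infty}\,D(\delta_h v)(x)$, since the exponents of $\|v\|_{L^\infty}$ add to $1$, the negative and positive powers of $|h|$ cancel, and the powers of $D$ add to $1$.

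\noindent For $T_2$ I would apply the same substitution to get
\[
T_2(x) \lesssim \log(2/|h|)^2 \|v\|_{L^1\cap L^\infty}^{\frac{2}{3}}\|v\|_{L^\infty}\, |h|^{\frac{2}{3}} D(\delta_h v)(x)^{\frac{2}{3}},
\]
and then apply Young's inequality with conjugate exponents $p=3$ and $q=\tfrac{3}{2}$ to the product $\bigl(|h|^{2/3}\log(2/|h|)^2\|v\|_{L^1\cap L^\infty}^{2/3}\bigr)\cdot D(\delta_h v)^{2/3}$. Cubing the first factor yields $|h|^2 \log(2/|h|)^6\|v\|_{L^1\cap L^\infty}^2$, while raising the second to $3/2$ yields $D(\delta_h v)$. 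Hence
\[
T_2(x) \lesssim \|v\|_{L^\infty}\bigl(D(\delta_h v)(x) + |h|^2\log(2/|h|)^6\|v\|_{L^1\cap L^\infty}^2\bigr).
\]

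\noindent Adding the two estimates gives \eqref{z22}. No step is really the obstacle; the proof is a short algebraic matching exercise once one recognizes that the exponents in \eqref{z21} are precisely those dictated by the scaling of \eqref{z16}, so that the first term in \eqref{z21} absorbs cleanly against $D(\delta_h v)$, while the remaining lower-order piece is harmless by Young.
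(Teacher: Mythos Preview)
Your proof is correct and follows essentially the same route as the paper: both plug \eqref{z21} into the left-hand side, use \eqref{z16} in the form $|\delta_h v(x)|^2\lesssim(\|v\|_{L^\infty}|h|\,D(\delta_h v)(x))^{2/3}$ to absorb the difference-quotient factor, and then close the second term by Young's inequality with exponents $3$ and $3/2$. The only cosmetic difference is that the paper inserts the cubic bound for $|\delta_h v|^2$ before splitting into the two terms, whereas you split first and then substitute in each piece; the arithmetic is identical.
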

\begin{proof} In view of \eqref{z16} and \eqref{z21}, we have 
\begin{align*}
& |h|^{-1}|\delta_h \na L^{-\frac{1}{2}}(v)|  |\delta_h v(x)|^2\\&\lesssim 	 |h|^{-1}\left(||v||_{L^\infty}^{\frac{1}{3}}|h|^{\frac{1}{3}}D(\delta_h v)(x)^{\frac{1}{3}}+|h|\log(2/|h|)^2 ||v||_{L^1\cap L^\infty}^{\frac{2}{3}}||v||_{L^\infty}^{\frac{1}{3}}\right) 	\left(||v||_{L^\infty}|h|	D(\delta_h v)(x)\right)^{\frac{2}{3}}
\\&\lesssim  ||v||_{L^\infty}	D(\delta_h v)(x)+|h|^{\frac{2}{3}} \log(2/|h|)^2||v||_{L^1\cap L^\infty}^{\frac{2}{3}}||v||_{L^\infty}		D(\delta_h v)(x)^{\frac{2}{3}}
\\&\lesssim  ||v||_{L^\infty}	\left(D(\delta_h v)(x)+|h|^{2} \log(2/|h|)^6 ||v||_{L^\infty\cap L^1}^2\right).	\end{align*}
This gives \eqref{z22}. 
\end{proof}
\begin{coro}\la{threquarts} We have 
{{\begin{align}\nonumber
&	|h|^{-1}|\delta_h \na L^{-\frac{1}{2}}(v)|  |\delta_h (\eta v)(x)|^2\lesssim ||v||_{L^\infty}	D(\delta_h ( \eta v))(x)+||v||_{L^\infty}||\delta_hv||_{L^\infty}^2\\&\nonumber\quad\quad\quad\quad\quad\quad\quad\quad\quad\quad+|h| \left( ||v||_{L^\infty\cap L^1}^2||v||_{L^\infty}	+( 1+||v||_{L^\infty\cap L^1})\log\left(2+||v||_{\dot C^\alpha}\right) ||v||_{L^\infty}^2\right)\\&\quad\quad\quad\quad\quad\quad\quad\quad\quad\quad+|h|^2||v||_{L^\infty} ||v||_{\dot C^{\frac{3}{4}}} ||v||_{\dot C^{\frac{1}{4}}},\label{z25}
\end{align}}}
where $\eta$ is a {{Lipschitz}}  cutoff function such that $\eta=1$ in $B(0,r_0)$ and  $\eta=0$ in $B(0,2r_0)^c$ and $\text{supp} v\subseteq B(0,r_1)$ for some $r_1\geq 4r_0$
\end{coro}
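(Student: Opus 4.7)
The plan is to exploit the Leibniz-type formula for finite differences of a product. Writing
$\delta_h(\eta v)(x) = \eta(x+h)\,\delta_h v(x) + v(x)\,\delta_h\eta(x)$
and using $|\eta|\le 1$ together with $|\delta_h\eta(x)|\le C|h|$ (since $\eta$ is Lipschitz), we obtain
$|\delta_h(\eta v)(x)|^2\le 2|\delta_h v(x)|^2 + C|h|^2|v(x)|^2$. The left-hand side splits accordingly as $T_A+T_B$ with
$T_A = 2|h|^{-1}|\delta_h\na L^{-1/2}(v)|\,|\delta_h v(x)|^2$ and $T_B = C|h|\,|v(x)|^2\,|\delta_h\na L^{-1/2}(v)|$.

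For $T_B$, I would use the crude bound $|\delta_h\na L^{-1/2}(v)|\le 2\|\na L^{-1/2}(v)\|_{L^\infty}$ followed by \eqref{z27} to obtain
$T_B\lesssim |h|\,(1+\|v\|_{L^\infty\cap L^1})\log(2+\|v\|_{\dot C^\alpha})\,\|v\|_{L^\infty}^2$,
matching the logarithmic contribution in the target. For $T_A$, I would apply Lemma \eqref{z22} directly to conclude
$T_A\lesssim \|v\|_{L^\infty}\bigl(D(\delta_h v)(x) + |h|^2\log^6(2/|h|)\|v\|_{L^\infty\cap L^1}^2\bigr)$,
and since $|h|\le 1$ the second summand is absorbed into $|h|\|v\|_{L^\infty}\|v\|_{L^\infty\cap L^1}^2$, another target term.

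It remains to transfer $\|v\|_{L^\infty}D(\delta_h v)(x)$ to $\|v\|_{L^\infty}D(\delta_h(\eta v))(x)$ up to the remaining target terms $\|v\|_{L^\infty}\|\delta_h v\|_{L^\infty}^2$ and $|h|^2\|v\|_{L^\infty}\|v\|_{\dot C^{3/4}}\|v\|_{\dot C^{1/4}}$. Writing $v = g + r$ with $g=\eta v$ and $r = (1-\eta)v$, one has $D(\delta_h v)(x)\le 2D(\delta_h g)(x) + 2D(\delta_h r)(x)$, so the whole question reduces to bounding $D(\delta_h r)(x)$. Using the identity
$\delta_h r = (1-\eta(\cdot+h))\delta_h v - \delta_h\eta\cdot v$,
I would further split $D(\delta_h r)\le 2D(Q) + 2D(P)$ with $Q = (1-\eta(\cdot+h))\delta_h v$ and $P = \delta_h\eta\cdot v$. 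For $D(Q)(x)$, use that $\|Q\|_{L^\infty}\le \|\delta_h v\|_{L^\infty}$, that $1-\eta(\cdot+h)$ is uniformly Lipschitz, and that $Q$ is compactly supported; a scale-$|h|$ split of the integral yields the $\|\delta_h v\|_{L^\infty}^2$ contribution. For $D(P)(x)$, the key is that $\delta_h\eta$ is $O(|h|)$ in $L^\infty$ while uniformly Lipschitz in its argument; together with the interpolation $\|\delta_h v\|_{L^\infty}^2\lesssim |h|\|v\|_{\dot C^{3/4}}\|v\|_{\dot C^{1/4}}$ and the Hölder regularity of $v$, one obtains a bound of the form $|h|\|v\|_{L^\infty}^2 + |h|^2\|v\|_{\dot C^{3/4}}\|v\|_{\dot C^{1/4}}$ after integrating over the two scales $|x-y|\le |h|$ and $|x-y|>|h|$. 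Multiplying these bounds by $\|v\|_{L^\infty}$ yields exactly the remaining target terms.

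The hard part is the estimate on $D(\delta_h r)(x)$. One must carefully balance the $L^\infty$-smallness of $\delta_h\eta$ (giving an $O(|h|)$ factor), the Lipschitz regularity of $\eta$ (giving an $O(|x-y|)$ factor for increments of $\delta_h\eta$), and the only Hölder regularity of $v$, all against the singular weight $|x-y|^{-(d+1)}$. The delicate point is producing the $|h|^2$ in front of $\|v\|_{\dot C^{3/4}}\|v\|_{\dot C^{1/4}}$: one factor of $|h|$ comes from $\delta_h\eta$ itself, and the second $|h|$ arises by interpolating the $\min(|h|, |x-y|)$ pointwise bound for increments of $\delta_h\eta$ against the scale separation of the integration region. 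This splitting argument is what makes the corollary usable in Lemma \ref{drivencalpha} with a constant that is independent of $\alpha$ in front of the dissipative term $D(\delta_h(\eta\tilde\theta))$.
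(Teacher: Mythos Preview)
Your overall architecture is close to the paper's, but there is a genuine gap in the transfer step from $D(\delta_h v)(x)$ to $D(\delta_h(\eta v))(x)$, and it originates in your very first move. When you write $|\delta_h(\eta v)(x)|^2\le 2|\delta_h v(x)|^2+C|h|^2|v(x)|^2$, you discard the prefactor $\eta(x+h)$ (or $\eta(x)$). After applying \eqref{z22} you are then forced to control the bare quantity $D(\delta_h v)(x)$ by $D(\delta_h(\eta v))(x)$ plus the listed lower-order terms, uniformly in $x$. This is not possible: take $x$ in the annulus where $\eta(x)=\eta(x+h)=0$ but $v$ is still present (such points exist because $\mathrm{supp}\,v\subset B(0,r_1)$ with $r_1\ge 4r_0$ strictly contains $\mathrm{supp}\,\eta$). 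There the left-hand side of the corollary vanishes, but your $T_A$ does not, and $D(\delta_h(\eta v))(x)$ only sees tail contributions while $D(\delta_h v)(x)$ retains the full local singularity. Concretely, in your bound for $D(Q)(x)$ with $Q=(1-\eta(\cdot+h))\delta_h v$, the expansion
\[
Q(y)-Q(x)=(1-\eta(y+h))\bigl(\delta_h v(y)-\delta_h v(x)\bigr)-\bigl(\eta(y+h)-\eta(x+h)\bigr)\delta_h v(x)
\]
has a first term which, for $x$ with $1-\eta(x+h)\approx 1$ and $y$ close to $x$, is essentially $\delta_h v(y)-\delta_h v(x)$; integrating against $|x-y|^{-(d+1)}$ reproduces $D(\delta_h v)(x)$ and you are back where you started. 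Neither the compact support of $Q$ nor the Lipschitz bound on $1-\eta$ helps with this local singularity, and a scale-$|h|$ split cannot yield $\|\delta_h v\|_{L^\infty}^2$ here.

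The paper avoids this by retaining the localization factor. It uses the reverse inequality $|\eta(x)\delta_h v(x)|^2\ge\tfrac12|\delta_h(\eta v)(x)|^2-C|h|^2\|v\|_{L^\infty}^2$, so that after \eqref{z22} one has to control $\eta(x)^2 D(\delta_h v)(x)$ rather than $D(\delta_h v)(x)$. The key identity
\[
\delta_z\delta_h(\eta v)(x)-\eta(x)\,\delta_z\delta_h v(x)=v(x+h)\,\delta_z\delta_h\eta(x)+(\delta_z\eta)(x)\,\delta_h v(x+z)+(\delta_h\eta)(x+z)\,\delta_z v(x+h)
\]
then gives $\eta(x)^2 D(\delta_h v)(x)\le 2D(\delta_h(\eta v))(x)+C\bigl(|h|\|v\|_{L^\infty}^2+\|\delta_h v\|_{L^\infty}^2+|h|^2\|v\|_{\dot C^{3/4}}\|v\|_{\dot C^{1/4}}\bigr)$, the three error terms corresponding exactly to the three pieces of the commutator. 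Your argument becomes correct if, in the very first step, you keep the factor $\eta(x+h)^2$ (or $\eta(x)^2$) in front of $|\delta_h v(x)|^2$ and then carry it through \eqref{z22}; the transfer step is then the paper's \eqref{z24} rather than the unlocalized $D(\delta_h v)\le 2D(\delta_h g)+2D(\delta_h r)$.
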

\begin{proof}
Since $
|\delta_h (\eta v)(x)-\eta(x) \delta_hv(x)|\lesssim  |h|||v||_{L^\infty}$, we have
\begin{equation}
|\eta(x) \delta_hv(x)|^2\geq  \frac{1}{2}	|\delta_h (\eta v)(x)|^2-C|h|^2||v||_{L^\infty}^2.
\end{equation}
Thus, 
\begin{align}\label{z23}
\eta^2 |h|^{-1}|\delta_h \na L^{-\frac{1}{2}}(v)|  |\delta_h  v(x)|^2\geq  \frac{1}{2}|h|^{-1}|\delta_h \na L^{-\frac{1}{2}}(v)|  |\delta_h (\eta v)(x)|^2-C|h|||\na L^{-\frac{1}{2}}(v)||_{L^\infty} ||v||_{L^\infty}^2.
\end{align}
Note that
\begin{equation}
\delta_z\delta_h( \eta v)(x)-\eta (x)	\delta_z\delta_h v(x)=v(x+h)	(\delta_z\delta_h\eta)(x)+(\delta_z\eta)(x)\delta_hv(x+z)+(\delta_h \eta)(x+z)(\delta_zv)(x+h),
\end{equation}
we have 
\begin{align}\nonumber \eta^2D(\delta_h v)(x)&\leq 2 D(\delta_h (\eta v))(x)+C\int_{\mathbb{R}^d}v(x+h)^2	|\delta_z\delta_h\eta(x)|^2 \frac{dz}{|z|^{d+1}}\\&\nonumber\quad\quad+C\int_{\mathbb{R}^d}|\delta_hv(x+z)|^2|\delta_z\eta(x)|^2\frac{dz}{|z|^{d+1}}+C\int_{\mathbb{R}^d}|\delta_zv(x+h)|^2|\delta_h\eta(x+z)|^2\frac{dz}{|z|^{d+1}}\\&\leq 
2 D(\delta_h (\eta v))(x)+C\left({{|h|}}||v||_{L^\infty}^2+||\delta_hv||_{L^\infty}^2+C|h|^2 ||v||_{\dot C^{\frac{3}{4}}} ||v||_{\dot C^{\frac{1}{4}}}\right).\label{z24}
\end{align}
Here we have used the fact that 
\begin{equation}\la{weird}
\int_{\mathbb{R}^d}|\delta_zv(x+h)|^2\frac{dz}{|z|^{d+1}}\lesssim ||v||_{\dot C^{\frac{3}{4}}} ||v||_{\dot C^{\frac{1}{4}}}.	
\end{equation}
Combining \eqref{z23}, \eqref{z24} with \eqref{z22}, we have
\begin{align}\nonumber
&	|h|^{-1}|\delta_h \na L^{-\frac{1}{2}}(v)|  |\delta_h (\eta v)(x)|^2\lesssim ||v||_{L^\infty}	D(\delta_h (\eta v))(x)+||v||_{L^\infty}||\delta_hv||_{L^\infty}^2\\&+{{|h|}}||v||_{L^\infty\cap L^1}^2||v||_{L^\infty}+|h|^2||v||_{L^\infty} ||v||_{\dot C^{\frac{3}{4}}} ||v||_{\dot C^{\frac{1}{4}}}	+|h|||\na L^{-\frac{1}{2}}(v)||_{L^\infty} ||v||_{L^\infty}^2\nonumber\\&\overset{\eqref{z27}}\lesssim\nonumber ||v||_{L^\infty}	D(\delta_h (\eta v))(x)+||v||_{L^\infty}||\delta_hv||_{L^\infty}^2++|h|^2||v||_{L^\infty} ||v||_{\dot C^{\frac{3}{4}}} ||v||_{\dot C^{\frac{1}{4}}}\\&\quad\quad+|h|\left( ||v||_{L^\infty\cap L^1}^2||v||_{L^\infty}+ {{( 1+||v||_{L^\infty\cap L^1})\log\left(2+||v||_{\dot C^\alpha}\right)}} ||v||_{L^\infty}^2\right).
\end{align}
This implies \eqref{z25}. 
\end{proof}
\begin{lemma} \label{remindL1/2} We denote 
\begin{equation}
Ju(x):=	(L_{y}^{\frac{1}{2}}u(x)\vert_{y=x}-	L^{\frac{1}{2}}u(x)).
\end{equation}
The following inequalities hold
\begin{align}\label{z54a}
&	|J(x)|\lesssim \int_{\mathbb{R}^d}\frac{|u(x+z)-u(x)|}{|z|^d(|z|+1)}dz,
\end{align}
\begin{align}\label{z54}
||J||_{\dot C^{\alpha_1}}\lesssim ||u||_{ C^{\alpha_1+\kappa}},
\end{align}
for any $\alpha_1\in (0,1)$ and  $\kappa\in (0,1-\alpha_1)$. Moreover, 
\begin{align}\label{z0}
||L^{\frac{1}{2}}u||_{L^\infty}\lesssim ||u||_{\dot C^\alpha}^{\frac{\alpha}{2-\alpha}}||u||_{\dot C^{1+\frac{\alpha}{2}}}^{\frac{2(1-\alpha)}{2-\alpha}}+ ||u||_{L^\infty}.
\end{align}
\end{lemma}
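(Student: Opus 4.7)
The three claims rest on the common representation
\[
	J(x) = c_0 \int_0^\infty\!\!\!\int_{\mathbb{R}^d} \bigl[G_{A(x)}(z,s)-H_L(x,x+z,s)\bigr]\bigl(u(x)-u(x+z)\bigr)\,dz\,s^{-3/2}\,ds,
\]
and on the pointwise bound $|G_{A(x)}(z,s)-H_L(x,x+z,s)|\lesssim s^{-(d-1)/2}(1+s)^{-1/2}e^{-c|z|^2/s}$. The latter follows by inserting $G_{A(x+z)}$ between $G_{A(x)}$ and $H_L$: Lemma \ref{heatke} controls $H_L(x,x+z,s)-G_{A(x+z)}(z,s)$, and the Lipschitz dependence of $G_{A(y)}$ on $y$ (a direct computation from the explicit formula for $G$, using that $A$, $A^{-1}$ and $\nabla A$ are bounded) together with $|A(x+z)-A(x)|\lesssim|z|$ handles $G_{A(x+z)}-G_{A(x)}$, after the factor $|z|/\sqrt{s}$ is absorbed into the Gaussian. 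Fubini and an elementary evaluation of the $s$-integral then yield the convolution weight $|z|^{-d}(1+|z|)^{-1}$ against $|u(x+z)-u(x)|$, giving \eqref{z54a}.

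For \eqref{z54} I decompose $\delta_h J(x)=J(x+h)-J(x)$ into two pieces with common kernel $K:=G_{A(\cdot)}-H_L$:
\begin{align*}
\delta_h J(x) &= c_0 \int_0^\infty\!\!\!\int K(x,z,s)\bigl[\delta_h u(x)-\delta_h u(x+z)\bigr]\,dz\,s^{-3/2}\,ds \\
&\quad + c_0 \int_0^\infty\!\!\!\int \bigl[K(x+h,z,s)-K(x,z,s)\bigr]\bigl(u(x+h)-u(x+h+z)\bigr)\,dz\,s^{-3/2}\,ds.
\end{align*}
The first piece is controlled by the kernel bound from (1) applied to $\delta_h u$, using $|\delta_h u(x+z)-\delta_h u(x)|\lesssim\|u\|_{\dot C^{\alpha_1+\kappa}}\min(|z|,|h|)^{\alpha_1+\kappa}$; splitting the $z$-integral at $|z|=|h|$ and $|z|=1$ produces $|h|^{\alpha_1+\kappa}\log(2/|h|)\|u\|_{C^{\alpha_1+\kappa}}$, which is absorbed into $|h|^{\alpha_1}\|u\|_{C^{\alpha_1+\kappa}}$ because $\kappa>0$. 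For the second piece I split the $s$-integral at $s=|h|^2$: for $s\le|h|^2$ I estimate $K(x+h,z,s)$ and $K(x,z,s)$ separately by the pointwise bound from (1); for $s\ge|h|^2$ I use the diagonal Lipschitz estimate $|K(x+h,z,s)-K(x,z,s)|\lesssim|h|\,\log(2+s)\,s^{-d/2}e^{-c|z|^2/s}$, obtained from \eqref{z19a} for $H_L$ and direct differentiation for $G_{A(\cdot)}$; pairing with $|u(x+h)-u(x+h+z)|\lesssim\|u\|_{C^{\alpha_1+\kappa}}\min(|z|,1)^{\alpha_1+\kappa}$, each regime yields a contribution $\lesssim|h|^{\alpha_1+\kappa}\|u\|_{C^{\alpha_1+\kappa}}$ after the $z$- and $s$-integrals.

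For \eqref{z0} I exploit the symmetry $G_{A(x)}(-z,s)=G_{A(x)}(z,s)$ to rewrite
\[
\int G_{A(x)}(z,s)(u(x)-u(x+z))\,dz = -\tfrac{1}{2}\int G_{A(x)}(z,s)\bigl[u(x+z)+u(x-z)-2u(x)\bigr]\,dz,
\]
bounded by $\|u\|_{\dot C^{1+\alpha/2}}\,s^{(2+\alpha)/4}$ via the second-order Taylor remainder; the companion error from $H_L-G_{A(x)}$ is bounded by $\|u\|_{\dot C^\alpha}\,s^{(1+\alpha)/2}$ via (1) and $|u(x)-u(x+z)|\le\|u\|_{\dot C^\alpha}|z|^\alpha$. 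Splitting the outer $s$-integral at $\tau\in(0,1]$, with these two estimates on $(0,\tau]$ and the crude bound $\int H_L(x,x+z,s)|z|^\alpha dz\lesssim s^{\alpha/2}$ on $(\tau,\infty)$, I obtain
\[
\|L^{1/2}u\|_{L^\infty}\lesssim \|u\|_{\dot C^{1+\alpha/2}}\tau^{\alpha/4} + \|u\|_{\dot C^\alpha}\tau^{(\alpha-1)/2}.
\]
The choice $\tau=(\|u\|_{\dot C^\alpha}/\|u\|_{\dot C^{1+\alpha/2}})^{4/(2-\alpha)}$ balances the two terms and reproduces $\|u\|_{\dot C^\alpha}^{\alpha/(2-\alpha)}\|u\|_{\dot C^{1+\alpha/2}}^{2(1-\alpha)/(2-\alpha)}$ whenever $\tau\le 1$; in the opposite regime ($\|u\|_{\dot C^\alpha}\ge \|u\|_{\dot C^{1+\alpha/2}}$) I take $\tau=1$, note that this regime condition forces $\|u\|_{\dot C^{1+\alpha/2}}$ to be dominated by the same product, and use the elementary Young-type interpolation $\|u\|_{\dot C^\alpha}\lesssim\|u\|_{L^\infty}+\|u\|_{\dot C^{1+\alpha/2}}$ to absorb the residue into $\|u\|_{L^\infty}$.

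The delicate point is the Gaussian cancellation in \eqref{z0}: using only $|u(x)-u(x+z)|\lesssim\|u\|_{\dot C^\alpha}|z|^\alpha$ would give an $s$-integral that diverges at $s=0$, so the symmetry that promotes the first difference into a second difference is essential and the resulting $\|u\|_{\dot C^{1+\alpha/2}}$-cost must be correctly balanced by the scale split. The analogous subtlety in \eqref{z54} is the logarithmic loss from the annulus $|h|\le|z|\le 1$, compensated by the strict gain $\kappa>0$.
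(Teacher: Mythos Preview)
Your argument is correct. For \eqref{z54a} and \eqref{z0} you proceed exactly as the paper does: the same kernel representation of $J$, the same pointwise bound on $J_0=G_{A(x)}-H_L$ obtained by inserting $G_{A(x+z)}$ and invoking Lemma~\ref{heatke}, and (for \eqref{z0}) the same use of the evenness of $G_{A(x)}$ in $z$ to upgrade the first difference to a second one; the paper merely compresses the $\tau$-optimization into the single line ``So, we obtain \eqref{z54a} and \eqref{z0}.''

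For \eqref{z54} the routes genuinely differ. The paper applies the H\"older product rule directly under the integral,
\[
\|J\|_{\dot C^{\alpha_1}}\lesssim \int_0^\infty\!\!\int_{\mathbb R^d}\Bigl(\|J_0(z,\cdot,s)\|_{\dot C^{\alpha_1}}\|u-u(\cdot+z)\|_{L^\infty}+\|J_0(z,\cdot,s)\|_{L^\infty}\|u-u(\cdot+z)\|_{\dot C^{\alpha_1}}\Bigr)\,dz\,s^{-3/2}\,ds,
\]
and then interpolates $\|J_0(z,\cdot,s)\|_{\dot C^{\alpha_1}}\le\|J_0\|_{L^\infty}^{1-\alpha_1}\|\nabla_xJ_0\|_{L^\infty}^{\alpha_1}$, using the gradient bound \eqref{z55b} (which is exactly your ``diagonal Lipschitz estimate'' on $K$). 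You instead expand $\delta_hJ$ explicitly into a piece with $\delta_h$ falling on $u$ and a piece with $\delta_h$ falling on the kernel, and treat the latter by splitting the $s$-integral at $|h|^2$. Both methods rely on the same two kernel estimates (\eqref{z55a}, \eqref{z55b}) and end at the same place; the paper's version is more compact and avoids your harmless logarithmic loss, while yours is more elementary in that it never invokes the H\"older product rule or norm interpolation.
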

\begin{proof}
We have 
\begin{equation}
J(x)=
c_0\int_{0}^{\infty}\int_{\mathbb{R}^d}J_0(z,x,s)(u(x)-u(x+z))dzs^{-\frac{3}{2}}ds
\end{equation}
with 
\begin{equation}
	J_0(z,x,s):=G_{A(x)}(z,s)-H_L(x,x+z,s).
\end{equation}
In view of \eqref{z17c1} and  \eqref{z19a}, we have  for any $x,z\in \mathbb{R}^d$,
\begin{align}\label{z55a}
\left|J_0(z,x,s)\right|&\lesssim |G_{A(x+z)}(z,s)-G_{A(x)}(z,s)|+ \frac{\exp(-c_0\frac{|z|^2}{s})}{s^{\frac{d-1}{2}}(1+s)^{\frac{1}{2}}}\lesssim \frac{\min\{s,1\}^{\frac{1}{2}}\exp(-c_0\frac{|z|^2}{s})}{s^{\frac{d}{2}}}, 
\end{align}
and 
\begin{align}\label{z55b}
\left|\na_xJ_0(z,x,s)\right|&\leq \left|\na_x(H_L(x,x+z,s))\right|+\left|\na_x\left(G_{A(x)}(z,s)\right)\right|\lesssim\frac{\log(2+s)}{s^{\frac{d}{2}}}\exp(-c_0\frac{|z|^2}{s}).\end{align}
So, we obtain \eqref{z54a} and  \eqref{z0}. \\
Now, we prove  \eqref{z54}.   By an interpolation inequality, we have 
\begin{align}\nonumber
||J||_{\dot C^{\alpha_1}}&\lesssim  \int_{0}^{\infty}\int_{\mathbb{R}^d}\left(||J_0(z,.,s)||_{\dot C^{\alpha_1}}||u(.)-u(.+z)||_{L^\infty}+||J_0(z,.,s)||_{L^\infty}||u(.)-u(.+z)||_{\dot C^{\alpha_1}}\right)dzs^{-\frac{3}{2}}ds\\&\nonumber\lesssim \int_{0}^{\infty}\int_{\mathbb{R}^d}\left(||J_0(z,.,s)||_{L^\infty}^{1-\alpha_1}||\na J_0(z,.,s)||_{L^\infty}^{\alpha_1}\min\{|z|,1\}^{\alpha_1+\kappa}\right.\\&\quad\quad\quad\quad\quad\quad\quad\quad\quad\left.+||J_0(z,.,s)||_{L^\infty}\min\{|z|,1\}^\kappa\right)dzs^{-\frac{3}{2}}ds||u||_{ C^{\alpha_1+\kappa}},
\end{align}
for any $\kappa\in (0,1-\alpha_1)$.\\
Combining this with  \eqref{z55a} and \eqref{z55b}, we obtain 
\begin{align}\nonumber
||J||_{\dot C^{\alpha_1}}&\lesssim \int_{0}^{\infty}\int_{\mathbb{R}^d}\left(\min\{s,1\}^{\frac{1-\alpha_1}{2}}\log(2+s)^{\alpha_1}\min\{|z|,1\}^{\alpha_1+\kappa}\right.\\&\nonumber\quad\quad\quad\quad\quad\quad\quad\quad\left.+\min\{s,1\}^{\frac{1}{2}}\min\{|z|,1\}^\kappa\right)\exp(-c_0\frac{|z|^2}{s})dzs^{-\frac{3+d}{2}}ds||u||_{\dot C^{\alpha_1+\kappa}}\\&\lesssim
\int_{0}^{\infty}\min\{s,1\}^{\frac{1+\kappa}{2}}\log(2+s)^{\alpha_1}s^{-\frac{3}{2}}ds||u||_{ C^{\alpha_1+\kappa}}\sim ||u||_{ C^{\alpha_1+\kappa}}.
\end{align}
Here we have used the fact that  
\begin{align*}
\int_{\mathbb{R}^d}|z|^\beta	\exp(-c_0\frac{|z|^2}{s})dz=s^{\frac{d+\beta}{2}}~~\forall\beta>-d.
\end{align*}
This implies \eqref{z54}.
\end{proof}
\begin{lemma}\label{geoper} Let  $D$ be defined by \eqref{Ddefi}.  The following inequalities hold 
\begin{align}\label{z1}
v(x)L^{\frac{1}{2}}v(x)-\fr12 L^{\frac{1}{2}}(v^2)(x)\geq c D(v)(x),
\end{align}
and 
\begin{align}
\left|	(\delta_h	L^{\frac{1}{2}}v)(x)-	L^{\frac{1}{2}}(\delta_hv)(x)\right|\lesssim |h|^{1-\alpha}\left(||u||_{C^{1-\alpha+\kappa}}||u||_{C^{1-\alpha-\kappa}}\right)^{\frac{1}{2}} +|h|\left(||u||_{C^{1+\kappa}}||u||_{C^{1-\kappa}}\right)^{\frac{1}{2}}+ |h|^{1-\alpha} ||v||_{L^1},\label{z2b}
\end{align}
for any $|h|\leq 1$ and   $0<\kappa\leq \alpha\leq \frac{1}{4}$. In particular, 
\begin{align}
\left|	(\delta_h	L^{\frac{1}{2}}v)(x)-	L^{\frac{1}{2}}(\delta_hv)(x)\right|\lesssim |h|^{1-\alpha} \left( ||u||_{\dot C^\alpha}^{\frac{\alpha}{2-\alpha}}||u||_{\dot  C^{1+\frac{\alpha}{2}}}^{\frac{2(1-\alpha)}{2-\alpha}}+||u||_{L^1}\right),\label{z2}
\end{align} 
for any $\alpha\in (0,\frac{1}{4}]$.
\end{lemma}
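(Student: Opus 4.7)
\textbf{Proof plan for Lemma \ref{geoper}.} I will treat the two inequalities separately.

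For \eqref{z1}, I substitute the heat-semigroup formula \eqref{z52} for $L^{1/2}v$ and $L^{1/2}(v^2)$ and use the pointwise identity
\[
v(x)\bigl(v(x)-v(x+z)\bigr)-\tfrac12\bigl(v^2(x)-v^2(x+z)\bigr) = \tfrac12\bigl(v(x)-v(x+z)\bigr)^2
\]
to rewrite the left-hand side as
\[
v(x)L^{1/2}v(x)-\tfrac12 L^{1/2}(v^2)(x) = \tfrac{c_0}{2}\int_0^\infty\!\int_{\mathbb{R}^d} H_L(x,x+z,s)\bigl(v(x)-v(x+z)\bigr)^2 dz\, s^{-3/2}ds.
\]
The Gaussian lower bound \eqref{z17a} together with the elementary calculation $\int_0^\infty s^{-(d+3)/2}\exp(-c|z|^2/s)\,ds\sim |z|^{-(d+1)}$ then yields the required pointwise estimate by $cD(v)(x)$.

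For the commutator estimate \eqref{z2b} the starting point is the exact identity (obtained by writing $L^{1/2}v(x+h)$ with the kernel $H_L(x+h,x+h+z,s)$, $L^{1/2}v(x)$ and $L^{1/2}\delta_h v(x)$ with $H_L(x,x+z,s)$, and subtracting)
\[
\bigl(\delta_h L^{1/2}v - L^{1/2}\delta_h v\bigr)(x) = c_0\!\int_0^\infty\!\!\int_{\mathbb{R}^d}\!\bigl[H_L(x+h,x+h+z,s)-H_L(x,x+z,s)\bigr](v(x+h)-v(x+h+z))\,dz\,s^{-3/2}ds.
\]
The two ingredients for estimating this are the diagonal-derivative bounds \eqref{z19a}--\eqref{z19b}, which control the kernel difference by
\[
|H_L(x+h,x+h+z,s)-H_L(x,x+z,s)|\lesssim |h|\,\frac{\log(2+s)}{s^{d/2}}e^{-c|z|^2/s},
\]
with a finite-difference refinement in $h$ from \eqref{z19b} for small $s$; and the mass-conservation cancellation $\int[H_L(x+h,x+h+z,s)-H_L(x,x+z,s)]\,dz=0$, which permits replacing $v(x+h+z)$ by $v(x+h+z)-v(x+h)$ at no cost.

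I will split the $(z,s)$-integral into three regions according to the relative sizes of $|z|$, $|h|$ and $\sqrt{s}$. In the short-time regime $\sqrt{s}\lesssim |h|$, the Holder bound $|v(x+h)-v(x+h+z)|\lesssim \|v\|_{C^{1-\alpha\pm\kappa}}|z|^{1-\alpha\pm\kappa}$ combined with the interpolation \eqref{interdeltagamma} produces the $|h|^{1-\alpha}(\|v\|_{C^{1-\alpha+\kappa}}\|v\|_{C^{1-\alpha-\kappa}})^{1/2}$ term. In the intermediate regime $|h|\lesssim\sqrt{s}\lesssim 1$, the kernel difference delivers a clean factor of $|h|$, and a weighted $|z|$-integration against Holder regularity of level $1\pm\kappa$ produces the $|h|(\|v\|_{C^{1+\kappa}}\|v\|_{C^{1-\kappa}})^{1/2}$ term. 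In the long-time regime $\sqrt{s}\gtrsim 1$, the rapid Gaussian decay and the $s^{-3/2}$ weight make the $L^1$ norm of $v$ controllable, giving the $|h|^{1-\alpha}\|v\|_{L^1}$ contribution. Finally \eqref{z2} follows from \eqref{z2b} by taking $\kappa=\alpha/2$ and interpolating the $C^{1\pm\kappa}$ and $C^{1-\alpha\pm\kappa}$ norms of $v$ between $C^\alpha$ and $C^{1+\alpha/2}$ via \eqref{interdeltagamma}, which yields the stated geometric mean $\|v\|_{\dot C^\alpha}^{\alpha/(2-\alpha)}\|v\|_{\dot C^{1+\alpha/2}}^{2(1-\alpha)/(2-\alpha)}$.

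The principal technical difficulty is the bookkeeping of logarithmic losses from \eqref{z19a}--\eqref{z19b} through the $s$-integration, and the choice of decomposition boundaries so that neither the $s\to 0$ singularity nor the $s\to\infty$ tail costs any power of $|h|$. This is exactly what forces the particular Holder exponents $1\pm\kappa$ and $1-\alpha\pm\kappa$ appearing in the statement.
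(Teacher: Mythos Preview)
Your argument for \eqref{z1} coincides with the paper's.

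For \eqref{z2b} the paper takes a different route: it does not bound the kernel difference pointwise but rewrites the commutator through a Duhamel identity (equation \eqref{delhL} in the paper),
\[
(\delta_h L^{1/2}v - L^{1/2}\delta_h v)(x)=c\int_0^\infty\!\!\int_0^t e^{-(t-s)L}\bigl(\operatorname{div}(\delta_h A)\nabla\bigr)\tau_h(e^{-sL}v)\,ds\,t^{-3/2}dt,
\]
and then applies the smoothing bounds \eqref{z7} and \eqref{smooeff}. The two terms of \eqref{z2b} arise from the product estimate $\|(\delta_h A)\,g\|_{C^\alpha}\lesssim |h|^{1-\alpha}\|g\|_{L^\infty}+|h|\,\|g\|_{C^\alpha}$ applied to $g=\nabla\tau_h e^{-sL}v$: the powers $|h|^{1-\alpha}$ and $|h|$ come directly from the Lipschitz coefficient $A$, and the $s$-integration is then handled by \eqref{smooeff}.

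Your direct-kernel plan has a genuine gap. The diagonal bound \eqref{z19a} gives only
\[
|H_L(x+h,x+h+z,s)-H_L(x,x+z,s)|\lesssim |h|\,\frac{\log(2+s)}{s^{d/2}}\,e^{-c|z|^2/s},
\]
with no additional smallness as $s\to 0$. Combined with the H\"older increment $|v(x+h)-v(x+h+z)|\lesssim|z|^\beta\|v\|_{\dot C^\beta}$ this yields $\int_0^{\cdot}|h|\,s^{(\beta-3)/2}\,ds$, which converges at $s=0$ only for $\beta>1$. Hence the short-time regime $s\lesssim|h|^2$ \emph{cannot} deliver a contribution controlled by $\|v\|_{C^{1-\alpha\pm\kappa}}$ as you assert; any convergent bound there necessarily invokes $\|v\|_{C^{1+\kappa}}$. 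Reversing the roles of your short and intermediate regimes gives a convergent estimate, but only of the form $|h|\,\|v\|_{C^{1+\kappa}}+|h|\,\|v\|_{L^1}$, which is strictly weaker than \eqref{z2b} and, crucially, does not interpolate to the product $\|v\|_{\dot C^\alpha}^{\alpha/(2-\alpha)}\|v\|_{\dot C^{1+\alpha/2}}^{2(1-\alpha)/(2-\alpha)}$ in \eqref{z2} (taking $\kappa=\alpha/2$ one would need $|h|^{2-\alpha}\|v\|_{C^{1+\alpha/2}}\le\|v\|_{C^\alpha}$, which is false). The Duhamel representation is what makes the sharper bound accessible: it places the $h$-gain on $\delta_h A$, measured in $C^\alpha$ and hence worth $|h|^{1-\alpha}$, rather than on the heat kernel, and it splits the time singularity between the outer factor $e^{-(t-s)L}\operatorname{div}$ and the smoothing of $\nabla e^{-sL}v$.
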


\begin{proof} 1)
Because ${{\int_{\mathbb{R}^d}}}H_L(x,y,t)dy=1$ for any $t>0$ and 	\begin{align}
\int_{0}^{\infty}H_L(x,y,t)t^{-\fr32}dt {{\overset{\eqref{z17a}}\sim }} \frac{1}{|x-y|^{d+1}}~~\forall x,y\in \mathbb{R}^d~x\not= y,
\end{align}
we have 
\begin{align}
v(x)L^{\fr12}v(x)-\fr12 L^{\fr12}(v^2)(x)=c\int_{0}^{\infty}\int_{\mathbb{R}^d} H_L(x,y,t)(v(x)-v(y))^2t^{-\fr32}dydt \geq c' D(v)(x).
\la{nonm}
\end{align}
This implies \eqref{z1}. \\
2) Now we prove \eqref{z2b}. Note that  for any $x,h\in \mathbb{R}^d$
\begin{align}
I(x):=(\delta_h	L^{\frac{1}{2}}v)(x)-	L^{\frac{1}{2}}(\delta_hv)(x)=c\int_{0}^{\infty}\int_{0}^{t}e^{-(t-s)L}\left(\operatorname{div}(\delta_h A)\na\right) \tau_h (e^{-sL}v)ds t^{-\fr32}dt,
\la{delhL}
\end{align}
where 
\be
\tau_h f (x) =  f(x+h).
\la{tauh}
\ee
Using \eqref{z7} and the fact that  \begin{align}
&	\int_{s}^{\infty} \min\{t-s,1\}^{-\frac{1-\alpha}{2}} t^{-\fr32}dt\lesssim \min\{s,1\}^{-\frac{1-\alpha}{2}}s^{-\frac{1}{2}}, \\&
||(\delta_hA)\na \tau_h(e^{-sL}v)||_{C^{\alpha}}\lesssim |h|^{1-\alpha} ||\na e^{-sL}v||_{L^\infty}+|h| ||\na e^{-sL}v||_{C^{\alpha}},\\&
||\na e^{sL}v||_{C^\alpha(\mathbb{R}^d)}\lesssim s^{-\frac{d}{2}}||v||_{L^1}~~\forall s\geq 1,
\end{align}  we get 
\begin{align}\nonumber
||I||_{L^\infty}&\lesssim \int_{0}^{\infty}\int_{0}^{t}  \min\{t-s,1\}^{-\frac{1-\alpha}{2}} ||(\delta_hA)\na \tau_h(e^{-sL}v)||_{C^{\alpha}}ds t^{-\fr32}dt
\\&\lesssim \nonumber \int_{0}^{\infty}\min\{s,1\}^{-\frac{1-\alpha}{2}}s^{-\frac{1}{2}}\left(|h|^{1-\alpha} ||\na e^{-sL}v||_{L^\infty}+|h| ||\na e^{sL}u||_{C^{\alpha}}\right)ds\\&\nonumber
\lesssim |h|^{1-\alpha} ||v||_{L^1}+ \int_{0}^{1}s^{-1+\frac{\alpha}{2}}\left(|h|^{1-\alpha} ||\na e^{-sL}v||_{L^\infty}+|h| ||\na e^{-sL}v||_{C^{\alpha}}\right)ds
\\&
\nonumber	\lesssim |h|^{1-\alpha} ||v||_{L^1}+|h|^{1-\alpha}\left(\sup_{s\in [0,1]}s^{\frac{\alpha-\kappa}{2}}||\na e^{-sL}v||_{L^\infty}\right)^{\frac{1}{2}} \left(\sup_{s\in [0,1]}s^{\frac{\alpha+\kappa}{2}}||\na e^{-sL}v||_{L^\infty}\right)^{\frac{1}{2}}\nonumber
\\&
\quad\quad +|h|\left(\sup_{s\in [0,1]}s^{\frac{\alpha-\kappa}{2}}||\na e^{-sL}v||_{C^{\alpha}}\right)^{\frac{1}{2}} \left(\sup_{s\in [0,1]}s^{\frac{\alpha+\kappa}{2}}||\na e^{-sL}v||_{C^{\alpha}}\right)^{\frac{1}{2}}.
\label{z67}
\end{align}This implies \eqref{z2b} by using \eqref{smooeff}. Then, \eqref{z2} follows from \eqref{z2b} and interpolation inequality. 
\end{proof}

\vspace{.5in}
{\bf{Data availability statement.}} This research does not have any associated data.\\

 {\bf {Conflict of Interest.}} The authors declare that there are no conflicts of interest.\\

{\bf{Acknowledgments.}} The work of PC was partially supported by NSF grant DMS-2106528 and by
a Simons Collaboration Grant 601960. The work of MI was partially supported by NSF grant DMS-2204614. Q.H.N.  is supported by the Academy of Mathematics and Systems Science, Chinese Academy of Sciences startup fund; CAS Project for Young Scientists in Basic Research, Grant No. YSBR-031;  and the National Natural Science Foundation of China (No. 12288201);  and the National Key R$\&$D Program of China under grant 2021YFA1000800.

\end{document}